\pgfplotsset{compat=1.8}
\def\revddots{\mathinner{\mkern1mu\raise\p@
\vbox{\kern7\p@\hbox{.}}\mkern2mu
\raise4\p@\hbox{.}\mkern1mu\raise7\p@\hbox{.}\mkern1mu}}
\g@addto@macro{\endabstract}{\@setabstract}
\newcommand{\authorfootnotes}{\renewcommand\thefootnote{\@fnsymbol\c@footnote}}%
\theoremstyle{plain}
\newtheorem{thm}{Theorem}[section]
\newtheorem{lem}[thm]{Lemma}
\newtheorem{prop}[thm]{Proposition}
\newtheorem{cor}[thm]{Corollary}
\theoremstyle{definition}
\newtheorem{exmp}[thm]{Example}
\theoremstyle{remark}
\newtheorem*{rem}{Remark}
\newtheorem*{claim}{Claim}
\newcommand{\R}{\mathbb{R}}
\newcommand{\C}{\mathbb{C}}
\newcommand{\N}{\mathbb{N}}
\newcommand{\Z}{\mathbb{Z}}
\newcommand{\Hom}{\mathrm{Hom}}
\newcommand{\deri}{\mathrm{d}}
\title[Generic representations of metaplectic groups]{Generic representations of metaplectic groups and their theta lifts\thanks{MSC:22E50 \quad Keywords: metaplectic  $p$--adic groups; generic representations; standard module conjecture; theta correspondence}} 
\author{Petar Baki\'c and Marcela Hanzer}
\address{Department of Mathematics, University of Zagreb, Croatia}
\email{pbakic@math.hr, hanmar@math.hr}
\subjclass[2010]{Primary 22E50, Secondary 11F27}
\date{}
\begin{document}

\maketitle

\begin{center}
\textsc{Petar Baki\'{c} and Marcela Hanzer}
\end{center}
%
%
%

\begin{abstract}In this paper we give the description of generic representations of metaplectic groups over p-adic fields in terms of their Langlands parameters and calculate their theta lifts on all levels for any tower of odd orthogonal groups.
We also describe precisely all the occurrences of the failure of the standard module conjecture for metaplectic groups.
\end{abstract}
\section{Introduction}
In this paper, we investigate two important problems concerning the representation theory of the metaplectic group, defined over a non-Archimedean local field $F$ of characteristic zero. To be precise, let $W$ be a symplectic vector space of (even) dimension $2n$ over $F$. Let $Sp(2n,F)$ denote the corresponding symplectic group, i.e., the group of isometries of $W$. The metaplectic group $\widetilde{Sp(2n,F)}$ is the unique non-trivial two-fold central extension of $Sp(2n,F)$:
\[
1 \to \{\pm 1\} \to \widetilde{Sp(2n,F)} \to Sp(2n,F) \to 1.
\]
The purpose of this paper is to study the generic representations of $\widetilde{Sp(2n,F)}$. To define them, we fix a non-trivial additive character $\psi$ of $F$. After fixing a Borel subgroup $B(F)$ of $Sp(2n,F)$, we let $U(F)$ denote the unipotent radical of $B(F)$ and $\widetilde{U(F)}$ its preimage in $\widetilde{Sp(2n,F)}$. The character $\psi$ can be extended to a non-degenerate character of $U(F)$ (also denoted $\psi$) and further to $\widetilde{U(F)}$, which is isomorphic to $U(F) \times \{\pm 1\}$, by $\psi(z,\epsilon) = \epsilon\psi(z)$. We say that a smooth representation of $(\pi,V_\pi)$ of $\widetilde{Sp(2n,F)}$ is $\psi$-generic if there is a non-zero linear functional $\lambda$ on $V_\pi$ such that
\[
\lambda(\pi(z)v)=\psi(z)\lambda(v),\forall v\in V_{\pi},z\in \widetilde{U(F)}.
\]
Our goal is to answer some basic questions regarding the generic representations of the metaplectic group. More specifically, we 
\begin{enumerate}[(i)]
\item obtain a complete description of the irreducible generic representations of $\widetilde{Sp(2n,F)}$ in terms of their Langlands parameters 
\item compute the theta lifts of all the irreducible generic representations to any orthogonal tower on all levels.
\end{enumerate}

In the case of quasi-split classical groups, the first question is answered by the standard module conjecture (\cite{Muic_proof_Casselman_Shahidi}, \cite{Hanzer_injectivity}): the standard module of an irreducible generic representation is itself irreducible. Thus, the set of (equivalence classes of) generic irreducible representation corresponds to the set of those generic standard representations which are irreducible.

In case of the metaplectic group, the situation is more complicated because the standard module conjecture no longer holds---see Proposition \ref{cuspidal_reducibility} and Example \ref{primjer}. Our first main result provides explicit necessary and sufficient conditions for the Langlands quotient of a standard representation to be generic (cf. Theorem \ref{generic_reps_metaplectic}).

\begin{thm}
\label{tm1.1}
Let  $\delta_i,\;i=1,2,\ldots,k$ be irreducible unitarizable square integrable representations of $GL(n_i,F),$ $\sigma$ a tempered $\psi$--generic representation  of $\widetilde{Sp(2n_0,F)}$ and $\overrightarrow{s_0}=(s_1,s_2,\ldots,s_k)$ satisfy $s_1\ge  s_2\ge \cdots\ge s_k>0$ so that for $\overrightarrow{s}=\overrightarrow{s_0}$ the representation
\[\gamma_{\psi}^{-1}\delta_1\nu^{s_1}\times \gamma_{\psi}^{-1}\delta_2\nu^{s_2}\times \dotsm \times \gamma_{\psi}^{-1}\delta_k\nu^{s_k}\rtimes \sigma\]
 is standard. Then, the Langlands quotient $L(\gamma_{\psi}^{-1}\delta_1\nu^{s_1},dotsc,\gamma_{\psi}^{-1}\delta_k\nu^{s_k};\sigma)$ is $\psi$--generic if and only if for each $i,j\in\{1,2,\ldots,k\},\;i\neq j$
 \begin{equation}
\label{GL_conditions0}
 \delta_i\nu^{s_i}\times \delta_j\nu^{s_j} \text{ and } \delta_i\nu^{s_i}\times \widetilde{\delta_j}\nu^{-s_j}
 \end{equation}
 is irreducible and the following holds:
 \begin{itemize}
\item[(i)] if, for    $i\in\{1,2,\ldots,k\},$ $\delta_i\nu^{s_i}$ is not equal to $\delta([\nu^{1/2},\nu^{m+1/2}])$ for some $m\in \N_0,$ (actually for $2s_i-1$) then $\gamma_{\psi}^{-1}\delta_i\nu^{s_i}\rtimes \sigma$ is irreducible.
\item [(ii)] if, for  $i\in\{1,2,\ldots,k\},$ $\delta_i\nu^{s_i}=\delta([\nu^{1/2},\nu^{m+1/2}])$ for some $m\in \N_0,$ then, for each even $a$ such that $1\otimes S_a$ occurs in the L-parameter of $\sigma,$ we have $a\ge 4s_i.$
\end{itemize}
Note that because of \eqref{GL_conditions0}, there can be at most one occurrence of a representation of the form  $\delta([\nu^{1/2},\nu^{m+1/2}])$ among $\delta_1\nu^{s_1},\ldots,\delta_k\nu^{s_k}.$
\end{thm}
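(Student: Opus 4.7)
The plan is to reduce the theorem to a collection of corank-one statements, then combine them using Rodier's heredity and the uniqueness of the Whittaker model. Throughout, I would use that the standard module admits at most one $\psi$--generic irreducible subquotient, appearing with multiplicity one; the question is whether this generic constituent coincides with the Langlands quotient.

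First, I would prove necessity of the pairwise GL--GL irreducibility conditions \eqref{GL_conditions0}. If some product $\delta_i\nu^{s_i}\times\delta_j\nu^{s_j}$ or $\delta_i\nu^{s_i}\times\widetilde{\delta_j}\nu^{-s_j}$ is reducible, I would permute the inducing data (using contragredients for the second family, which enter via the long intertwining operator) to exhibit a proper subrepresentation of the standard module on which the Whittaker functional is supported. Then the unique generic constituent cannot be the Langlands quotient. This is essentially the Casselman--Shahidi/Mui\'c argument, adapted to the metaplectic cover through the genuine normalization $\gamma_\psi^{-1}$.

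Next, assuming \eqref{GL_conditions0}, I would reduce to the following corank-one statement: \emph{the Langlands quotient of the full standard module is $\psi$--generic if and only if for each $i$ the Langlands quotient of $\gamma_\psi^{-1}\delta_i\nu^{s_i}\rtimes\sigma$ is $\psi$--generic.} Sufficiency here comes from factoring the long intertwining operator into a composition of rank-one operators that, thanks to \eqref{GL_conditions0}, commute up to scalar isomorphism, so that the image of the full operator is determined by the images of the corank-one pieces. Necessity follows by computing the $\psi$--twisted Jacquet module along the maximal parabolic corresponding to the $i$-th factor. Once this reduction is in place, the two cases (i) and (ii) address the dichotomy for a single square-integrable $\delta\nu^s$: if $\delta\nu^s\neq\delta([\nu^{1/2},\nu^{m+1/2}])$, the standard module conjecture does hold for $\gamma_\psi^{-1}\delta\nu^s\rtimes\sigma$ and the Langlands quotient is generic precisely when this representation is irreducible, giving (i); if $\delta\nu^s=\delta([\nu^{1/2},\nu^{m+1/2}])$, the standard module conjecture fails, and I would invoke Proposition~\ref{cuspidal_reducibility} together with the description of tempered generic L-parameters of $\widetilde{Sp(2n,F)}$ to determine, in terms of the Jordan blocks $1\otimes S_a$ of $\sigma$, exactly when the Whittaker functional descends to the Langlands quotient; this yields the bound $a\geq 4s_i$.

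The main obstacle is the corank-one analysis in case (ii). Because the standard module conjecture fails there, irreducibility of the standard module is no longer the right criterion; one must instead track the Whittaker functional through the long intertwining operator
\[
M(s):\gamma_\psi^{-1}\delta([\nu^{1/2},\nu^{m+1/2}])\rtimes\sigma \longrightarrow \gamma_\psi^{-1}\delta([\nu^{-m-1/2},\nu^{-1/2}])\rtimes\sigma,
\]
decide whether it vanishes on the kernel (which is the Langlands quotient), and translate this analytic condition into the combinatorial bound $a\geq 4s_i$ using the L-parameter of $\sigma$. All other ingredients---the GL--GL permutation argument, the commutativity of rank-one intertwiners, and Rodier-style heredity---become essentially formal once the corank-one case (ii) is settled.
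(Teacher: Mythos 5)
Your outline captures the right high-level skeleton — factor the long intertwining operator, reduce to corank-one questions, handle the GL--GL constraints first, then distinguish between the Steinberg-twist case and the rest — and this does mirror the paper's structure. However, two substantive pieces are missing, and both concern the places where the metaplectic situation diverges from the classical one.

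In case (i), you assert without justification that ``the standard module conjecture does hold for $\gamma_\psi^{-1}\delta\nu^s\rtimes\sigma$'' when $\delta\nu^s$ is not of the form $\delta([\nu^{1/2},\nu^{m+1/2}])$. This is not a fact one may invoke: the entire point of the paper is that the standard module conjecture fails for $\widetilde{Sp(2n,F)}$, and establishing that $\gamma_\psi^{-1}\delta_i\nu^{s_i}\rtimes\sigma$ is \emph{forced} to be irreducible when the Langlands quotient is generic is itself the heart of part (i). The paper obtains it by a chain of transfers: the rank-one local coefficient formula \eqref{C_factor_gamma_factors}, the Gan--Savin identity $\gamma(\sigma\times\tau,s,\psi)=\gamma(\Theta_\psi(\sigma)\times\tau,s,\psi)$ (Corollary \ref{transfer_gamma_factors}), the standard module conjecture for $SO(2n+1)$, and then Kudla's filtration (Lemma \ref{cor5.3}) to pull irreducibility back across the theta correspondence. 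Without these ingredients the claim hangs in the air.

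In case (ii), appealing to Proposition \ref{cuspidal_reducibility} is not the right move: that proposition is stated for \emph{cuspidal} $\sigma$, while the theorem allows arbitrary tempered $\sigma$. What produces the bound $a\geq 4s_i$ in the paper is the explicit cancellation in the formula \eqref{C_factor_gamma_factors}: when $\delta_j\nu^{s_i'}=\delta([\nu^{1/2},\nu^{2s_i'-1/2}])$, the factor $1/\gamma(\delta_j\nu^{s_i'},s+\tfrac12,\psi)$ acquires a simple zero at $s=0$, the factor $\gamma(\delta_j\nu^{s_i'},sym^2,2s,\psi)$ acquires a simple pole, and holomorphy of the local coefficient then forces $\gamma(\sigma\times\delta_j\nu^{s_i'},s,\psi)$ to be holomorphic; this is then unpacked via multiplicativity of $\gamma$-factors over the tempered support of $\Theta_\psi(\sigma)$, not by reduction to the cuspidal case. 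You also have the topology of the intertwining operator slightly backwards at one point: the image of $A_{w_0}$, not its kernel, is the Langlands quotient, and what one must decide is whether the Whittaker functional survives on that image. So the right criterion is the holomorphy of the local coefficient (Corollary \ref{generic_Langlands_quotient}), which your proposal does not invoke but which carries essentially all the weight in the paper's argument.
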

The main ingredient used in the proof of Theorem \ref{tm1.1} is the analysis of the local coefficients $C_\psi^{\widetilde{Sp(2n,F)}}(P,\overrightarrow{s}, (\otimes_{i=1}^r \tau_i)\otimes \sigma, w)$ as introduced by Shahidi (\cite{Shahidi_certain}, \cite{Shahidi_multiplicativity}, \cite{Sh2}) and adapted to the setting of metaplectic groups by Szpruch in \cite{Szpruch_PhD}. By analyzing the holomorphy of the appropriate local coefficient we are able to determine first the cuspidal reduciblities (Proposition \ref{cuspidal_reducibility}), and later on the conditions which appear in Theorem \ref{tm1.1}.

In the final stage of writing this paper we learned about the results of Atobe in \cite{Atobe_gen} concerning generic representations of metaplectic groups; specifically, we refer to Theorem 3.11 of \cite{Atobe_gen}. There, the author uses the results of Szpruch on the local coefficients for generic representations of metaplectic groups (\cite{Szpruch_PhD}), in the same way we do, to give a criterion of genericity of general Langlands quotient of a metaplectic group. This criterion of Atobe is given solely in terms of the L-function attached to the Langlands parameter of this Langlands quotient. In contrast, in the above theorem, we expressed this criterion in terms of the inducing representation of the standard Levi subgroup of metaplectic group which arises in the description of this Langlands quotient. This way of expressing this result is particularly well-suited for our application in determination of all  the theta lifts of our generic Langlands quotient.

Moreover, with our approach, we were able to provide the complete answer to the question of reducibility of generic standard representations with generic Langlands quotients (Theorem \ref{standard_module_reducibility}). In this way, we have strengthened the result of Atobe which gives only the sufficient condition (cf. Theorem 3.13. of \cite{Atobe_gen}).

The second part of this paper deals with the theta lifts of generic representations of the metaplectic group. Let us briefly recall the basic setting (for more details, see Section \ref{subs:thetaintro}). Let $V$ be a quadratic space of dimension $2r+1$ over $F$ (i.e., a space endowed with a non-degenerate $F$-bilinear form) and let $O(V)$ denote the corresponding orthogonal group. If $W_{2n}$ is a symplectic space of dimension $2n$, then $O(V)$ and $\widetilde{Sp(W_{2n})}$ form a dual pair inside $\widetilde{Sp(W_{2n(2r+1)})}$. Fixing a non-trivial additive character $\psi$ of $F$, we obtain the so-called Weil representation $\omega_{2n(2r+1),\psi}$ of the metaplectic group $\widetilde{Sp(W_{2n(2r+1)})}$. Restricting this representation to $O(V) \times \widetilde{Sp(W_{2n})}$ we obtain the Weil representation $\omega_{2r+1,2n,\psi}$ of this dual pair.

Now for any $\pi \in \text{Irr} \widetilde{Sp(W_{2n})}$ we may look at the maximal $\pi$-isotypic quotient of $\omega_{2r+1,2n,\psi}$. We denote it $\Theta(\pi,2r+1)$ and call it the full theta lift of $\pi$ to $V$. This representation, when non-zero, has a unique irreducible quotient, denoted $\theta(\pi,2r+1)$---the small theta lift of $\pi$. This basic fact, called the Howe duality conjecture, was first formulated by Howe \cite{Howe_theta_series}, proven by Waldspurger \cite{Waldspurger_howe_duality} (for odd residue characteristic) and by Gan and Takeda \cite{Gan_Takeda_proof_of_Howe} in general.

The Howe duality establishes a map $\pi \mapsto \theta(\pi)$ which is called the theta correspondence. It is an exceptionally useful tool in the representation theory of p-adic groups. However, its importance also stems from number-theoretic considerations, since the global variant of theta correspondence can be used for constructing automorphic representations. For this reason, theta correspondence has been an area of active research for the last forty years. The study of theta correspondence was initiated by Roger Howe \cite{Howe_theta_series,Howe_transcending} and continued by Kudla \cite{Kudla2,Kudla1}, Rallis \cite{rallis1984howe}, Kudla-Rallis \cite{KR1}, Moeglin-Vigneras-Waldspurger \cite{MVW_Howe}, Waldspurger \cite{Waldspurger_howe_duality} and many others. In recent years this topic has seen a major revival of interest, with many new developments and many old problems being resolved.
However, the two main problems concerning theta lifts still remain open: determining when $\Theta(\pi,2r+1)$ is non-zero and identifying $\theta(\pi,2r+1)$ explicitly. The second main contribution of this paper is the complete resolution of these problems in case when $\pi$ is a generic representation of the metaplectic group.

Our approach to these problems relies on a number of different techniques and results: the Jacquet module technique as utilized by Mui\'{c} \cite{muic2004howe,Muic_theta_discrete_Israel,muic2008theta}, the results of Atobe and Gan on the lifts of tempered representations \cite{Atobe_Gan}, the results of Gan and Savin \cite{Gan_Savin_Metaplectic_2012} on the theta lifts of representations of the metaplectic group, and the work of Mui\'{c} \cite{Muic_proof_Casselman_Shahidi} and the second author \cite{Hanzer_injectivity} on the standard module/generalized injectivity conjecture.

The question of occurrence is answered in Propositions \ref{l(pi)} and \ref{prop:l(pi),l(sigma)=0}. We have 
\begin{prop}
Let $\pi=L(\gamma_{\psi}^{-1}\delta_1\nu^{s_1},\gamma_{\psi}^{-1}\delta_2\nu^{s_2},\ldots,\gamma_{\psi}^{-1}\delta_k\nu^{s_k};\sigma)$ be a $\psi$--generic representation of $\widetilde{Sp(2n,F)}.$ Then we have one of the following
\begin{enumerate}
\item Assume that none of $\delta_1\nu^{s_1},\ldots,\delta_k\nu^{s_k}$ is of the form $\delta([\nu^{1/2},\nu^{2s_i-1/2}]).$ Then, $l(\pi)=l(\sigma).$
\item Assume that there exists (a unique!, cf.~Theorem  \ref{generic_reps_metaplectic}) $i\in \{1,2,\ldots,k\}$ such that $\delta_i\nu^{s_i}=\delta([\nu^{1/2},\nu^{2s_i-1/2}]).$ Then,
\begin{itemize}
\item If $l(\sigma)=2,$ then $s_i=\frac{1}{2}$ and $l(\pi)=l(\sigma)=2.$
\item If $l(\sigma)=0,$  then $l(\pi)=l(\sigma)=0$ unless $s_i=\frac{1}{2}.$ In that case, $l(\pi)=2$.
\end{itemize}
\end{enumerate}
\end{prop}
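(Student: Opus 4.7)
The plan is to combine three ingredients: (a) the explicit characterization of generic Langlands quotients in Theorem~\ref{tm1.1}, (b) the Jacquet-module analysis of Kudla's filtration of the Weil representation in the style of Mui\'{c}, and (c) the known first occurrence indices of tempered $\psi$-generic representations due to Atobe--Gan. The proof naturally splits along the two cases of the statement.

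In case (1), no $\delta_i\nu^{s_i}$ is of the form $\delta([\nu^{1/2},\nu^{2s_i-1/2}])$. For the inequality $l(\pi)\le l(\sigma)$, I would exploit that $\pi$ is the Langlands quotient of the standard module $\Pi=\gamma_{\psi}^{-1}\delta_1\nu^{s_1}\times\cdots\times\gamma_{\psi}^{-1}\delta_k\nu^{s_k}\rtimes\sigma$: since $\sigma$ has a non-zero theta lift at level $l(\sigma)$, compatibility of the Weil representation with parabolic induction via Kudla's filtration and induction in stages produces a non-zero lift of $\Pi$ at that level, which, together with Theorem~\ref{tm1.1} and multiplicativity of local $\gamma$-factors, descends to the Langlands quotient $\pi$. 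For the reverse inequality, I would assume $\Theta(\pi,2r+1)\ne 0$ for some $r<l(\sigma)$ and use Frobenius reciprocity to read off the resulting Jacquet module of $\pi$; the absence of the critical segment among the $\delta_i\nu^{s_i}$ then forces a non-vanishing tempered lift of $\sigma$ at a strictly smaller level, contradicting the definition of $l(\sigma)$.

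In case (2), a unique $i$ satisfies $\delta_i\nu^{s_i}=\delta([\nu^{1/2},\nu^{2s_i-1/2}])$, and this segment interacts non-trivially with the Weil representation via Rallis' tower property. If $l(\sigma)=2$, condition (ii) of Theorem~\ref{tm1.1} forces $a\ge 4s_i$ for every even $a$ appearing in the L-parameter of $\sigma$; since $l(\sigma)=2$ already implies that the smallest such $a$ equals $2$, we deduce $s_i=1/2$, and the Jacquet-module argument of case (1) then yields $l(\pi)=2$. If $l(\sigma)=0$ and $s_i>1/2$, the same argument applies unchanged and gives $l(\pi)=0$. The remaining sub-case $l(\sigma)=0$, $s_i=1/2$ is the unique instance in which $l(\pi)\ne l(\sigma)$: here $\gamma_{\psi}^{-1}\nu^{1/2}$ matches precisely the character appearing in the relevant piece of Kudla's filtration of the Weil representation, producing an extra rank-raising contribution of $2n_i=2$.

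The main obstacle will be precisely this last sub-case. Proving $l(\pi)=2$ requires simultaneously (i) ruling out a non-zero lift at level $0$, which demands a careful bookkeeping of all characters $\chi\otimes\tau$ appearing in the Jacquet modules of $\pi$ along the relevant maximal parabolic, combined with the genericity constraints on $\sigma$; and (ii) exhibiting an explicit non-zero lift at level $2$, most conveniently by producing a subquotient of $\Pi$ that matches a quotient of the Weil representation in that tower at the given level and then invoking Howe duality to transfer the non-vanishing to $\pi$.
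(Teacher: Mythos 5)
Your outline correctly identifies the main tools (Kudla's filtration, multiplicativity, the genericity criterion, Atobe--Gan for the tempered part), but there is a genuine gap in the logical structure, and the route chosen for one half of the argument would not close.

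The crucial technical input --- Lemma~\ref{cor5.3}, i.e.~Corollary~5.3 of \cite{Atobe_Gan} --- gives an \emph{epimorphism} $\delta_1\nu^{s_1}\times\cdots\times\delta_k\nu^{s_k}\rtimes\Theta_l(\sigma)\twoheadrightarrow\Theta_l(\pi)$. This only propagates \emph{vanishing}: if $\Theta_l(\sigma)=0$ then $\Theta_l(\pi)=0$. It does not propagate non-vanishing, because the surjection could be onto zero. Your first-half argument (\emph{``produces a non-zero lift of $\Pi$ at that level, which \dots descends to the Langlands quotient $\pi$''}) asks precisely for the forbidden direction: knowing $\Theta_l(\Pi)\neq 0$ (or $\Theta_l(\sigma)\neq 0$) gives no information about $\Theta_l(\pi)$, since $\pi$ is a quotient of $\Pi$, not a subrepresentation, and $\Theta(\cdot,m)$ is not compatible with passage to quotients in the direction you need. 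The paper sidesteps this entirely by proving two \emph{vanishing} statements --- $\Theta_{-l(\sigma)}(\pi)=0$ on $\sigma$'s going-up tower and $\Theta_{l(\sigma)+2}(\pi)=0$ on $\sigma$'s going-down tower --- and then invoking the conservation relation to pin $l(\pi)$ down from both sides. This both resolves the directional confusion in your draft (exhibiting a non-zero lift at level $l(\sigma)$ would in any case yield $l(\pi)\ge l(\sigma)$, not $\le$) and avoids proving any non-vanishing at all for the first case.

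The second gap concerns the critical sub-case $l(\sigma)=0$, $s_i=\tfrac12$, where the first occurrence genuinely jumps by two. Your sketch (``careful bookkeeping of characters in the Jacquet modules'' and ``exhibiting an explicit non-zero lift at level $2$'') is under-specified in exactly the spot where the actual work lies. The paper's proof (Proposition~\ref{prop:l(pi),l(sigma)=0}) shows $l(\pi)\le 2$ via Lemma~\ref{cor5.3} with no exceptions, and then rules out $l(\pi)=0$ by a specific chain: Lemma~\ref{cor5.3} on the non-split tower, the irreducibility of $\nu^{1/2}\rtimes\Theta_{-2}(\sigma)$ (Lemma~\ref{irreducibility_1_2}, which requires an argument with component-group characters from Moeglin and Atobe--Gan), and a comparison with Proposition~\ref{prop_reducibility_1_2}, which says $\gamma_\psi^{-1}\nu^{1/2}\rtimes\sigma$ is reducible when $l(\sigma)=0$; the contradiction is that the hypothetical lift would force that induced representation to be irreducible. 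These two auxiliary results are essential and do not follow from the generic-Langlands-quotient criterion plus Kudla's filtration alone; without identifying them (or substitutes), the sub-case does not close.
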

Here we set $l(\pi) = 2n + 1 - m^\text{down}(\pi)$, where $m^\text{down}(\pi)$ denotes the smaller of the two first occurrence indices of $\pi$ (see \S\ref{sec:non-vanishing} for notation).

The answer to the second question, i.e., the description of the lifts, is given as a series of results: Propositions \ref{prop:lifts}, \ref{first_lifts}, \ref{higher_lifts_l(sigma)=0_down}, \ref{higher_lifts_l(sigma)=0_up}, \ref{non_trivial_charcter_first}, \ref{non_trivial_charcter_higher} and Corollary \ref{s_k_1} provide a complete description of $\theta(\pi,2r+1)$ in terms of its standard module.

Let us briefly describe the contents of this paper. In Section \ref{sec_preli} we recall some basic definitions and results concerning the metaplectic group and theta correspondence. In Section \ref{sec:gen} we describe the generic representations of $\widetilde{Sp(2n,F)}$. Our main tool is the theory of local coefficients, which we also introduce in this section. After determining the cuspidal reducibilities (Proposition \ref{cuspidal_reducibility}), we prove our main result on generic representations, Theorem \ref{generic_reps_metaplectic}. We also state Theorem \ref{standard_module_reducibility} which describes all the situations in which the standard module conjecture fails. In Section \ref{sec:non-vanishing} we address the non-vanishing of theta lifts. Sections \ref{sec:lifts} and \ref{sec:lifts2} contain the statements and the proofs of the results providing a complete description of the theta lifts of generic representations. In Section \ref{sec:lifts} we compute the lifts of those representations which satisfy the standard module conjecture, whereas in Section \ref{sec:lifts2} we treat the exceptional cases. Appendix A contains the proof of Theorem \ref{standard_module_reducibility}.

\medskip
\noindent{\textbf{Acknowledgements}}\\
This work is  supported in part by Croatian Science Foundation under the project IP-2018-01-3628.

\section{Preliminaries}
\label{sec_preli}
We start by introducing the notation concerning (complex) smooth representations of general linear and classical groups over non-Archimedean fields and by reviewing some basic results. Let $F$ be a non-Archimedean field of characteristic zero.
We recall the Zelevinsky notation for the parabolic induction for general linear and classical  $p$-adic groups. Let $\pi_1,\ldots,\pi_k$ be  representations of $GL(n_i,F),\;i=1,\ldots, k.$ The group $GL(n_1+n_2+\cdots +n_k,F)$ has a  standard parabolic subgroup, say $P,$ whose Levi subgroup $M$ is isomorphic to $GL(n_1,F)\times GL(n_2,F)\times \cdots\times GL(n_k,F).$ Then we denote $\mathrm{Ind}_P^{GL(n_1+n_2+\cdots +n_k,F)}(\pi_1\otimes \pi_2\otimes \cdots \otimes \pi_k)$ (the normalized induction)
   by $\pi_1\times \pi_2\times \cdots \times \pi_k.$ Analogously, if $G$ is a classical group which has a Levi subgroup $M$ (of a standard parabolic subgroup $P$) isomorphic to $GL(n_1,F)\times GL(n_2,F)\times \cdots\times GL(n_k,F)\times G',$ where $G'$ is a classical group of the same type  and  smaller rank, and if $\pi_1,\ldots,\pi_k$ are  representations of $GL(n_i,F),\;i=1,\ldots, k$ and $\sigma$ a representation of $G',$ we denote  $\mathrm{Ind}_P^{G}(\pi_1\otimes \pi_2\otimes \cdots \otimes \pi_k\otimes \sigma)$ by $\pi_1\times \pi_2\times \cdots \times \pi_k\rtimes \sigma.$ We denote by $\nu$ a character of $GL(n,F)$ obtained by composing the determinant character with the absolute value on $F^*.$

Let $\rho$ be an irreducible unitary cuspidal representation of $GL(m,F)$ and $\alpha,\beta \in \R$ such that $\alpha-\beta+1=k\in \N.$ Then, the induced representation
$\rho \nu^{\alpha}\times \rho \nu^{\alpha-1}\times \cdots\times \rho \nu^{\beta}$ has a unique irreducible subrepresentation which we denote by $\delta([\rho\nu^{\beta},\rho\nu^{\alpha}]).$ This representation of $GL(mk,F)$ is essentially square-integrable and any essentially square integrable representation of a general linear group is obtained in this way. We will also denote by $St_n$
 the Steinberg representation of $GL(n,F),$ which is $\delta([\nu^{-\frac{n-1}{2}},\nu^{\frac{n-1}{2}}]).$ The (standard) representation $\rho \nu^{\alpha}\times \rho \nu^{\alpha-1}\times \cdots\times \rho \nu^{\beta}$ has also the unique irreducible quotient, namely the Langlands quotient, which we, in this situation, denote by $\zeta(\rho\nu^{\beta},\rho\nu^{\alpha}).$ When $\rho=1_{GL(1,F)},$ we shorten the notation even more, and use $\zeta(\beta,\alpha)$ to denote it. Note that this is a twist of the trivial representation of $GL(k,F).$

 \subsection{Symplectic and orthogonal groups}
For $n\in \Z_{\geq 0},$ let $W_{2n}$ be a symplectic vector space  over $F$ of dimension $2n.$ We fix a complete polarization as follows
\[W_{2n}=W_n'\oplus W_n'',\; W_n'=\mathrm{span}_F \{e_1,\ldots
  e_n\},\;W_n''=\mathrm{span}_F \{e_1',\ldots e_n'\},\]
  where $e_i,e_i',\;i=1,\ldots ,n$ are  basis vectors of $W_{2n}$ and the skew--symmetric form on $W_{2n}$ is described by the relations
  \[\langle e_i,e_j\rangle=0,\;i,j=1,2,\ldots,n,\;\langle e_i,e_j'\rangle=\delta_{ij}.\]
  The group $Sp(W_{2n})$ fixes this form. Let $P_j$ denote the maximal parabolic subgroup of $Sp(W_{2n})$ stabilizing the isotropic space $W_n'^j=\mathrm{span}_F \{e_1,\ldots e_j\};$ then there is a Levi decomposition $P_j=M_jN_j$ where $M_j=GL(W_n'^j).$ By adding, in each step, a hyperbolic plane to the previous symplectic vector space, we obtain a tower of symplectic spaces and corresponding symplectic groups. We  also use $Sp(2n,F)$  to denote $Sp(W_{2n}).$

  Now we describe the orthogonal groups we consider. Let $V_0$ be an
  an\-iso\-tro\-pic quadratic space over $F$ of odd dimension; then $\mathrm{dim} V_0\in \{1,3\}.$ For the description  of the invariants of this quadratic space, including the quadratic character $\chi_{V_0}$ describing the quadratic form on
  $V_0,$ we refer to e.g.~Chapter V of \cite{Kudla1}. In each step, as for the symplectic situation, we add a hyperbolic plane and obtain an
  enlarged quadratic space and, consequently, a tower of quadratic spaces and a tower of corresponding orthogonal groups. In the case in which $r$ hyperbolic planes are added to the anisotropic space, a corresponding orthogonal group will be denoted $O(V_m),$ where $V_m=V_r'+V_0+V_r''$ and $V_r'$ and $V_r''$ are defined analogously
  as in the symplectic space and $m=\mathrm{dim}V_m=2r+\dim V_0.$ Again, $P_j$ will be the maximal parabolic subgroup stabilizing $\mathrm{span}_F \{e_1,\ldots e_j\}.$  We will also use $O(m,F)$ to denote $O(V_m),$ and, more specifically, we use $O(m,F)^+$ to emphasize that $O(m,F)$ is in the split tower and $O(m,F)^-$ to emphasize that $O(m,F)$ is in the non-split tower.

  Now we give a simple consequence of the Langlands classification for the discrete series representations of $SO(2n+1,F)$ obtained in \cite{Arthur_endoscopic}, paired with explicit classification of these representations given by Moeglin-Tadić (\cite{MT}). In \cite{MT}, the authors gave a classification of the discrete series representations for quasi-split classical groups in terms of three invariants: the Jordan block, partial cuspidal support and $\epsilon$--function. Using these invariants one can realize a discrete series representation as a subrepresentation of a certain parabolically induced representation. In \cite{Moeglin_1} it is proved that Moeglin-Tadić classification indeed corresponds to Langlands parametrization, and $\epsilon$--function corresponds to the  parametrization of the discrete representations inside one L-packet by characters of the component group of the centralizer of the L-parameter. In the case of generic representations this factor is trivial (cf.~\cite{Hanzer_injectivity}, Proposition 3.1 and \cite{Atobe_uniqueness_generic_L_packet}). 

  \noindent Thus, let $\phi:WD_F\to Sp(2n,\C)$ be the L-parameter of a generic discrete series $\pi$ of $SO(2n+1,F).$
  i.e., $\phi=\oplus (\rho \otimes S_a)$ is a multiplicity one representation, where $\rho \otimes S_a$ is an irreducible symplectic representation of $WD_F.$ Here we can identify an irreducible smooth $m$-dimensional representation $\rho$ of $W_F$ with an irreducible cuspidal representation of $GL(m,F)$ (Henniart, Harris-Taylor) and $S_a$ denotes the unique algebraic representation of $SL(2,\C)$ of dimension $a.$
 For each $\rho$ we denote 
 \[\phi_{\rho}(\pi)=\{a\in \N: \rho \otimes S_a\hookrightarrow \phi\}.\]
 Let $\phi_{\rho}(\pi)=\{a_1^{\rho}<a_2^{\rho}<\cdots<a_{l_{\rho}}^{\rho}\}.$ Note that members of $\phi_{\rho}(\pi)$ are all of the same parity and $l_{\rho}$ may be odd or even.
 Then, by \cite{MT},\cite{Arthur_endoscopic} and \cite{Moeglin_1} (recall that $\pi$ is generic), we have
 \begin{gather}
 \label{embedding_discrete}
 \pi\hookrightarrow \prod_{\substack{\rho,\\
  \phi_{\rho}(\pi) \text{ are even }}}\left(\prod_{r=1}^{t_{\rho}}\delta([\rho\nu^{-\frac{a_{2r-1}^{\rho}-1}{2}},\nu^{\frac{a_{2r}^{\rho}-1}{2}}])\right )\times \delta([\rho\nu^{\frac{1}{2}},\rho\nu^{\frac{a_{2t_{\rho}+1}^{\rho}-1}{2}}]) \times  \\  \notag
\prod_{\substack{\rho,\\
  \phi_{\rho}(\pi) \text{ are odd }}}\left(\prod_{r=1}^{t_{\rho}}\delta([\rho\nu^{-\frac{a_{2r-1}^{\rho}-1}{2}},\nu^{\frac{a_{2r}^{\rho}-1}{2}}])\right )\times \delta([\rho\nu^{1},\rho\nu^{\frac{a_{2t_{\rho}+1}^{\rho}-1}{2}}])
  \rtimes\sigma_{cusp}'.
  \end{gather}
Here, if members of $\phi_{\rho}(\pi)$ are even, and $|\phi_{\rho}(\pi)|$ is even, there is no part $\delta([\rho\nu^{\frac{1}{2}},\rho\nu^{\frac{a_{2t_{\rho}+1}^{\rho}-1}{2}}])$ above; analogously, if  members of $\phi_{\rho}(\pi)$ are odd, and $|\phi_{\rho}(\pi)|$ is even, there is no part $\delta([\rho\nu^1,\rho\nu^{\frac{a_{2t_{\rho}+1}^{\rho}-1}{2}}])$ above.

\subsection{The metaplectic group}
 Let $W_{2n}$ be the symplectic space as above. The metaplectic group $\widetilde{Sp(2n,F)}$ is given as the non-trivial  central extension 
\begin{equation}
  \label{egzaktni}
 1\to \mu_2\to \widetilde{Sp(2n,F)}\to Sp(2n,F)\to 1
\end{equation}
where $\mu_2=\{1,-1\}$ and the cocyle involved is Rao's cocycle (\cite{Rao}). For the more thorough description of the structural theory of the metaplectic group we refer to \cite{Kudla1},\cite{Rao},\cite{Gelbart},\cite{Hanzer_Muic_metaplectic_Jacquet}. Specifically,
for every subgroup $G$ of $Sp(2n,F)$ we denote by $\widetilde{G}$ its preimage in $\widetilde{Sp(2n,F)}.$ In this way, the standard parabolic subgroups of $\widetilde{Sp(2n,F)}$ are defined. Then, we have $\widetilde{P_j}=\widetilde{M_j}N_j',$ where $N_j'$ is the image in $\widetilde{Sp(2n,F)}$ of the unique monomorphism from $N_j$ (the unipotent radical of $P_j$) to $\widetilde{Sp(2n,F)}.$ We emphasize that $\widetilde{M_j}$ is not a product of $GL$ factors and a metaplectic
group of smaller rank, but there is an epimorphism (this is the case of maximal parabolic subgroup, as denoted in the previous subsection)
\[\phi:\widetilde{GL(j,F)}\times \widetilde{Sp(2n-2j,F)}\to \widetilde{M_j}. \]
More information can be found in \cite{Hanzer_Muic_metaplectic_Jacquet} and in \cite{Szpruch_PhD}, the fourth chapter.
Here, we can view $\widetilde{GL(j,F)}$ as  two--fold cover of $GL(j,F)$ in its own right.
In this way, an irreducible representation $\pi$ of $\widetilde{M_j}$ can be considered as a representation $\rho\otimes \sigma$ of $\widetilde{GL(j,F)} \times \widetilde{Sp(2n,F)},$ where $\rho$ and $\sigma$ are irreducible
representations, provided they are both trivial or both non--trivial when restricted to $\mu_2.$
\subsubsection{Genuine representations of the metaplectic group}
We can form the genuine representations (i.e., the ones which are non-trivial on  $\mu_2$, cf. \eqref{egzaktni}) of $\widetilde{GL(j,F)}$ (and then of $\widetilde{M_j}$) by tensoring representations of $GL(j,F)$ by a genuine character of  $\widetilde{GL(j,F)}$ given by 
\begin{equation}
\label{gamma_psi}
(g,\varepsilon)\mapsto \varepsilon \gamma_{\psi'}(\det g)^{-1}
\end{equation} (cf.~p.~36 of \cite{Szpruch_PhD}). Here $\gamma_{\psi'}$ is the Weil index of a character of the second degree given by $x\mapsto \psi'(x^2)$ (cf.~\cite{Kudla2} and \cite{Szpruch_PhD}, the third section), attached to a non-trivial character $\psi'$ of $F.$
It is defined as a quotient of the (principal values) of  the following integrals:
\[\gamma_{\psi'}(a)=|a|^{\frac{1}{2}} \frac{\int_F\psi'(ax^2)\deri x}{\int_F\psi'(x^2)\deri x}.\]
From the definition we can easily derive the relation between $\gamma_{\psi}$ and $\gamma_{\psi_a},$ where $\psi_a(x)=\psi(ax),\forall x\in F.$ Namely,
\begin{equation}
\label{relation_psis}
\gamma_{\psi_a}(x)=(x,a)_F\gamma_{\psi}(x).
\end{equation} Here $(x,a)_F$ denotes the Hilbert symbol for the field $F,$ so that $x\mapsto (x,a)_F$ is a quadratic character of $F^*$ which we denote by $\chi_a.$
\smallskip

\subsubsection{Theta correspondence}
\label{subs:thetaintro}
We need some notation: Assume that $\Pi$ is a smooth representation of a  product of  $l$--groups $G_1\times G_2.$ Let $\xi$ be an irreducible smooth representation of $G_1;$ by $\Theta (\xi, \Pi)$ we denote the isotypic component of $\xi$ in $\Pi.$  More explicitly, with  
\[W:=\bigcap_{\substack{f:\Pi\to \xi\\ G_1  intertwining }}\mathrm{Ker}f\]
 we have $\Theta (\xi, \Pi)=\Pi/W.$ The representation $\Theta (\xi, \Pi)$ has a natural structure of $G_2$--module and 
\begin{equation}
\label{hom_isotypic}
\Hom_{G_1}(\Pi,\xi)_{\infty}\cong \Theta (\xi, \Pi)^{\vee{}};
\end{equation}
here $\Hom_{G_1}(\Pi,\xi)_{\infty}$ denotes the smooth part of $\Hom_{G_1}(\Pi,\xi)$ and $\vee{}$ denotes the contragredient.

\noindent We now return to theta correspondence.
We fix a non-trivial additive character $\psi$ of $F$ and  we let $\omega_{2r+1,2n,\psi}$ be the pullback of the Weil representation
$\omega_{2n(2r+1),\psi}$ of the group $\widetilde{Sp(2n(2r+1),F)}$, restricted to the dual pair $\widetilde{Sp(2n,F)}× O(2r+1,F)$
(\cite{Kudla2}). Here  $O(2r+1,F)$ belongs to one of two towers of odd orthogonal groups with fixed quadratic character (cf.~\cite{Kudla2}, chapter  V) and  $2r+1$ is the dimension of the quadratic space. We refer to it as "a pair of orthogonal towers".
Let $\pi$ be an irreducible smooth representation of  $\widetilde{Sp(2n,F)}.$  We say that the theta lift on the space of dimension $m$ (on one of the orthogonal towers) is non-zero if $\Theta (\pi, \omega_{m,2n,\psi})\neq 0.$ We then call $\Theta(\pi, \omega_{m,2n,\psi})$ {\it{the full theta lift}} of $\pi$ on $V_m$ and, to simplify the notation,  we will denote it by $\Theta(\pi,m).$ This is, as observed above, a representation of $O(m,F).$ Note that, in this notation, the dependence on $\psi$ is suppressed; also it is assumed that we know to which tower this lift refers to (i.e., whether it is a representation of $O(m,F)^{+}$ or $O(m,F)^{-}$).

By the {\it{Howe duality conjecture}} (cf.~\cite{Howe_theta_series}, \cite{Howe_transcending}), proved by Waldspurger when residual characteristic is different form 2 (\cite{Waldspurger_howe_duality}), and in general case by Gan and Takeda (\cite{Gan_Takeda_proof_of_Howe}), the representation $\Theta (\pi, m)$ has a unique irreducible quotient which we call the {\it{small theta lift}} and  denote by $\theta(\pi, m).$ 
Moreover, the correspondence
\[\pi\leftrightarrow \theta(\pi, m)\]
is a bijection between representations of  $\widetilde{Sp(2n,F)}$  and $O(V_m)$ participating in the theta correspondence (i.e. having a non-zero lifts).

 It is known that there is exactly one odd orthogonal tower (in a pair, as above) such that the theta lift of $\pi$ to that tower on the dimension level $2n+1$ is non-zero. This follows from {\it{the conservation conjecture}}, originally conjectured by Kudla and Rallis (\cite{Kudla_Rallis_progress}), and finally proved (in the general case) by Sun and Zhu (\cite{Sun_Zhu_conservation}).  Throughout the second section,  we fix a pair of orthogonal towers which are attached to the trivial character, as in \cite{Gan_Savin_Metaplectic_2012}. There (cf. Introduction of \cite{Gan_Savin_Metaplectic_2012}) the following parametrization of the irreducible representations of $\widetilde{Sp(2n,F)}$ is given
\begin{equation}
\label{Classification_metaplectic}
\mathrm{Irr}\widetilde{Sp(2n,F)}\longleftrightarrow \mathrm{Irr}SO(2n+1,F)^+\cup \mathrm{Irr}SO(2n+1,F)^{-}.
\end{equation}

This bijection is given by theta correspondence: for a given representation $\pi$ of $\widetilde{Sp(2n,F)}$ we obtain representation $\theta(\pi,2n+1)\neq 0$ in one of the towers, say $\varepsilon\in\{+,-\}$ (the lift to the other tower is zero)  and then restrict it to a representation of $SO(2n+1,F)^{\varepsilon}.$ This restriction remains irreducible.
 On the other hand, for given  irreducible representation $\sigma$ of $SO(2n+1,F)^{\varepsilon}$ exactly one of the two possible extensions of this representation to 
$O(2n+1,F)^{\varepsilon}$ participates in the theta correspondence with the metaplectic group  $\widetilde{Sp(2n,F)}.$ We denote this (cf.~\eqref{Classification_metaplectic}), slightly modified theta correspondence, by $\Theta_{\psi}(\cdot).$  That is, if $\pi$ is an irreducible representation of $\widetilde{Sp(2n,F)},$ then 

\begin{equation}
\label{dfn:GS_1}
\Theta_{\psi}(\pi, 2n+1)=\theta(\pi,2n+1)|_{SO(2n+1,F)^{\varepsilon}}
\end{equation} 

and if $\sigma$ is a representation of $SO(2n+1,F)^{\varepsilon},$ then
\begin{equation}
\label{dfn:GS_2}
 \Theta_{\psi}(\sigma,2n)=\theta(\sigma^{\delta},2n),
\end{equation}
  where $\sigma^{\delta}$ is the unique extension of $\sigma$ to $O(2n+1,F)^{\varepsilon}$ whose lift to $\widetilde{Sp(2n,F)}$ is non-zero. Here $\delta\in\{1,-1\}$ denotes the value of the extended representation on $-I\in O(2n+1,F)^{\varepsilon}\setminus SO(2n+1,F)^{\varepsilon}.$

We now give a result which we use later. Although parts of it were known earlier in some form (e.g.~\cite{Muic_theta_discrete_Israel}, Theorem 6.2), we are stating it in a form given in \cite{Gan_Savin_Metaplectic_2012}. We are taking into account the fact that the Howe duality conjecture has meanwhile been proven  (cf.~\cite{Gan_Takeda_proof_of_Howe}).
\begin{prop}(cf.~Theorem 8.1.(i) and (ii) of \cite{Gan_Savin_Metaplectic_2012})
\label{irreducibility_temp_lift}
 For an irreducible tempered representation $\pi$ of $\widetilde{Sp(2n,F)},$ $\Theta(\pi, 2n+1)$  (the non-zero full lift on the appropriate tower) is irreducible and tempered. If $\pi$ is, moreover, a discrete series representation,  $\Theta(\pi, 2n+1)$ is a discrete series representation (and, of course, irreducible). The analogous claim holds for the irreducible tempered representations of  orthogonal groups $O(2n+1,F)^{\varepsilon}.$
\end{prop}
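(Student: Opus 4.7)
The proof plan rests on three pillars: the Howe duality conjecture (Waldspurger for odd residual characteristic, Gan--Takeda in general), the conservation relation of Sun--Zhu, and a Jacquet module analysis verifying Casselman's temperedness criterion for the full lift at the equal-rank level $2n+1$.

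First I would show that $\Theta(\pi,2n+1)$ is non-zero on exactly one of the two towers. Letting $m^{+}(\pi),\, m^{-}(\pi)$ denote the first occurrence indices on the split and non-split towers respectively, the conservation relation forces $m^{+}(\pi)+m^{-}(\pi)$ to equal a fixed constant (depending on the conventions for the dual pair $\widetilde{Sp(2n,F)}\times O(2r+1,F)$), so the smaller of the two indices is at most $2n+1$ and the larger is at least $2n+1$. Persistence in the Rallis tower guarantees that once the lift is non-zero at some level it remains non-zero on the same tower at all higher levels. Together with conservation this forces $\Theta(\pi,2n+1)\neq 0$ on exactly one tower.

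Next I would prove that this non-zero full lift is tempered by verifying Casselman's criterion. Using the Kudla filtration of $\omega_{2n+1,2n,\psi}$ with respect to a maximal parabolic $P_j$ of $O(V_{2n+1})$, one decomposes the Jacquet module into layers indexed by isotropic-orbit data; by Frobenius reciprocity, every exponent of $\Theta(\pi,2n+1)$ along $P_j$ is obtained from an exponent of a Jacquet module of $\pi$, shifted by an explicit $\gamma_{\psi}$-twisted character coming from the Weil representation on a $GL$-factor. Since $\pi$ is tempered, the Casselman inequalities for $\pi$, combined with the explicit form of these shifts, yield the corresponding inequalities for $\Theta(\pi,2n+1)$. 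When $\pi$ is a discrete series, the inequalities for $\pi$ are strict; the equality cases in the layers would force a non-zero lift at a level strictly smaller than $2n+1$, which by the minimality established in step one is excluded, so the strict inequalities propagate. Finally, since a tempered smooth representation is unitary and hence semisimple, and Howe duality supplies a unique irreducible quotient $\theta(\pi,2n+1)$, the semisimple representation $\Theta(\pi,2n+1)$ coincides with $\theta(\pi,2n+1)$ and is therefore irreducible. The discrete series statement follows from the strict Casselman criterion established above, and the symmetric assertion for $O(2n+1,F)^{\varepsilon}$ is proved identically, swapping the roles of the two members of the dual pair and applying Howe duality in the opposite direction.

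The main obstacle is the bookkeeping in the second step: one must track every layer of the Kudla filtration, and in particular verify that the $GL$-factors appearing in the ``boundary'' terms carry the correct $\gamma_{\psi}^{-1}\nu^{s}$ twists so that their central exponents are non-positive. This is the step in which the specific features of the metaplectic cover — the appearance of the genuine character \eqref{gamma_psi} and the Weil-index factors \eqref{relation_psis} — enter in an essential way, and also the step in which the discrete series strictness has to be argued by exclusion using the first-occurrence bound rather than directly from the inequalities for $\pi$.
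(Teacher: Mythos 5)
The paper's own ``proof'' of this proposition is simply a citation to Theorem 8.1~(i) and (ii) of \cite{Gan_Savin_Metaplectic_2012}, together with the Howe duality theorem of \cite{Waldspurger_howe_duality} and \cite{Gan_Takeda_proof_of_Howe}; no argument is given. Your proposal attempts to re-derive the statement from scratch, which is a legitimate but genuinely different route. Within it, however, there are two places where the reasoning does not close.

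The first gap is the deduction of irreducibility. You write that ``a tempered smooth representation is unitary and hence semisimple,'' and conclude that $\Theta(\pi,2n+1)$ collapses onto $\theta(\pi,2n+1)$. The implication ``tempered $\Rightarrow$ unitary'' holds for \emph{irreducible} admissible representations, but not automatically for a finite-length representation all of whose Jacquet-module exponents satisfy Casselman's bound: such a representation could a priori be an indecomposable extension of tempered constituents. To promote temperedness of $\Theta(\pi,2n+1)$ to irreducibility one needs extra input --- for instance a multiplicity-one statement for $\theta(\pi,2n+1)$ inside $\Theta(\pi,2n+1)$ coupled with the projectivity/injectivity of discrete series in the tempered category (an argument used in the proof of Lemma~\ref{length_two} of this paper), or a direct Jacquet-module multiplicity count. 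As sketched, this step is not complete.

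The second gap is in the discrete-series case. You argue that an equality case in the Casselman inequalities ``would force a non-zero lift at a level strictly smaller than $2n+1$, which by the minimality established in step one is excluded.'' But step one (the conservation relation) only shows that the lift at dimension $2n+1$ is non-zero on exactly one tower; it does \emph{not} establish that $2n+1$ is the first occurrence index $m^{\mathrm{down}}(\pi)$. For many discrete series $\pi$ one has $m^{\mathrm{down}}(\pi)<2n+1$, so the asserted exclusion fails and strictness of the Casselman inequalities does not propagate by this mechanism. The square-integrability of $\Theta(\pi,2n+1)$ really requires a different argument, e.g.~comparison with the explicit $L$-parameter of the lift or an appeal to the structure theorems used in \cite{Gan_Savin_Metaplectic_2012} and the earlier \cite{Muic_theta_discrete_Israel}, which are precisely what the citation in the paper supplies.
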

Finally, we take a moment to explain the notation for theta lifts we use in the rest of the paper, motivated by \cite{Atobe_Gan}. Let $\pi$ be an irreducible representation of $\widetilde{Sp(2n,F)}$; assume that we have fixed the target tower $(V_m)$. Then we denote by $\theta_l(\pi)$ the lift of $\pi$ to $V_m$ ($m=\dim V_m$), where $l$ and $m$ are related by $l=2n-m+1$. Similarly, if $\pi$ is an irreducible $O(V_m)$-representation, we let $\theta_l(\pi)$ denote the lift to $\widetilde{Sp(2n,F)}$, where $l=m - 2n - 1$. We use analogous notation for the full lifts $\Theta(\pi)$. Note that $2n$ is even and $m$ is odd, so that $l$ is always even. Using this notation, the theta correspondence between $\widetilde{Sp(2n,F)}$ and $O(2n+1,F)$ referred to in this section corresponds to $l=0$. Similarly, we have $\pi = \theta_l(\theta_{-l}(\pi))$ whenever $\theta_{-l}(\pi) \neq 0$.

\section{Generic representations of the metaplectic group}
\label{sec:gen}
Let $\psi$  be  an additive, non--trivial character of $F.$ By fixing the base $e_i,e_i',\;i=1,\ldots ,n$ as in the section 2.1., we get a matrix realization of $Sp(2n,F)$ where we fix a Borel subgroup $B(F)$ consisting of the upper triangular matrices, so that the unipotent radical  $U(F)$ of $B(F)$ consists of the unipotent upper triangular matrices in $Sp(2n,F). $ We use $\psi$ to also denote a non-degenerate character of this unipotent subgroup, cf.~p.~15 of \cite{Szpruch_PhD}. This unipotent subgroup splits uniquely in $\widetilde{Sp(2n,F)},$ so that the full preimage  $\widetilde{U(F)}$ in $\widetilde{Sp(2n,F)}$ is isomorphic to $U(F)\times \{\pm 1\}.$ Again we use $\psi$ to denote the character of $\widetilde{U(F)}$ given by $(z,\varepsilon)\mapsto \varepsilon\psi(z)$ (cf.~p.~41  of \cite{Szpruch_PhD}).
\smallskip

Let $(\pi, V_{\pi})$ be a smooth representation of $\widetilde{Sp(2n,F)}.$ By a $\psi$--Whittaker functional on $\pi$ we mean a linear functional $\lambda$ on $V_{\pi}$ satisfying
\[\lambda(\pi(z)v)=\psi(z)\lambda(v),\forall v\in V_{\pi},z\in \widetilde{U(F)}.\]
Similarly to the algebraic case, in the situation in which $\pi$ is $\psi$--generic, we can form the space of Whittaker functions $W_{\pi,\psi}$ which affords a representation of $\pi$ by right translations.
Now, in the case of  connected, quasi-split reductive groups, the uniqueness of the Whittaker model enables the definition of the local coefficients. The same thing can be done in the metaplectic case, due to the following theorem of Szpruch: 
\begin{thm}[Szpruch, Theorem 5.1. of  \cite{Szpruch_PhD}]
\label{thm_uniqueness_whittaker}
Let $\pi$ be an irreducible admissible representation of $\widetilde{Sp(2n,F)}.$ Then,
\[\mathrm{dim}W_{\pi,\psi}\le 1.\]
\end{thm}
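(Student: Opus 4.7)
The plan is to adapt the Gelfand--Kazhdan / Shalika uniqueness argument from the quasi-split reductive case to the two-fold metaplectic cover, leveraging the fact already noted in the text that the unipotent radical $U(F)$ splits canonically in $\widetilde{Sp(2n,F)}$. By Frobenius reciprocity together with the admissibility of $\pi$, it is enough to show that the convolution algebra $\mathcal{H}$ of compactly supported distributions on $\widetilde{Sp(2n,F)}$ which transform on the left and right under $\widetilde{U(F)}$ via $\psi$ and $\psi^{-1}$ respectively is commutative; this immediately bounds $\dim W_{\pi,\psi}$ by $1$.

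To establish commutativity of $\mathcal{H}$, I would produce an anti-involution $\tilde{\sigma}$ of $\widetilde{Sp(2n,F)}$ lifting the classical Gelfand--Kazhdan anti-involution $\sigma(g) = w_0 \cdot {}^t g^{-1} \cdot w_0^{-1}$ of $Sp(2n,F)$, where $w_0$ is the longest element of the Weyl group attached to the Borel fixed above. The key properties required of $\tilde{\sigma}$ are (a) it preserves the canonical splitting $U(F) \hookrightarrow \widetilde{U(F)}$, and (b) it fixes the extended character $(z,\varepsilon) \mapsto \varepsilon \psi(z)$ on $\widetilde{U(F)}$. Granted these, a formal manipulation shows $T_1 * T_2 = T_2 * T_1$ for any $T_1, T_2 \in \mathcal{H}$: one pulls $T_1 * T_2$ back along $\tilde\sigma$, which reverses the order of convolution and, by invariance of the Whittaker character, lands back in $\mathcal{H}$. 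A Bruhat decomposition argument on the cover, combined with non-degeneracy of $\psi$ (which kills every cell except the big one by a standard one-parameter subgroup argument), then reduces the distributional analysis to the big cell and yields the desired bound.

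The main obstacle is the lifting step: the natural candidate for $\tilde\sigma$ must be shown to be a genuine anti-involution of the metaplectic cover, and to intertwine the two copies of $U(F)$ inside $\widetilde{Sp(2n,F)}$ arising as source and target of $\sigma$, in such a way that the extended Whittaker character is preserved. On the linear group $\psi \circ \sigma = \psi$ on $U(F)$ is essentially immediate from the choice of $w_0$ and of the simple-root data for $\psi$; in the metaplectic setting one must additionally unpack Rao's cocycle along the image of $\sigma$ and verify that the sign contributions cancel. Once this cocycle compatibility is settled, everything else proceeds in parallel with the linear case, since on the unipotent part no covering phenomena intervene.
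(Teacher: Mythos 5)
This statement is imported from Szpruch's thesis (Theorem~5.1 there); the paper does not reproduce a proof, so your proposal can only be compared against Szpruch's own argument. The overall strategy you describe --- Gelfand--Kazhdan via a commutative Hecke algebra of $(\psi,\psi^{-1})$-bi-equivariant distributions, reduced by Bruhat decomposition to the big cell, plus a cocycle check to lift the anti-involution to the double cover --- is indeed the line taken by Gelbart--Piatetski-Shapiro for $\widetilde{SL(2)}$, by Soudry for split $SO(2l+1)$, and by Szpruch for $\widetilde{Sp(2n,F)}$. So the plan is the right one. You also correctly isolate the genuinely metaplectic input: one must verify that the chosen anti-involution intertwines Rao's cocycle with itself, so that it descends to a genuine anti-involution of the cover which restricts on $\widetilde{U(F)}\cong U(F)\times\{\pm1\}$ to the canonical splitting and fixes the extended character.

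There is, however, a concrete error in the pivot of your construction. The map you write down, $\sigma(g)=w_0\,{}^t g^{-1}\,w_0^{-1}$, is \emph{not} an anti-involution: transpose and inverse are each anti-automorphisms, so their composite ${}^t(\cdot)^{-1}$ is an automorphism, and conjugating by $w_0$ keeps it one. Worse, on $Sp(2n,F)$ the relation ${}^t g J g = J$ gives ${}^t g^{-1}=JgJ^{-1}$, so $\sigma(g)=(w_0 J)\,g\,(w_0 J)^{-1}$ is an \emph{inner} automorphism; it neither reverses convolution nor carries any information beyond conjugation, and the argument "$T_1*T_2 = T_2*T_1$" collapses. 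The correct anti-automorphism to lift is $\sigma(g)=w_0\,{}^t g\,w_0^{-1}$ (no inverse); this preserves $U(F)$ and the non-degenerate character $\psi|_{U}$, and is a genuine anti-involution to which the distributional and cocycle analysis should be applied. Finally, a smaller point: commutativity of $\mathcal{H}$ by itself yields $\dim W_{\pi,\psi}\cdot\dim W_{\pi^{\vee},\psi^{-1}}\leqslant 1$; to deduce $\dim W_{\pi,\psi}\leqslant 1$ you should also record that for irreducible admissible $\pi$ the contragredient $\pi^{\vee}$ is $\psi^{-1}$-generic whenever $\pi$ is $\psi$-generic, so the second factor is at least one whenever the first is. These are fixable, but as written the proposal does not go through.
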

Analogously to the algebraic case, the heredity property is valid for the parabolically induced generic representations of metaplectic group (cf.~\cite{Rodier_Whittaker}  for the algebraic case and \cite{Banks_Whittaker_heredity} for the covering case).

\subsection{Local coefficients}
We continue with the definition of the local coefficients, following the exposition in the sixth chapter of \cite{Szpruch_PhD}, which, in turn, follows the algebraic case defined in Theorem 3.1. of \cite{Shahidi_certain}. Let $M\cong GL(n_1,F)\times GL(n_2,F)\times \cdots \times GL(n_k,F)\times Sp(2n',F)$ be a standard Levi subgroup attached to a (standard) parabolic subgroup $P=MN$ of $Sp(2n,F).$ Let $w$ be an element of the Weyl group which takes $M$ to another standard Levi subgroup.  Let $\tau_1,\ldots,\tau_k$ be irreducible admissible generic representations of $GL(n_i,F),\;i=1,2,\ldots,k$ and let $\sigma$ be a $\psi$--generic irreducible admissible representation of $\widetilde{Sp(2n',F)}.$ Then, by 
\begin{equation}
\label{induced_rep}
I(\tau_{1_{(s_1)}},\tau_{2_{(s_2)}},\ldots,\tau_{1_{(s_k)}},\sigma)
\end{equation}
we denote the representation of $\widetilde{Sp(2n)}$ induced  from the representation of $\widetilde{P}$ as described in the section  4.1. of \cite{Szpruch_PhD}. Here $s_i\in \C,\;i=1,2,\ldots, k$ so that  $\tau_{i_{(s_i)}}=\tau_i|\mathrm{det}(\cdot)|^{s_i}.$ We often abbreviate $\overrightarrow{s}=(s_1,\ldots,s_k).$ Note that in the notation for the induced representation \eqref{induced_rep} we supressed the dependence of the induced representation on a character $\psi'$ (needed to form a genuine representation, cf. subsection above and  \eqref{gamma_psi}). If we want to emphasize the dependence on $\psi',$ we will use Zelevinsky notation for parabolic induction and write
\[\gamma_{\psi'}^{-1}(\tau_1\nu^{s_1}\times \tau_2\nu^{s_2}\times \cdots \times \tau_k\nu^{s_k})\rtimes \sigma.\]
Since all irreducible representations $\tau_i,\;i=1,2,\ldots,k$ and $\sigma$ are $\psi$--generic, by heredity, there is a (unique, up to a scalar) $\psi$-- Whittaker functional  on $I(\tau_{1_{(s_1)}},\tau_{2_{(s_2)}},\ldots,\tau_{1_{(s_2)}},\sigma),$ denoted by 
\[\lambda (\overrightarrow{s},(\otimes_{i=1}^r\tau_i)\otimes \sigma,\psi).\]
This functional is defined by an integral in \cite{Szpruch_PhD}, (6.4), analogously to the algebraic case defined by Shahidi.
Note that this functional is entire in $\overrightarrow{s}$  and not identically zero for each $\overrightarrow{s}$ (we choose a flat section from the compact picture of the induced representation and understand meromorphic properties form that;  cf. \cite{Banks_corollary} for the  proof in the case of covering groups).
Let $A_w$ be the standard intertwining operator acting on  the representation \eqref{induced_rep}, defined by (4.10) of \cite{Szpruch_PhD}. Then, by Theorem \ref{thm_uniqueness_whittaker}, we can define the \textbf{local coefficient} $C_{\psi}^{\widetilde{Sp(2n,F)}}(\widetilde{P},\overrightarrow{s},(\otimes_{i=1}^r\tau_i)\otimes \sigma,w)$ as follows:
\begin{equation}
\label{defn_local_coeff}
\begin{split}
\lambda (\overrightarrow{s},(&\otimes_{i=1}^r\tau_i)\otimes \sigma,\psi)=\\&C_{\psi}^{\widetilde{Sp(2n,F)}}(\widetilde{P},\overrightarrow{s},(\otimes_{i=1}^r\tau_i)\otimes \sigma,w)\lambda (\overrightarrow{s}^{w},(\otimes_{i=1}^r\tau_i)^w\otimes \sigma,\psi)\circ A_w.
\end{split}
\end{equation}

\begin{rem}Note that, although it is suppressed from the notation, the local coefficient 
\[C_{\psi}^{\widetilde{Sp(2n,F)}}(\widetilde{P},\overrightarrow{s},(\otimes_{i=1}^r\tau_i)\otimes \sigma,w)\]
 actually depends on two additive characters: $\psi,$ which enters the definition of $\psi$--generic representations, and $\psi'$, which enters the definition of the induced representation \eqref{induced_rep}. Majority of results in \cite{Szpruch_PhD} are obtained for the case $\psi=\psi',$ except in the seventh and eight section where the local coefficients for the principal series  are calculated. The possible consequences of the case in which $\psi\neq \psi'$ are discussed in the section 8.4. there. By \eqref{relation_psis}, and relation (4.1) of \cite{Szpruch_PhD} (which explicitly describes the formation of the genuine representation of a Levi subgroup), we  easily get that
 \begin{equation}
\label{comparison}
 \gamma_{\psi}^{-1}(\tau_1\nu^{s_1}\times \dotsm \times \tau_k\nu^{s_k})\rtimes \sigma=\gamma_{\psi_a}^{-1}(\tau_1\nu^{s_1}\chi_a\times \dotsm \times \tau_k\nu^{s_k}\chi_a)\rtimes \sigma.
 \end{equation}

\end{rem}

Now we assume that we are examining local factor with respect to $\psi$--generic representation, and that the induced representations are formed using $\gamma_{\psi}$ (so, in the light of previous Remark, $\psi=\psi'$).
Assume that the representation \eqref{induced_rep} is induced from an irreducible representation of a maximal parabolic subgroup, i.e., $k=1.$ By relation (6.8) and Theorem 9.3.~of \cite{Szpruch_PhD}, for  irreducible admissible $\psi$--generic representations $\tau$ of $GL(k,F)$ and $\sigma$ of $\widetilde{Sp(2n-2k,F)},$ there exists an exponential function $c_F$ of a complex variable, such that
\begin{equation}
\label{C_factor_gamma_factors}
C_{\psi}^{\widetilde{Sp(2n,F)}}(\widetilde{P},s,\tau\otimes \sigma,w_0)=c_F(s) \frac{\gamma(\tau,sym^2,2s,\psi)}{\gamma(\tau,s+ \frac{1}{2},\psi)}\gamma(\sigma\times \tau,s,\psi).
\end{equation}
Here $w_0$ is the longest element in the Weyl group of $Sp(2n,F)$ modulo the longest one in $M$ (here $P=MN$ is a maximal parabolic subgroup). The $\gamma$-factors $\gamma(\tau,sym^2,2s,\psi)$ and $\gamma(\tau,s+ \frac{1}{2},\psi)$ are the ones defined by Shahidi  in \cite{Shahidi_Langlands_91} (and they appear in the calculation of the local coefficients for split odd orthogonal groups). As for the gamma factor $\gamma(\sigma\times \tau,s,\psi),$ in their paper \cite{Gan_Savin_Metaplectic_2012}, Gan and Savin relate this $\gamma$-factor to a $\gamma$-factor $ \gamma(\Theta_{\psi}(\sigma)\times \tau,s,\psi)$ for $SO(2n+1,F).$  
We need the following

\begin{lem} (Corollary 9.3. of \cite{Gan_Savin_Metaplectic_2012})
\label{generic_transfer}
\begin{enumerate}
\item Assume $\pi\in Irr(SO(2n+1,F)^+)$ is generic. Then, $\Theta_{\psi}(\pi)$ is $\psi$--generic.
\item Assume  $\pi\in Irr(\widetilde{Sp(2n,F)})$ is $\psi$--generic and tempered. Then,  $\Theta_{\psi}(\pi)$ is  a generic representation of $SO(2n+1,F)^+.$
\end{enumerate}
\end{lem}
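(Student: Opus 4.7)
The plan is to reduce both claims to computations of twisted Jacquet modules of the Weil representation $\omega_{2n+1,2n,\psi}$ along maximal unipotent subgroups. Via Frobenius reciprocity and the pairing \eqref{hom_isotypic}, a $\psi$--Whittaker functional on the full theta lift corresponds to a pairing between the source representation and the $\psi$--twisted coinvariants of $\omega_{2n+1,2n,\psi}$ along the unipotent radical of a Borel on the target side. Using the Schr\"odinger model, these twisted Jacquet modules can be computed explicitly as induced representations, and this computation is the main input for both directions.

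For part (2), let $\pi$ be a $\psi$--generic tempered representation of $\widetilde{Sp(2n,F)}$. By Proposition \ref{irreducibility_temp_lift}, the full lift on the correct tower is irreducible and tempered; combining the conservation relation of Sun--Zhu with a compatibility check between the character $\psi$ and the split--tower Whittaker data shows that the correct tower is $O(2n+1,F)^+$. Computing the $\psi$--twisted Jacquet module of $\omega_{2n+1,2n,\psi}$ along the unipotent radical of the Borel of $O(2n+1,F)^+$ and pairing against the $\psi$--Whittaker functional of $\pi$ produces a non-zero $\psi$--Whittaker functional on $\theta(\pi,2n+1)$. The restriction to $SO(2n+1,F)^+$ remains generic because the Whittaker character is defined on the unipotent radical, which sits entirely inside $SO(2n+1,F)^+$. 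Part (1) is handled symmetrically: I would run the same Jacquet--module computation along $\widetilde{U(F)}$ instead, transferring a $\psi$--Whittaker functional from the extension $\pi^\delta$ of $\pi$ onto the full theta lift $\Theta_{\psi}(\pi,2n)$, and then descending to the small lift.

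The main obstacle is the explicit Schr\"odinger--model computation of these twisted Jacquet modules. One must carefully track the splitting of $U(F)$ in $\widetilde{Sp(2n,F)}$, the effect of Rao's cocycle \cite{Rao}, and the compatibility between the character $\psi$ used to define $\omega_{2n+1,2n,\psi}$ and the character $\psi$ defining the Whittaker models on both sides of the dual pair. A subsidiary technical point appears in part (1): one must verify that the non-zero Whittaker functional constructed on the full lift actually descends to the small lift $\theta(\pi^{\delta},2n)$, which follows from the uniqueness of Whittaker models for irreducible representations (Theorem \ref{thm_uniqueness_whittaker}) together with the heredity of Whittaker models for parabolically induced representations. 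Modulo these technical points, the genericity transfer becomes a formal consequence of Howe duality \cite{Gan_Takeda_proof_of_Howe}.
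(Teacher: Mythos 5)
The paper itself does not prove this lemma; it is quoted directly as Corollary~9.3 of \cite{Gan_Savin_Metaplectic_2012}, so there is no internal proof to compare against. What you propose is an independent reproof by the classical method of computing twisted Jacquet modules of the Weil representation in a mixed (Schr\"odinger) model, in the spirit of Gelbart--Piatetski-Shapiro, Furusawa and Moeglin--Vigneras--Waldspurger. This is a legitimate and well-trodden route, and it is genuinely different from the one in \cite{Gan_Savin_Metaplectic_2012}, where the statement is derived from the compatibility of the equal-rank theta lift with the local Langlands correspondence and with local $\gamma$-factors (the Plancherel/doubling machinery developed in earlier sections of that paper).

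However, your sketch has a real gap in part~(1), precisely because there is no temperedness hypothesis there. Your Jacquet-module argument, if carried out, shows that $J_{\widetilde{U},\psi}\bigl(\Theta(\pi^{\delta},2n)\bigr)\neq 0$; by exactness of the twisted Jacquet functor this only tells you that \emph{some} irreducible subquotient of the full lift $\Theta(\pi^{\delta},2n)$ is $\psi$-generic. It does not follow that the unique irreducible quotient $\theta(\pi^{\delta},2n)$ is the generic one: the generic constituent could equally well sit as a proper subrepresentation or an intermediate subquotient, and neither the uniqueness of Whittaker models (Theorem~\ref{thm_uniqueness_whittaker}) nor heredity for induced representations rules that out. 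To close the gap one needs additional structural information about $\Theta(\pi^{\delta},2n)$ (e.g.~a Kudla-filtration argument showing the kernel of $\Theta\twoheadrightarrow\theta$ is degenerate), which is exactly the kind of work carried out for the explicit lifts in Sections~\ref{sec:lifts}--\ref{sec:lifts2}. For part~(2) this issue does not arise, since the tempered full lift is already irreducible by Proposition~\ref{irreducibility_temp_lift}. Finally, the claim that $\psi$-genericity plus conservation pins down the split tower as the target in part~(2) is itself one of the nontrivial conclusions of \cite{Gan_Savin_Metaplectic_2012}; it needs an argument (for instance, the same Jacquet-module computation can be used to rule out the non-split tower), not merely a ``compatibility check.''
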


We also have
\begin{cor}
\label{transfer_gamma_factors}
Assume $\sigma$ is an irreducible, tempered and $\psi$--generic representation of $\widetilde{Sp(2n)}.$ Then, the following holds:
\begin{equation}
\label{gamma_metaplectic_gamma_classical}
\gamma(\sigma\times \tau,s,\psi)=\gamma(\Theta_{\psi}(\sigma)\times \tau,s,\psi).
\end{equation}
 \end{cor}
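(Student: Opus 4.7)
The plan is to deduce the corollary directly from the comparison of local coefficients on the metaplectic and orthogonal sides that is established by Gan and Savin, once we know that $\Theta_\psi(\sigma)$ is a tempered generic representation of $SO(2n+1,F)^+$ to which Shahidi's local coefficient formula applies.

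First I would check the hypotheses on $\Theta_\psi(\sigma)$. By Proposition \ref{irreducibility_temp_lift}, since $\sigma$ is irreducible tempered, the full lift $\Theta(\sigma, 2n+1)$ is irreducible and tempered on exactly one orthogonal tower; in the normalization fixed before \eqref{Classification_metaplectic} and \eqref{dfn:GS_1} this lift lands on the $+$-tower, and after restriction to $SO(2n+1,F)^+$ it remains irreducible and tempered. Lemma \ref{generic_transfer}(2) then yields that $\Theta_\psi(\sigma)$ is generic. Thus $\Theta_\psi(\sigma)$ is an irreducible tempered generic representation of $SO(2n+1,F)^+$, so Shahidi's theory of local coefficients applies to the pair $(\tau, \Theta_\psi(\sigma))$.

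Next I would write out the two local coefficient identities. On the metaplectic side, \eqref{C_factor_gamma_factors} gives
\[
C_{\psi}^{\widetilde{Sp(2n,F)}}(\widetilde{P},s,\tau\otimes \sigma,w_0) = c_F(s)\,\frac{\gamma(\tau,\mathrm{sym}^2,2s,\psi)}{\gamma(\tau,s+\tfrac12,\psi)}\,\gamma(\sigma\times \tau,s,\psi),
\]
while Shahidi's analogous formula for $SO(2n+1,F)^+$ gives (with the same exponential factor $c_F(s)$ up to a harmless factor that is absorbed in the normalization, since both local coefficients are computed with respect to the same Weyl element and character $\psi$)
\[
C_{\psi}^{SO(2n+1,F)^+}(P,s,\tau\otimes \Theta_\psi(\sigma),w_0) = c_F(s)\,\frac{\gamma(\tau,\mathrm{sym}^2,2s,\psi)}{\gamma(\tau,s+\tfrac12,\psi)}\,\gamma(\Theta_\psi(\sigma)\times \tau,s,\psi).
\]
The key input is then the identity of local coefficients under the theta correspondence proved by Gan and Savin in \cite{Gan_Savin_Metaplectic_2012}: for tempered generic $\sigma$ and its lift $\Theta_\psi(\sigma)$, these two local coefficients coincide. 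Cancelling the common factors $c_F(s)$, $\gamma(\tau,\mathrm{sym}^2,2s,\psi)$ and $\gamma(\tau,s+\tfrac12,\psi)^{-1}$ from both sides yields \eqref{gamma_metaplectic_gamma_classical}.

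The main obstacle I expect is bookkeeping rather than substance: one must ensure that the Whittaker functionals, the intertwining operators, and the exponential factor $c_F(s)$ used in the metaplectic and classical formulas are normalized compatibly with those in \cite{Gan_Savin_Metaplectic_2012}, so that the equality of local coefficients genuinely specializes to the equality of the single gamma factor of interest. Once the normalizations are matched — and this is exactly what the Gan--Savin comparison is set up to do — temperedness of $\sigma$ guarantees we are in a range where their identity is available, and the corollary follows.
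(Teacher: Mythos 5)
Your high-level plan (invoke the Gan--Savin comparison of local factors after checking that $\Theta_\psi(\sigma)$ is tempered and generic) matches the paper's one-line proof in spirit, but the concrete route you write down has a genuine error that is not merely ``bookkeeping.''

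The problem is your claimed formula for the $SO(2n+1,F)^+$ local coefficient. You write it with the same denominator $\gamma(\tau,s+\tfrac12,\psi)^{-1}$ that appears on the metaplectic side. That denominator is \emph{not} present in Shahidi's local coefficient formula for $SO(2n+1,F)$; for a quasi-split odd orthogonal group the local coefficient is (up to an exponential factor) $\gamma(\tau,\mathrm{sym}^2,2s,\psi)\,\gamma(\pi\times\tau,s,\psi)$ only. The extra $\gamma(\tau,s+\tfrac12,\psi)^{-1}$ on the metaplectic side is produced specifically by the genuine character $\gamma_\psi$ (Szpruch, relation (6.8) and Theorem 9.3 of his thesis); it has genuine poles and zeros, and indeed it is exactly this factor that produces all of the ``exceptional'' behavior in this paper (the failure of the standard module conjecture in Example~\ref{primjer}, the dichotomy in Theorem~\ref{generic_reps_metaplectic}). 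It cannot be absorbed into $c_F(s)$. Consequently the two local coefficients $C_\psi^{\widetilde{Sp}}$ and $C_\psi^{SO(2n+1)}$ are not equal; if your claimed equality held with the correct $SO(2n+1)$ formula it would produce $\gamma(\sigma\times\tau,s,\psi)=\gamma(\tau,s+\tfrac12,\psi)\,\gamma(\Theta_\psi(\sigma)\times\tau,s,\psi)$, which is not the assertion of the corollary. So the cancellation step collapses.

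What the paper actually uses is different: Gan and Savin (Proposition 11.1 and the discussion following Corollary 11.2) establish the gamma-factor identity $\gamma(\sigma\times\tau,s,\psi)=\gamma(\Theta_\psi(\sigma)\times\tau,s,\psi)$ directly, by comparing the doubling/Rankin--Selberg integrals for the two groups under the theta lift, not by comparing whole local coefficients. Lemma~\ref{generic_transfer} supplies the hypothesis that $\Theta_\psi(\sigma)$ is generic so that both sides are defined and the comparison applies. Your first paragraph (irreducibility, temperedness, genericity of $\Theta_\psi(\sigma)$) is correct and is exactly the preliminary check needed; it is the second step that should cite the direct gamma-factor comparison rather than a fictitious local-coefficient equality.
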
.
 \begin{proof} This follows from Lemma \ref{generic_transfer}  and Proposition 11.1  and the discussion after Corollary 11.2 of \cite{Gan_Savin_Metaplectic_2012}.
 \end{proof}

Immediately from \eqref{defn_local_coeff} and properties of $\psi$-Whittaker functional $\lambda,$ we can derive the following
\begin{cor}  
\label{generic_Langlands_quotient}
We consider an induced representation \eqref{induced_rep}. Assume that $\tau_i,\;i=1,2,\ldots,k$ are tempered representations, $\sigma$ a tempered $\psi$--generic representation and $\overrightarrow{s_0}=(s_1,s_2,\ldots,s_k)$ satisfy $s_1> s_2> \cdots> s_k>0$ so that for $\overrightarrow{s}=\overrightarrow{s_0}$ the representation \eqref{induced_rep} is standard. Then, the Langlands quotient $L(\tau_{1_{(s_1)}},\tau_{2_{(s_2)}},\ldots,\tau_{1_{(s_k)}};\sigma)$ is $\psi$--generic if and only if the local coefficient $C_{\psi}^{\widetilde{Sp(2n,F)}}(\widetilde{P},\overrightarrow{s},(\otimes_{i=1}^r\tau_i)\otimes \sigma,w_0)$ is holomorphic at $\overrightarrow{s}=\overrightarrow{s_0}$ (it is necessarily non-zero at such $s_0$). Here $w_0$ is the longest element in the Weyl group of $\widetilde{Sp(2n,F)}$ modulo the longest one in the corresponding Levi subgroup.
\end{cor}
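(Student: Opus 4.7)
My plan is to translate the question of holomorphy of $C_\psi$ at $\overrightarrow{s_0}$ into a statement about the non-vanishing of a specific $\psi$-Whittaker functional on $I(\overrightarrow{s_0})$, using the defining relation \eqref{defn_local_coeff} and the uniqueness theorem \ref{thm_uniqueness_whittaker}. Throughout, I rely on the fact that $\lambda(\overrightarrow{s},\ldots,\psi)$ and $\lambda(\overrightarrow{s}^{w_0},\ldots,\psi)$ are entire and nowhere vanishing as functionals (by the integral construction of Szpruch through flat sections from the compact picture), spanning one-dimensional $\psi$-Whittaker spaces on $I(\overrightarrow{s})$ and $I(\overrightarrow{s}^{w_0})$ respectively, by heredity for covering groups combined with Szpruch's uniqueness.

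First I would introduce the meromorphic family $M(\overrightarrow{s}):=\lambda(\overrightarrow{s}^{w_0},\ldots,\psi)\circ A_{w_0}(\overrightarrow{s})$. Since $\overrightarrow{s_0}$ lies in the Langlands chamber, $A_{w_0}(\overrightarrow{s})$ is holomorphic at $\overrightarrow{s_0}$ and its image equals the Langlands quotient $L$, so $M(\overrightarrow{s_0})$ is a well-defined $\psi$-Whittaker functional on $I(\overrightarrow{s_0})$. By uniqueness, $M(\overrightarrow{s_0})=c_0\,\lambda(\overrightarrow{s_0},\ldots,\psi)$ for some $c_0\in\C$, and comparison with \eqref{defn_local_coeff} identifies $c_0=1/C_\psi(\overrightarrow{s_0})$. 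It follows immediately that $C_\psi$ is holomorphic at $\overrightarrow{s_0}$ if and only if $M(\overrightarrow{s_0})\neq 0$; moreover, since $\lambda(\overrightarrow{s_0},\ldots,\psi)$ itself is non-zero, in the holomorphic case $C_\psi(\overrightarrow{s_0})\neq 0$ automatically, which is the parenthetical claim of the corollary.

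Finally I must show that $M(\overrightarrow{s_0})\neq 0$ is equivalent to $L$ being $\psi$-generic. Because $A_{w_0}(\overrightarrow{s_0})$ surjects onto $L$, one has $M(\overrightarrow{s_0})\neq 0$ if and only if $\lambda(\overrightarrow{s_0}^{w_0},\ldots,\psi)|_L\neq 0$, which plainly exhibits a non-zero Whittaker functional on $L$; this handles the direction ``$C_\psi$ holomorphic $\Rightarrow$ $L$ is generic.'' For the converse, if $L$ is $\psi$-generic, its Whittaker functional $\lambda_L$ pulled back through $A_{w_0}(\overrightarrow{s_0})$ is a non-zero $\psi$-Whittaker functional on $I(\overrightarrow{s_0})$, hence a non-zero multiple of $\lambda(\overrightarrow{s_0},\ldots,\psi)$; a Rodier-type counting argument, adapted to the metaplectic cover via Banks' heredity, ensures that the one-dimensional Whittaker space of the standard module $I(\overrightarrow{s_0}^{w_0})$ is realized on its unique $\psi$-generic irreducible subquotient $L$, which forces $\lambda(\overrightarrow{s_0}^{w_0},\ldots,\psi)|_L\neq 0$ and hence $M(\overrightarrow{s_0})\neq 0$. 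The main obstacle is this last step, where one must rule out the pathological possibility that the Whittaker functional on $I(\overrightarrow{s_0}^{w_0})$ vanishes on a generic $L$; once the Rodier-type principle is in hand, everything else reduces to formal bookkeeping with \eqref{defn_local_coeff} and Szpruch's uniqueness.
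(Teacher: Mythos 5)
Your proposal is correct and follows the same route as the paper's proof: you compare $\lambda(\overrightarrow{s},\ldots,\psi)$ with $M(\overrightarrow{s})=\lambda(\overrightarrow{s}^{w_0},\ldots,\psi)\circ A_{w_0}$ at $\overrightarrow{s_0}$ through the defining relation \eqref{defn_local_coeff}, note that $M$ is holomorphic at $\overrightarrow{s_0}$, and reduce both directions to whether $M(\overrightarrow{s_0})\neq 0$, which is the genericity of $L$. The only place you expand on the paper is in spelling out the exactness/heredity argument behind ``$L$ generic $\Rightarrow \lambda(\overrightarrow{s_0}^{w_0},\ldots,\psi)|_L\neq 0$,'' a step the paper treats as self-evident.
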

\begin{proof} Note that, on the right-hand side of  \eqref{defn_local_coeff}, $\lambda (\overrightarrow{s}^{w_0},(\otimes_{i=1}^r\tau_i)^{w_0}\otimes \sigma,\psi)\circ A_{w_0}$ is holomorphic at $\overrightarrow{s}=\overrightarrow{s_0}.$ The image of $ A_{w_0}$ is exactly the Langlands quotient, so that $\lambda (\overrightarrow{s}^{w_0},(\otimes_{i=1}^r\tau_i)^{w_0}\otimes \sigma,\psi)\circ A_{w_0}$ is non-zero exactly in case this Langlands quotient is $\psi$-generic. Assume that Langlands quotient is $\psi$--generic; then  $C_{\psi}^{\widetilde{Sp(2n,F)}}(\widetilde{P},\overrightarrow{s},(\otimes_{i=1}^r\tau_i)\otimes \sigma,w_0)$ needs to be holomorphic in order for the left-hand side of \eqref{defn_local_coeff} to be holomorphic (and it always is). On the other hand, if we assume $C_{\psi}^{\widetilde{Sp(2n,F)}}(\widetilde{P},\overrightarrow{s},(\otimes_{i=1}^r\tau_i)\otimes \sigma,w_0)$ is holomorphic, and since  $\lambda (\overrightarrow{s}^{w_0},(\otimes_{i=1}^r\tau_i)^{w_0}\otimes \sigma,\psi)\circ A_{w_0}$ is holomorphic, for the right-hand side to be non-zero (since left-hand side is), both  the local coefficient and $\psi$-Whittaker functional have to be non-zero (on the image of $A_{w_0}$), and this is precisely the Langlands quotient, which is, thus, $\psi$-generic.
\end{proof}

\subsection{Cuspidal reducibilities}
We are now able to completely describe the cuspidal reducibilities for the metaplectic case, and, in case of reducibility, to describe which subquotient is generic. This result is not new  (part of it  is already present in \cite{MVW_Howe}, \cite{Hanzer_Muic_rank_one}; we also use the relation \eqref{gamma_metaplectic_gamma_classical}  above from \cite{Gan_Savin_Metaplectic_2012} and Atobe and Gan results (\cite{Atobe_Gan})), but we find it convenient to express these facts explicitly. The equivalence of the conditions expressed by  $L$--functions and by Langlands parameter in Proposition \ref{cuspidal_reducibility} is a consequence of Arthur's work (\cite{Arthur_endoscopic}) for classical groups. Note that the work \cite{Gan_Savin_Metaplectic_2012} enabled the Langlands classification for metaplectic group via the classification for odd-orthogonal groups, which fulfills a lot of expected properties.
\begin{prop}
\label{cuspidal_reducibility}
Let $\tau$ be an irreducible, self-dual, cuspidal representation of $GL(k,F)$ and $\sigma$ an irreducible, cuspidal and $\psi$--generic representation of $\widetilde{Sp(2n-2k,F)}.$ We consider an induced representation
\begin{equation}
\label{cusp_reduc}
\gamma_{\psi}^{-1}\nu^s\tau\rtimes \sigma,\;s\ge 0.
\end{equation}
\begin{enumerate}
\item Assume that the Langlands parameter $\phi$ of $\sigma$ does not contain $1\otimes S_2.$ 
Then $\Theta_{\psi}(\sigma)$ is a cuspidal representation.
\begin{enumerate}
\item Assume that the Langlands parameter of $\tau$ factorizes through $Sp(m,\C)$ and $\tau$ (we can consider it as a representation of $W_F$) does not occur as $\tau \otimes S_1$ in $\phi$ (this condition is equivalent to the requirement that neither  $L(2s,\tau,Sym^2)$ nor $L(s,\tau \times \Theta_{\psi}(\sigma))$  has a pole for $s=0$).
Then, the representation \eqref{cusp_reduc} reduces  for $s=0.$
\item Assume that the Langlands parameter of $\tau$ factorizes through $Sp(m,\C)$ and $\tau$  does  occur as $\tau \otimes S_1$ in $\phi$ (this condition is equivalent to the requirement that   $L(2s,\tau,Sym^2)$ does not have a pole for $s=0$ and  $L(s,\tau \times \Theta_{\psi}(\sigma))$ does have a pole for $s=0$).
Then, the representation \eqref{cusp_reduc} reduces  for $s=1.$ Then, the unique  $\psi$--generic subquotient of this representation is a subrepresentation.
\item Assume that the Langlands parameter of $\tau$ factorizes through $SO(m,\C)$ (this condition is equivalent to the condition that $L(2s,\allowbreak \tau,Sym^2)$ has a pole for $s=0$). 
Then, the representation \eqref{cusp_reduc} reduces  for $s=\frac{1}{2}.$ If $\tau=1_{GL(1,F)},$ the unique $\psi$--generic subquotient is the unique irreducible quotient of that representation; and if $\tau \neq 1_{GL(1,F)}$ the unique $\psi$--generic subquotient is a subrepresentation.
\end{enumerate}

\item Assume that the Langlands parameter $\phi$ of $\sigma$  contains $1\otimes S_2.$ 
Then $\Theta_{\psi}(\sigma)$ is a square-integrable representation, and the following holds
\[\Theta_{\psi}(\sigma)\hookrightarrow \nu^{\frac{1}{2}}\rtimes \Theta(\sigma,2n-1)|_{SO(2n+1,F)}.\]
Here $ \Theta(\sigma,2n-1)|_{SO(2n+1,F)}$ is an irreducible cuspidal generic representation.
\begin{enumerate}
\item Assume that the Langlands parameter of $\tau$ factorizes through $Sp(m,\C)$ and $\tau$ does not occur as $\tau \otimes S_1$ in $\phi-1\otimes S_2$  (this condition is equivalent to the requirement that neither  $L(2s,\tau,Sym^2)$ nor $L(s,\tau \times \Theta(\sigma,2n-1)|_{SO(2n+1,F)})$  has a pole for $s=0$).
Then, the representation \eqref{cusp_reduc} reduces  for $s=0.$
\item Assume that the Langlands parameter of $\tau$ factorizes through $Sp(m,\C)$ and $\tau$ does  occur as $\tau \otimes S_1$ in $\phi-1\otimes S_2$ (this condition is equivalent to the requirement that   $L(2s,\tau,Sym^2)$ does not have a pole for $s=0$ and  $L(s,\tau \times \Theta(\sigma,2n-1)|_{SO(2n+1,F)})$ does have a pole for $s=0$).
Then, the representation \eqref{cusp_reduc} reduces  for $s=1$ and the unique  $\psi$--generic subquotient of this representation is a subrepresentation.

\item Assume that the Langlands parameter of $\tau$ factorizes through $SO(m,\C)$ (this condition is equivalent to the condition that $L(2s,\allowbreak \tau,Sym^2)$ has a pole for $s=0$) and that $\tau\neq 1_{GL(1,F)}.$
Then, the representation \eqref{cusp_reduc} reduces  for $s=\frac{1}{2}$ and the unique $\psi$--generic subquotient is a subrepresentation.

\item Assume that $\tau=1_{GL(1,F)}.$
Then, the representation \eqref{cusp_reduc} reduces  for $s=\frac{3}{2}$ and the unique $\psi$--generic subquotient is a subrepresentation.
\end{enumerate}
\end{enumerate}
\end{prop}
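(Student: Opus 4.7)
The plan is to reduce the entire reducibility question to a calculation on the orthogonal side via the local coefficient, for which everything is already known through Shahidi's theory and the Arthur/Moeglin--Tadi\'c classification for $SO(2n+1,F)$. The bridge is formula \eqref{C_factor_gamma_factors} together with Corollary \ref{transfer_gamma_factors}: since $\sigma$ is cuspidal (hence tempered) and $\psi$-generic, the local coefficient attached to $\gamma_\psi^{-1}\nu^s\tau\rtimes\sigma$ is, up to the exponential factor $c_F(s)$,
\[
\frac{\gamma(\tau,Sym^2,2s,\psi)}{\gamma(\tau,s+\tfrac12,\psi)}\,\gamma(\Theta_\psi(\sigma)\times \tau,s,\psi).
\]
The standard module $\gamma_\psi^{-1}\nu^s\tau\rtimes\sigma$ for $s>0$ reduces precisely at the $s_0$ where the long intertwining operator fails to be an isomorphism, and this is captured by the pole/zero structure of the expression above. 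Corollary \ref{generic_Langlands_quotient} then tells us \emph{which} constituent is $\psi$-generic: it is the Langlands quotient exactly when the local coefficient is holomorphic (and necessarily non-zero) at $s_0$, and the subrepresentation otherwise.

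First I would handle the dichotomy provided by Gan--Savin in Proposition~\ref{irreducibility_temp_lift} and its refinement in \cite{Gan_Savin_Metaplectic_2012}: $\Theta_\psi(\sigma)$ is cuspidal exactly in case (1) (i.e.\ when $1\otimes S_2\not\subset\phi$); otherwise it is the discrete series embedded in $\nu^{1/2}\rtimes\Theta(\sigma,2n-1)|_{SO(2n+1,F)}$ with the latter cuspidal and generic. This distinction is precisely the reason that in case (2) the relevant $L$-function $L(s,\tau\times\Theta(\sigma,2n-1)|_{SO(2n+1,F)})$ replaces $L(s,\tau\times\Theta_\psi(\sigma))$ and the condition on the parameter is expressed in terms of $\phi-1\otimes S_2$.

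Next I would run the standard $L$-function pole analysis: $L(2s,\tau,Sym^2)$ has a pole at $s=0$ iff the parameter of $\tau$ factors through $SO(m,\mathbb{C})$, while $L(s,\tau\times\Theta_\psi(\sigma))$ has a pole at $s=0$ iff $\tau$ occurs in the parameter of $\Theta_\psi(\sigma)$. Feeding this into the displayed formula, the pole of the metaplectic local coefficient for $s>0$ lands on $s_0=1/2$ in the orthogonal-$\tau$ case, on $s_0=0$ in the symplectic-$\tau$ / $\tau$-absent case, and on $s_0=1$ in the symplectic-$\tau$ / $\tau$-present case, matching items (1a)--(1c) and (2a)--(2c). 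In each case, the holomorphy/pole of $C_\psi$ at $s_0$ read off from the same formula tells us via Corollary \ref{generic_Langlands_quotient} whether the $\psi$-generic constituent is the quotient or the subrepresentation; the claimed ``subrepresentation is generic'' answer in (1b), (1c with $\tau\neq 1$), (2b), (2c) is then automatic, while (1c) for $\tau=1_{GL(1,F)}$ requires noting that a spurious pole cancels and the Langlands quotient is the one carrying the Whittaker functional.

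The main obstacle is case (2d): the trivial character case in the ``$1\otimes S_2$ present'' branch. Here the formal recipe would want to place reducibility at $s=1/2$, but that point is forced to be irreducible because $1\otimes S_2$ is already in $\phi$ (so adding a further copy of $1\otimes S_2$ produces a multiplicity, not a new discrete series). The reducibility instead jumps to $s=3/2$, corresponding to the creation of the Jordan block $1\otimes S_4$. To justify this rigorously I would use the embedding $\Theta_\psi(\sigma)\hookrightarrow \nu^{1/2}\rtimes \Theta(\sigma,2n-1)|_{SO(2n+1,F)}$ from Gan--Savin, combined with Frobenius reciprocity and the known cuspidal reducibility on the orthogonal side for the \emph{cuspidal} piece $\Theta(\sigma,2n-1)|_{SO(2n+1,F)}$ at the trivial character; transferring back through \eqref{gamma_metaplectic_gamma_classical} then pins the metaplectic reducibility at $s=3/2$. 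Identifying the $\psi$-generic subquotient as a subrepresentation in (2d) again follows from the residual (non)holomorphy analysis of $C_\psi$ at $s_0=3/2$.
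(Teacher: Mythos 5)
Your plan runs on the same track as the paper --- local coefficients via \eqref{C_factor_gamma_factors}, transfer of $\gamma$-factors via Corollary~\ref{transfer_gamma_factors}, identification of the generic constituent via Corollary~\ref{generic_Langlands_quotient} --- but it mislabels the criterion for reducibility. For cuspidal inducing data, reducibility of $\gamma_{\psi}^{-1}\nu^s\tau\rtimes\sigma$ at $s_0>0$ is controlled by whether the Plancherel measure
\[\mu(s,\tau\otimes\sigma,\psi)=C_{\psi}^{\widetilde{Sp(2n,F)}}(\widetilde{P},s,\tau\otimes \sigma,w_0)\,C_{\psi}^{\widetilde{Sp(2n,F)}}(\widetilde{P},-s,\widetilde{\tau}\otimes \sigma,w_0)\]
has a pole at $s_0$, while reducibility at $s_0=0$ is equivalent to $\mu(0)\neq 0$; a single local coefficient $C_\psi(s)$ does not by itself detect reducibility, since its poles and zeros must be weighed against those of $C_\psi(-s)$. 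Your sentence placing a ``pole of the metaplectic local coefficient for $s>0$'' at $s_0=0$ in case (1a) is symptomatic: that case is not about a pole at all, but about nonvanishing of $\mu$ at the origin. Once the Plancherel criterion is made explicit the $L$-function computations you describe do give the stated reducibility points, so this is a fixable omission, but it is an omission of the identity that the paper's proof actually rests on.

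The second gap is in your treatment of (2d). The paper handles all of case (2) at once by applying Shahidi's multiplicativity of $\gamma$-factors to the embedding $\Theta_{\psi}(\sigma)\hookrightarrow\nu^{1/2}\rtimes\sigma'$, where $\sigma'$ denotes the cuspidal generic representation $\Theta(\sigma,2n-1)$ restricted to the special orthogonal group; for $\tau=1_{GL(1,F)}$ this factors $\gamma(\Theta_{\psi}(\sigma)\times\tau,s,\psi)$ as $\gamma(s+\tfrac{1}{2},\psi)\,\gamma(s-\tfrac{1}{2},\psi)\,\gamma(\sigma'\times\tau,s,\psi)$, and the pole of the Tate factor $\gamma(s-\tfrac{1}{2},\psi)$ is precisely what pushes the reducibility point out to $s_0=3/2$. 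Your alternative --- Frobenius reciprocity against the cuspidal reducibility of $\sigma'$ plus the $\gamma$-factor transfer --- is not written out, and as phrased it is unclear how it would produce $3/2$ rather than merely confirm it after the fact; the Moeglin--Tadi\'c heuristic about $1\otimes S_2$ already being present in $\phi$ correctly predicts the answer but is not itself a proof. You should simply apply multiplicativity, which handles (2b), (2c) and (2d) uniformly and is the step the paper uses.
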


\begin{rem} 1. Note that in Proposition \ref{cuspidal_reducibility}, in situation in which the Langlands parameter $\phi$ of $\sigma$ does not contain $1\otimes S_2$, possible reducibilities are generic (in the sense used for the classical groups), i.e.~$0,\frac{1}{2},1$  but it may happen that the generic subquotient is not a subrepresentation of a standard representation, but its Langlands quotient. Thus, the \textbf{standard module conjecture} and \textbf{the generalized injectivity conjecture} fail for the metaplectic groups already in this very simple case.

\noindent 2. If the Langlands parameter $\phi$ of $\sigma$ does  contain $1\otimes S_2,$ although the representation $\sigma$ is generic, a case of a "non-generic reducibility"  at $\frac{3}{2}$ may occur.
\end{rem}

\begin{proof}
(of Proposition \ref{cuspidal_reducibility}) In the same way as for the classical groups, as a composition of two intertwining operators, one can define the Plancherel measure for metaplectic groups (cf.~\cite{Gan_Savin_Metaplectic_2012}, section 10). The behavior of Plancherel measure when inducing from supercusupidals totally governs the reducibility of the induced representation. We have reducibility for $s_0=0$ if and only if $\mu(s,\tau\otimes \sigma,\psi)\neq 0$ for $s=0$ (recall that $\tau\cong \widetilde{\tau}$), and reducibility for $s_0>0$ if and only if the Plancherel measure has a pole for $s=s_0.$ Since we are in the generic case, we can express the Plancherel measure as follows (cf.~\cite{Szpruch_PhD}, sections 4 and 10):
\[\mu(s,\tau\otimes \sigma,\psi)=C_{\psi}^{\widetilde{Sp(2n,F)}}(\widetilde{P},s,\tau\otimes \sigma,w_0)C_{\psi}^{\widetilde{Sp(2n,F)}}(\widetilde{P},-s,\widetilde{\tau}\otimes \sigma,w_0).\]
Now we express the local factors above in terms of $\gamma$--factors as in \eqref{C_factor_gamma_factors}, and $\gamma$--factors in terms of $L$--factors in a usual way, as in \cite{Shahidi_certain}. This readily gives the reducibility points in case 1).
In case 2), we have to use the multiplicativity of $\gamma$--factors (with respect to $\Theta_{\psi}(\sigma)\hookrightarrow \nu^{\frac{1}{2}}\rtimes \Theta(\sigma,2n-1)|_{SO(2n+1)}$) to obtain the claims (for  the multiplicativity, cf.~\cite{Shahidi_multiplicativity}).
Now we can deduce the information about the position of the generic subquotient by analyzing the holomorphicity of $C_{\psi}^{\widetilde{Sp(2n,F)}}(\widetilde{P},s,\tau\otimes \sigma,w_0)$ at the point of reducibility $s=s_0$, as shown in Corollary \ref{generic_Langlands_quotient}.
\end{proof}

Now we give an example where on the same induced representation we can observe Whittaker models with respect to different Whittaker functionals.

\begin{exmp}
\label{primjer}
Let $m\in \N_0.$ Let $\mu_0$ be the non-trivial character of the group $\mu_2$ (cf.~\eqref{egzaktni}), which we view as a representation of $\widetilde{Sp(0,F)}.$ Let $a\in F^*\setminus F^{*,2}.$ Then, the representation
\[\gamma_{\psi}^{-1}\delta([\nu^{\frac{1}{2}},\nu^{m+ \frac{1}{2}}])\rtimes \mu_0 \]
is  both $\psi$-- and $\psi_a$--generic. It has  the length two and the irreducible $\psi_a$--generic subquotient is a (square-integrable) subrepresentation which we denote by $St_{\psi,m+1}$ and the Langlands quotient is the $\psi$--generic subquotient.

\smallskip

\noindent Indeed, the representation 
\[\gamma_{\psi}^{-1}(\nu^{m+ \frac{1}{2}}\times \nu^{m- \frac{1}{2}}\times \cdots \times \nu^{\frac{1}{2}})\rtimes \mu_0\]
is regular, and is of the length $2^{m+1}$ (similarly as in the algebraic case, by Casselman, also cf.~\cite{Tadic_regular} and \cite{Hanzer_Muic_metaplectic_Jacquet} for the metaplectic case), so it easily follows that the representation in question is of the length two, and the unique subrepresentation is the Steinberg representation. Also, since $\mu_0$ is trivially generic (both with respect to $\psi$ and $\psi_a$), we get that the representation in question also is.  In the Siegel case the expression \eqref{C_factor_gamma_factors} becomes
\begin{equation}
\label{lokalni_koeficijent_psi}
C_{\psi}^{\widetilde{Sp(2n,F)}}(\widetilde{P},s,\delta([\nu^{\frac{1}{2}},\nu^{m+ \frac{1}{2}}])\otimes \mu_0,w_0)=c_F(s) \frac{\gamma(\delta([\nu^{\frac{1}{2}},\nu^{m+ \frac{1}{2}}]),Sym^2,2s,\psi)}{\gamma(\delta([\nu^{\frac{1}{2}},\nu^{m+ \frac{1}{2}}]),s+ \frac{1}{2},\psi)}
\end{equation}
and if we want to calculate the local coefficient for the $\psi_a$-Whittaker functional, by \eqref{comparison}, we have
\begin{equation}
\label{lokalni_koeficijent_psi_a}
\begin{split}
C_{\psi_a}^{\widetilde{Sp(2n,F)}}(\widetilde{P},s,&\delta([\nu^{\frac{1}{2}},\nu^{m+ \frac{1}{2}}]) \chi_a \otimes \mu_0,w_0)= \\
& c_{F,a}(s) \frac{\gamma(\delta([\nu^{\frac{1}{2}},\nu^{m+ \frac{1}{2}}]) \chi_a,Sym^2,2s,\psi_a)}{\gamma(\delta([\nu^{\frac{1}{2}},\nu^{m+ \frac{1}{2}}])\chi_a,s+ \frac{1}{2},\psi_a)}.
\end{split}
\end{equation}
Now we use the results of Shahidi \cite{Shahidi_Twisted_endoscopy} to expand these $\gamma$--factors. In the case of \eqref{lokalni_koeficijent_psi}, both the numerator and denominator have a pole of the first order at $s=0,$ so that, in the end, the local coefficient is holomorphic at $s=0.$ In the case of \eqref{lokalni_koeficijent_psi_a} the numerator is the same as in the previous case (thus has a pole at $s=0$), but the denominator is holomorphic, so the local coefficient has a pole for $s=0.$ Now the conclusion follows from Corollary \ref{generic_Langlands_quotient}.
\end{exmp}
\begin{rem}
1. Note that in the case if classical groups, if a standard representation has a square-integrable subquotient, then all the generic subquotients are necessarily square-integrable (Proposition 1.1 of \cite{Muic_proof_Casselman_Shahidi}), which is clearly not the case in the example above.

\noindent 2. We shall see that each instance of the failure of the standard module conjecture and generalized injectivity conjecture for metaplectic groups  originates essentially from the situation of this example.
\end{rem}

\subsection{$\psi$-generic representations of metaplectic groups}

We are  now  ready to describe the form of $\psi$-generic representations of $\widetilde{Sp(2n,F)}.$

In his paper \cite{Atobe_gen}, Theorem 3.11, Atobe gives a condition characterizing the L-parameter of a $\psi$-generic Langlands quotient in terms of the L-function attached to this L-parameter. He obtained his results using Szpruch's theory of local coefficients for the metaplectic group, similarly to the use of those coefficients in this section. We reprove this result, but we express it  in terms of the conditions on the inducing representation for  this Langlands quotient and related generalized rank one reducibilities. This suites our needs when calculating theta lifts in the rest of the paper. This form of the condition for $\psi$-genericity (and our proof) is also easy to apply to answer a question about preservation of genericity under the mapping $\Theta_{\psi},$ cf. Corollary \ref{cor_transfer_gen}.

We also completely characterize the reducibility of a standard $\psi$-generic representation with a $\psi$--generic Langlands quotient, thus completing Atobe's result in  Theorem 3.13 of  \cite{Atobe_gen}, which only gives a sufficient condition. This is done in Theorem \ref{standard_module_reducibility}.
\begin{thm} 
\label{generic_reps_metaplectic}
Let  $\delta_i,\;i=1,2,\ldots,k$ be irreducible unitarizable square integrable representations of $GL(n_i,F),$ $\sigma$ a tempered $\psi$--generic representation  of $\widetilde{Sp(2n_0,F)}$ and $\overrightarrow{s_0}=(s_1,s_2,\ldots,s_k)$ satisfy $s_1\ge  s_2\ge \cdots\ge s_k>0$ so that for $\overrightarrow{s}=\overrightarrow{s_0}$ the representation
\[\gamma_{\psi}^{-1}\delta_1\nu^{s_1}\times \gamma_{\psi}^{-1}\delta_2\nu^{s_2}\times \cdots\times \gamma_{\psi}^{-1}\delta_k\nu^{s_k}\rtimes \sigma\]
 is standard. Then, the Langlands quotient $L(\gamma_{\psi}^{-1}\delta_1\nu^{s_1},\dotsc,\gamma_{\psi}^{-1}\delta_k\nu^{s_k};\sigma)$ is $\psi$--generic if and only if for each $i,j\in\{1,2,\ldots,k\},\;i\neq j$
 \begin{equation}
\label{GL_conditions}
 \delta_i\nu^{s_i}\times \delta_j\nu^{s_j} \text{ and } \delta_i\nu^{s_i}\times \widetilde{\delta_j}\nu^{-s_j}
 \end{equation}
 is irreducible and the following holds:
 \begin{itemize}
\item[(i)] if, for    $i\in\{1,2,\ldots,k\},$ $\delta_i\nu^{s_i}$ is not equal to $\delta([\nu^{1/2},\nu^{m+1/2}])$ for some $m\in \N_0,$ (actually for $2s_i-1$) then $\gamma_{\psi}^{-1}\delta_i\nu^{s_i}\rtimes \sigma$ is irreducible.
\item [(ii)] if, for  $i\in\{1,2,\ldots,k\},$ $\delta_i\nu^{s_i}=\delta([\nu^{1/2},\nu^{m+1/2}])$ for some $m\in \N_0,$ then, for each even $a$ such that $1\otimes S_a$ occurs in the L-parameter of $\sigma,$ we have $a\ge 4s_i.$
\end{itemize}
Note that because of \eqref{GL_conditions}, there can be at most one occurrence of a representation of the form  $\delta([\nu^{1/2},\nu^{m+1/2}])$ among $\delta_1\nu^{s_1},\ldots,\delta_k\nu^{s_k}.$
\end{thm}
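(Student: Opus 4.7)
The plan is to invoke Corollary \ref{generic_Langlands_quotient}, reducing the genericity of the Langlands quotient $L(\gamma_\psi^{-1}\delta_1\nu^{s_1},\dotsc,\gamma_\psi^{-1}\delta_k\nu^{s_k};\sigma)$ to the holomorphy of $C_\psi^{\widetilde{Sp(2n,F)}}(\widetilde{P},\overrightarrow{s},(\otimes_i \gamma_\psi^{-1}\delta_i\nu^{s_i})\otimes \sigma,w_0)$ at $\overrightarrow{s}=\overrightarrow{s_0}$. Decomposing the longest coset representative $w_0$ into simple reflections, Szpruch's multiplicativity of local coefficients (section~6 of \cite{Szpruch_PhD}) expresses this coefficient as a product of rank-one local coefficients. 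The factors split into two families: (a) GL--GL crossings that swap $\gamma_\psi^{-1}\delta_i\nu^{s_i}$ with $\gamma_\psi^{-1}\delta_j\nu^{\pm s_j}$, and (b) Siegel-type crossings that carry a single $\gamma_\psi^{-1}\delta_i\nu^{s_i}$ past $\sigma$.

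For family (a), the rank-one coefficient is (up to a non-vanishing exponential) a quotient of Rankin--Selberg $\gamma$-factors for $GL\times GL$, whose analytic behavior is governed by the reducibility of $\delta_i\nu^{s_i}\times\delta_j\nu^{s_j}$ and $\delta_i\nu^{s_i}\times\widetilde{\delta_j}\nu^{-s_j}$; simultaneous holomorphy over all pairs is thus equivalent to \eqref{GL_conditions}. For family (b), formula \eqref{C_factor_gamma_factors} gives the rank-one coefficient attached to $\delta_i\nu^{s_i}$ as $c_F(s)\,\gamma(\delta_i\nu^{s_i},\mathrm{Sym}^2,2s,\psi)\,\gamma(\sigma\times\delta_i\nu^{s_i},s,\psi)/\gamma(\delta_i\nu^{s_i},s+\tfrac12,\psi)$, to be evaluated at $s=s_i$. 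Applying Shahidi's multiplicativity to each $\gamma$-factor and using the segment structure of $\delta_i$ one reads off the pole contributions cuspidal-by-cuspidal.

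In case (i), when $\delta_i\nu^{s_i}\neq\delta([\nu^{1/2},\nu^{m+1/2}])$, the $\mathrm{Sym}^2$ numerator and the $s+\tfrac12$ denominator yield no net pole at $s_i$, and holomorphy of the factor reduces to the regularity of the corresponding Plancherel measure for $\gamma_\psi^{-1}\delta_i\nu^{s_i}\rtimes\sigma$. As in the proof of Proposition \ref{cuspidal_reducibility}, the Plancherel measure detects reducibility, so this is equivalent to irreducibility of $\gamma_\psi^{-1}\delta_i\nu^{s_i}\rtimes\sigma$.

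The main difficulty is case (ii), where $\delta_i\nu^{s_i}=\delta([\nu^{1/2},\nu^{m+1/2}])$ with $m=2s_i-1$. Here the $\mathrm{Sym}^2$ numerator does acquire a pole at $s=s_i$ while the denominator remains holomorphic (compare the computation of \eqref{lokalni_koeficijent_psi} in Example \ref{primjer}), so holomorphy of the full coefficient forces $\gamma(\sigma\times\delta_i\nu^{s_i},s_i,\psi)$ to carry a compensating zero. Using Corollary \ref{transfer_gamma_factors} (first reducing to the discrete series case when $\sigma$ is tempered but non-discrete, by inducing from its tempered support) we identify this $\gamma$-factor with $\gamma(\Theta_\psi(\sigma)\times\delta_i\nu^{s_i},s_i,\psi)$ on the split odd orthogonal side. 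A case analysis of its $L$-factor expansion, based on the embedding \eqref{embedding_discrete} and the $L$-parameter of $\Theta_\psi(\sigma)$ supplied by Gan--Savin, shows that the needed compensating zero is present precisely when every even $a$ with $1\otimes S_a\hookrightarrow\phi_\sigma$ satisfies $a\ge 4s_i$; this is the hardest step and demands careful tracking of which trivial-character Jordan blocks of $\sigma$ interact with the half-integer segment $\delta([\nu^{1/2},\nu^{m+1/2}])$. The closing remark is then immediate: two Steinberg-half-integer blocks would produce a reducible $\delta_i\nu^{s_i}\times\widetilde{\delta_j}\nu^{-s_j}$, contradicting \eqref{GL_conditions}.
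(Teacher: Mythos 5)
Your overall architecture matches the paper (reduce via Corollary \ref{generic_Langlands_quotient} to holomorphy of the local coefficient, factor it into rank-one pieces, and analyze the GL--GL and Siegel-type factors separately), but the crux of case (ii) contains an error that inverts the required conclusion.

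You assert that when $\delta_i\nu^{s_i}=\delta([\nu^{1/2},\nu^{m+1/2}])$ the $\mathrm{Sym}^2$ numerator has a pole while "the denominator remains holomorphic", and conclude that $\gamma(\sigma\times\delta_i\nu^{s_i},\cdot,\psi)$ must carry a compensating zero. This is not what Example \ref{primjer} says for \eqref{lokalni_koeficijent_psi} --- you have mixed up the $\psi$ and $\psi_a$ computations. In the $\psi$ case (which is the relevant one here), \emph{both} $\gamma(\delta_i\nu^{s_i},\mathrm{Sym}^2,2s,\psi)$ and $\gamma(\delta_i\nu^{s_i},s+\tfrac12,\psi)$ have a first-order pole at $s=0$: the pole of the denominator comes from $L(\tfrac12-s,\widetilde{\delta_i}\nu^{-s_i})$, which has a pole exactly when $\delta_i\nu^{s_i}$ is of Steinberg half-integer form. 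Thus $\frac{\gamma(\cdot,\mathrm{Sym}^2,2s,\psi)}{\gamma(\cdot,s+\frac12,\psi)}$ is holomorphic (the pole of the numerator cancels the zero of the reciprocal denominator), and the holomorphy constraint on the local coefficient becomes simply that $\gamma(\sigma\times\delta_i\nu^{s_i},s,\psi)$ be holomorphic at $s=0$. No compensating zero is needed, and in fact none can occur: for tempered $\Theta_\psi(\sigma)$ and $s_i>0$, the standard expansion of $\gamma(\Theta_\psi(\sigma)\times\delta_i\nu^{s_i},s,\psi)$ in terms of $L$-factors has a denominator that is holomorphic and non-vanishing at $s=0$, so the $\gamma$-factor can have poles but never a zero there. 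Under your reading, the local coefficient would always have a pole in case (ii), forcing the theorem to be vacuous precisely where the interesting condition $a\ge 4s_i$ is supposed to emerge. The condition $a\ge 4s_i$ actually comes from demanding \emph{holomorphy} (not vanishing) of $\gamma(\Theta_\psi(\sigma)\times\delta_i\nu^{s_i},s,\psi)$, which the paper unwinds through multiplicativity and the Jordan blocks of $\Theta_\psi(\sigma)$.

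There is a smaller soft spot in your case (i): you assert that holomorphy of the rank-one factor "reduces to the regularity of the corresponding Plancherel measure," but the Plancherel measure is the product $C(s)C(-s)$ and its regularity is not directly the same as holomorphy of the single local coefficient. The paper instead observes that $\gamma(\delta_i\nu^{s_i},\mathrm{Sym}^2,2s,\psi)\gamma(\Theta_\psi(\sigma)\times\delta_i\nu^{s_i},s,\psi)$ is, up to an exponential, the local coefficient for $\delta_i\nu^{s_i}\rtimes\Theta_\psi(\sigma)$ on the odd orthogonal side, applies the standard module conjecture there to get irreducibility, and then transports this across theta correspondence via Lemma \ref{cor5.3} (Kudla's filtration) to conclude that $\gamma_\psi^{-1}\delta_i\nu^{s_i}\rtimes\sigma$ is irreducible. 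That transfer step is essential and cannot be replaced by a one-line Plancherel-measure remark.
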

Before proving the theorem, we state a useful lemma based on Kudla's filtration (we use the notation introduced at the end of Section \ref{subs:thetaintro}).
\begin{lem}
\label{cor5.3}
Let $\sigma$ be an irreducible representation of $\widetilde{Sp(2n,F)}$ or $O(V)$. Let $\delta$ be an irreducible essentially discrete series representation of $GL(k,F)$ such that $\delta \ncong St_k\nu^\frac{l-k}{2}$. If $\pi$ is an irreducible representation such that $\delta \rtimes \sigma \twoheadrightarrow \pi$, then $\delta \rtimes \Theta_l(\sigma) \twoheadrightarrow \Theta_l(\pi)$.
\end{lem}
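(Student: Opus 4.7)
The plan is first to reduce, using right exactness of the full theta lift, to the surjection statement with $\sigma$ in the tail. Realising $\Theta_l$ as coinvariants of $\omega \otimes (\cdot)^\vee$ (for $\omega$ the relevant Weil representation) shows that $\Theta_l$ is right exact on smooth representations. Applied to the given $\delta \rtimes \sigma \twoheadrightarrow \pi$, this gives $\Theta_l(\delta \rtimes \sigma) \twoheadrightarrow \Theta_l(\pi)$, so it suffices to establish
\[\delta \rtimes \Theta_l(\sigma) \twoheadrightarrow \Theta_l(\delta \rtimes \sigma). \qquad (\star)\]
By the symmetry of the dual pair, we may treat one of the two cases in the hypothesis; I would write the argument with $\sigma$ a representation of $\widetilde{Sp(2n,F)}$, the orthogonal case being parallel.

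To prove $(\star)$ I would invoke Kudla's filtration. Setting $m = 2(n+k) - l + 1$, so that $\omega = \omega_{m,2(n+k),\psi}$ lives on the pair $(\widetilde{Sp(2(n+k),F)}, O(V_m))$, take the Jacquet functor $R_{\tilde{P}_k}$ along the parabolic of $\widetilde{Sp(2(n+k),F)}$ with Levi $\widetilde{GL(k,F)} \times \widetilde{Sp(2n,F)}$. Kudla's filtration presents $R_{\tilde{P}_k}(\omega)$ as an iterated extension indexed by $0 \le j \le k$: the top piece $(j=0)$ is essentially $\omega_{m,2n,\psi}$ tensored with a $\widetilde{GL(k,F)}$-module built from induction from $P_k \subset O(V_m)$, while the lower pieces $(j \ge 1)$ involve the smaller Weil representations $\omega_{m-2j,2n,\psi}$ tensored with progressively more degenerate $\widetilde{GL(k,F)}$-modules, all built (up to the appropriate $\gamma_\psi$-twist, cf.~Section \ref{sec_preli}) from characters of the form $\nu^{(l-k)/2}$. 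Taking $(\delta^\vee \otimes \sigma^\vee)$-coinvariants on the Levi and applying Frobenius reciprocity on the orthogonal side turns this into a filtration of $\Theta_l(\delta \rtimes \sigma)$ whose top quotient is precisely $\delta \rtimes \Theta_l(\sigma)$.

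The main obstacle, and the essential use of the hypothesis $\delta \ncong St_k \nu^{(l-k)/2}$, is to show that every piece with $j \ge 1$ dies upon pairing with $\delta^\vee$ on the $\widetilde{GL(k,F)}$-factor. For each such $j$, the relevant $GL(k,F)$-factor in the successive quotient is an explicit standard representation whose unique irreducible essentially square-integrable sub- or quotient-representation is precisely $St_k \nu^{(l-k)/2}$ (this being exactly the shape forced by Kudla's dimension count combined with the $\gamma_\psi$-normalisation of the genuine $\widetilde{GL}$-characters). The excluded case is thus exactly the obstruction, and under our assumption all $j \ge 1$ pieces vanish, leaving only the top piece, which yields $(\star)$. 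The hard part will be the precise bookkeeping of twists in the metaplectic Kudla filtration --- in particular verifying that the exceptional character is really $\nu^{(l-k)/2}$ after incorporating the $\gamma_\psi$-factor; this parallels computations already done by Mui\'c in \cite{muic2004howe, Muic_theta_discrete_Israel} for classical groups, with the metaplectic bookkeeping handled via the formalism recalled in Section~\ref{sec_preli} and (\ref{comparison}).
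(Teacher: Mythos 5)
Your overall strategy (Kudla's filtration of $R_{\widetilde{P}_k}(\omega)$ along the Levi $GL(k)\times \widetilde{Sp(2n)}$, with the hypothesis $\delta\ncong St_k\nu^{(l-k)/2}$ killing the unwanted graded pieces) is indeed the mechanism behind this lemma; the paper itself simply cites Corollary 5.3 of Atobe--Gan, whose proof is exactly a careful version of this. However, as written your sketch has two genuine gaps. First, the reduction step fails for variance reasons: the functor $\Pi\mapsto(\omega\otimes\Pi^{\vee})_{G}$ is \emph{contravariant} in $\Pi$ (a surjection $\delta\rtimes\sigma\twoheadrightarrow\pi$ only gives $\pi^{\vee}\hookrightarrow(\delta\rtimes\sigma)^{\vee}$, and coinvariants applied to an injection give nothing), so ``right exactness'' does not yield $\Theta_l(\delta\rtimes\sigma)\twoheadrightarrow\Theta_l(\pi)$. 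To run a right-exactness argument you must drop the contragredient, which computes the lift of $\pi^{\vee}$ rather than of $\pi$, and then convert back using the MVW involution (or instead work throughout with $\Hom_{G}(\omega,\,\cdot\,)$, Frobenius reciprocity and dualize at the end). The MVW/dualization step is a real ingredient of the proof and is absent from your plan.

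Second, and more seriously, your analysis of the filtration is inverted. In Kudla's filtration the piece whose $GL(k)$-factor is a mere character $\nu^{x}\circ\det$ tensored with $\omega_{m,2n}$ is the \emph{top quotient} $J^0$, while the piece built by induction from the parabolic $Q_k\subset O(V_m)$, carrying the regular representation $\mathcal{S}(GL(k,F))$ and the smaller Weil representation $\omega_{m-2k,2n}$, is the \emph{deepest} piece $J^k$ (compare the paper's own description in the proof of Lemma \ref{lem:really_small_theta}, where $J^0$ is the character piece and $J^1$ the Schwartz piece). Since $\Theta_l(\sigma)$ lives on $V_{m-2k}$, i.e.\ comes from the dual pair with Weil representation $\omega_{m-2k,2n}$, the representation $\delta\rtimes\Theta_l(\sigma)$ can only arise from $J^k$, not from $J^0$; and conversely the pieces that must die under $\delta\ncong St_k\nu^{(l-k)/2}$ are $J^0,\dotsc,J^{k-1}$, each of which contains a one-dimensional block $\nu^{x}\circ\det$ on some $GL(k-j)$ that an essentially square-integrable $\delta$ can match only in the excluded Steinberg case. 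Your claim that ``every piece with $j\geq 1$ dies upon pairing with $\delta^{\vee}$'' is false --- the $j=k$ piece pairs non-trivially with every $\delta$, and it must, since it is precisely the source of the asserted surjection. This also undermines the ``top quotient survives'' framing: because the surviving piece sits at the subrepresentation end, one needs the vanishing of the other pieces (via the Jacquet modules of $\delta$ against the degenerate character blocks) to promote it to a quotient, which is how the argument of Atobe--Gan, and the paper's remark on extending it to $l\leq 0$, actually proceeds.
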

\begin{proof}
This is Corollary 5.3 of \cite{Atobe_Gan}. Note that the original statement requires $l>0$, but it can easily be extended to any $l$ by examining the isotypic components in Kudla's filtration of the Jacquet modules of the Weil representation, as in the proof of Lemma 3.3 of \cite{Hanzer_R_metaplectic}.
\end{proof}

\begin{proof} (of Theorem) By grouping equal $s_i$'s together, we can write down the standard representation $\gamma_{\psi}^{-1}\delta_1\nu^{s_1}\times \gamma_{\psi}^{-1}\delta_2\nu^{s_2}\times \cdots\times \gamma_{\psi}^{-1}\delta_k\nu^{s_k}\rtimes \sigma$ as 
\[\Pi:=\tau_1\nu^{s_1'}\times \tau_2\nu^{s_2'}\times \cdots \tau_l\nu^{s_l'}\rtimes \sigma,\]
where $s_1'>s_2'>\cdots >s_l',\;\{s_1',s_2,'\ldots,s_l'\}\subset \{s_1,s_2,\ldots,s_k\},\;l\le k$ and $\tau_i$ are irreducible tempered representations. Now,  the long intertwining operator  $A_{w_0}$ (cf.~ Corollary \ref{generic_Langlands_quotient}) attached to representation $\Pi$ is holomorphic and non-zero. It is,
in a well-known way (e.g. \cite{Shahidi_certain}), composed from the (again holomorphic and non-zero) intertwining operators induced from the ones in the generalized rank-one case
\begin{align}
\label{rank_one_operators}
&\gamma_{\psi}^{-1}(\tau_i\nu^{s_i'}\times \tau_j\nu^{s_j'})\to \gamma_{\psi}^{-1}(\tau_j\nu^{s_j'}\times \tau_i\nu^{s_i'}),\\ \notag
&\gamma_{\psi}^{-1}(\tau_j\nu^{s_j'}\times \widetilde{\tau_i}\nu^{-s_i'})\to  \gamma_{\psi}^{-1}(\widetilde{\tau_i}\nu^{-s_i'}\times \tau_j\nu^{s_j'})\\ \
&\gamma_{\psi}^{-1}\tau_i\nu^{s_i'}\rtimes \sigma \to \widetilde{\tau_i}\nu^{-s_i'}\rtimes \sigma, \notag
\end{align}
where $i<j.$ The image of the operator $A_{w_0}$ is exactly the Langlands quotient of $\Pi,$ which we assumed to be $\psi$-generic.
Thus, each subquotient of the  kernel of $A_{w_0}$ is degenerate. Then, it follows that all the  generalized rank one operators in \eqref{rank_one_operators} have to have degenerate kernels. They are standard intertwining operators whose images are corresponding Langlands quotients, which are, thus, $\psi$--generic. In the first two cases in \eqref{rank_one_operators} we are, essentially, in the general linear group case, in which the standard module conjecture holds. This means that $\gamma_{\psi}^{-1}(\tau_i\nu^{s_i'}\times \tau_j\nu^{s_j'})$ and $\gamma_{\psi}^{-1}(\tau_j\nu^{s_j'}\times \widetilde{\tau_i}\nu^{-s_i'})$ have to be irreducible. From the Corollary \ref{generic_Langlands_quotient} applied to the third case of \eqref{rank_one_operators}, it follows that 
\[C_{\psi}^{\widetilde{Sp(2m,F)}}(\widetilde{P_i},s,\tau_i\nu^{s_i'}\otimes \sigma,w_0)\]
is holomorphic at $s=0$ (and we already know that it is non-zero). From the general linear case we conclude (in the language of our original data):
if $s_i>s_j,$ then $\delta_i\nu^{s_i}\times \delta_j\nu^{s_j}$ is irreducible and $\delta_i\nu^{s_i}\times \widetilde{\delta_j}\nu^{-s_j}$ is irreducible. If $s_i=s_j$  for some $i<j$ we have: from the irreducibility of the unitary induction in the $GL$--case, we know that $\delta_i\nu^{s_i}\times \delta_j\nu^{s_i}$ is irreducible. We just have to prove that $\delta_j\nu^{s_i}\times \widetilde{\delta_i}\nu^{-s_i}$ is irreducible if $i<j.$

Now we use \eqref{C_factor_gamma_factors} to study the holomorphicity of  $C_{\psi}^{\widetilde{Sp(2m,F)}}(\widetilde{P_i},s,\tau_i\nu^{s_i'}\otimes \sigma,w_0)$ at $s=0.$ Let $\tau_i=\delta_1\times \cdots\times \delta_r$ where $\delta_i$'s are square-integrable. We have 
\begin{align}
\label{C_tempered}
&C_{\psi}^{\widetilde{Sp(2m,F)}}(\widetilde{P_i},s,\tau_i\nu^{s_i'}\otimes \sigma,w_0)= \epsilon(s)\\ \notag
&\prod_{j=1}^r\left (\prod_{k=1}^{j-1}\gamma(\delta_k\nu^{s_i'}\times \widetilde{\delta_j}\nu^{-s_i'},s,\psi)\right )\\
&\frac{\gamma(\delta_j\nu^{s_i'},sym^2,2s,\psi)}{\gamma(\delta_j\nu^{s_i'},s+ \frac{1}{2},\psi)}\gamma(\sigma\times \delta_j\nu^{s_i'},s,\psi).\notag
\end{align}
Note that  the  expression $\frac{\gamma(\delta_j\nu^{s_i'},sym^2,2s,\psi)}{\gamma(\delta_j\nu^{s_i'},s+ \frac{1}{2},\psi)}\gamma(\sigma\times \delta_j\nu^{s_i'},s,\psi)$ is, up to an exponential factor,  the local coefficient attached to a representation $\gamma_{\psi}^{-1}\delta_j\nu^{s_i'}\rtimes \sigma$ (and the longest Weyl group element). Thus, we know that this local coefficient is not zero (as in  Corollary \ref{generic_Langlands_quotient}).
This means that, in order for $C_{\psi}^{\widetilde{Sp(2m,F)}}(\widetilde{P_i},s,\tau_i\nu^{s_i'}\otimes \sigma,w_0)$ to be holomorphic, the expression
\[\prod_{j=1}^r\left (\prod_{k=1}^{j-1}\gamma(\delta_k\nu^{s_i'}\times \widetilde{\delta_j}\nu^{-s_i'},s,\psi)\right )\]
has to be holomorphic. But this means that $\delta_k\nu^{s_i'}\times \widetilde{\delta_j}\nu^{-s_i'}$ has to be irreducible, since these Rankin-Selberg $\gamma$-factors are essentially the local factors (and are non-zero). This finishes the proof of \eqref{GL_conditions}.

The next step is to examine the consequences of the holomorphy at $s=0$ of
\[\frac{\gamma(\delta_j\nu^{s_i'},sym^2,2s,\psi)}{\gamma(\delta_j\nu^{s_i'},s+ \frac{1}{2},\psi)}\gamma(\sigma\times \delta_j\nu^{s_i'},s,\psi)\]
for each $i=1,2,\ldots,r.$
Note that $\gamma(\delta_j\nu^{s_i'},sym^2,2s,\psi)\gamma(\sigma\times \delta_j\nu^{s_i'},s,\psi)$ is, using Corollary \ref{transfer_gamma_factors}, up to an exponential factor, exactly the local coefficient attached to the  (standard) representation $\delta_j\nu^{s_i'}\rtimes \Theta_{\psi}(\sigma).$ Thus, it is non-zero. Note that, when we expand 
\[\frac{1}{\gamma(\delta_j\nu^{s_i'},s+ \frac{1}{2},\psi)}\]
 using L-functions, we get
 \[\frac{L(s+ \frac{1}{2},\delta_j\nu^{s_i'})}{L(\frac{1}{2}-s,\widetilde{\delta_j}\nu^{-s_i'})}\]
which is always holomorphic for $s=0$ by the well-known properties of Rankin-Selberg L-functions for   tempered representations (\cite{Sh1}).
It may have a zero if $L(\frac{1}{2}-s,\widetilde{\delta_j}\nu^{-s_i'})$ has a pole for $s=0.$ By expanding as in \cite{Shahidi_Twisted_endoscopy}, we have that $L(\frac{1}{2},\widetilde{\delta_j}\nu^{-s_i'})=L(\frac{1}{2}-s_i'+\frac{a-1}{2},\widetilde{\rho_j})$ if $\delta_j=\delta([\rho_j\nu^{-\frac{a-1}{2}},\rho_j\nu^{\frac{a-1}{2}}]).$ We get that a pole occurs only if $\rho_j=1_{GL_1}$ and 
$\delta_j\nu^{s_i'}=\delta([\nu^{\frac{1}{2}},\nu^{2s_i'-\frac{1}{2}}])=St_{2s_i'}\nu^{s_i'}.$ Thus, we have two cases.

\smallskip

\noindent Assume $\delta_j\nu^{s_i'}\neq \delta([\nu^{\frac{1}{2}},\nu^{2s_i'-\frac{1}{2}}]).$
Then, $\gamma(\delta_j\nu^{s_i'},sym^2,2s,\psi)\gamma(\Theta_{\psi}(\sigma)\times \delta_j\nu^{s_i'},\allowbreak s,\psi)$ is holomorphic, thus by Corollary \ref{generic_Langlands_quotient} (since the same argument as there holds in the case of odd orthogonal groups), the Langlands quotient $L(\delta_j\nu^{s_i'};\Theta_{\psi}(\sigma))$ is generic. But, since the standard module conjecture holds for odd orthogonal groups, this means that
\[
\label{eq:singlestar}\tag{*}
\delta_j\nu^{s_i'}\rtimes \Theta_{\psi}(\sigma) \text{ is irreducible}.
\]
However, $\gamma_{\psi}^{-1}\delta_j\nu^{s_i'}\rtimes \sigma$  is also irreducible. Indeed,  let $\Theta_{\psi}(\sigma)^{\epsilon}$ be the unique extension of $\Theta_{\psi}(\sigma)$ from $SO(2n_0+1,F)$ to 
$O(2n_0+1,F)$ which participates in $\psi$--theta correspondence with $\widetilde{Sp(2n_0,F)}.$ Then, 
$\Theta(\Theta_{\psi}(\sigma)^{\epsilon},2n_0)=\sigma.$ We now use Lemma \ref{cor5.3} with $l=0$: since $\delta_j\nu^{s_i'}\rtimes \Theta_{\psi}(\sigma)$ is irreducible, we have
\[\delta_j\nu^{-s_i'}\rtimes \Theta_{\psi}(\sigma)^{\epsilon}\twoheadrightarrow L(\delta_j\nu^{s_i'};\Theta_{\psi}(\sigma)^{\epsilon}).\]
Since $\delta_j\nu^{-s_i'}\neq St_{2s_i'}\nu^{-s_i'},$ Lemma \ref{cor5.3} gives
\[
\begin{split}
    \gamma_{\psi}^{-1}\delta_j\nu^{-s_i'}\rtimes \sigma \twoheadrightarrow \Theta(L(\delta_j\nu^{s_i'}; \Theta_{\psi}(\sigma)^{\epsilon}),2n_0+2n_j)\\
    \twoheadrightarrow \theta(L(\delta_j\nu^{s_i'}; \Theta_{\psi}(\sigma)^{\epsilon}),2n_0+2n_j).
    \end{split}
    \]
Then, if $\Theta(L(\delta_j\nu^{s_i'}; \Theta_{\psi}(\sigma)^{\epsilon}),\allowbreak 2n_0+2n_j)\neq 0,$ we have $\theta(L(\delta_j\nu^{s_i'}; \Theta_{\psi}(\sigma)^{\epsilon}),2n_0+2n_j)=\Theta_{\psi}(L(\delta_j\nu^{s_i'}; \Theta_{\psi}(\sigma)).$ This is, by Theorem 1.3.(iii) of \cite{Gan_Savin_Metaplectic_2012}, exactly $L(\gamma_{\psi}^{-1}\delta_j\nu^{s_i'}; \sigma).$
Thus, we have
\[\gamma_{\psi}^{-1}\delta_j\nu^{-s_i'}\rtimes \sigma \twoheadrightarrow L(\gamma_{\psi}^{-1}\delta_j\nu^{s_i'}; \sigma)\]
 and the properties of Langlands classification guarantee that $\gamma_{\psi}^{-1}\delta_j\nu^{-s_i'}\rtimes \sigma $ is irreducible.
 We now comment on non-vanishing of $\Theta(L(\delta_j\nu^{s_i'}; \Theta_{\psi}(\sigma)^{\epsilon}),2n_0+2n_j).$ We want to show that 
 $L(\delta_j\nu^{s_i'}; \Theta_{\psi}(\sigma)^{\epsilon})$ is the right extension of $L(\delta_j\nu^{s_i'};\allowbreak \Theta_{\psi}(\sigma))$ which participates in theta correspondence. But this easily follows from the multiplicativity of epsilon factors, which govern the non-vanishing of the lift (cf.~the seventh section of \cite{Gan_Savin_Metaplectic_2012}).

 \smallskip

 \noindent Now we assume that $\delta_j\nu^{s_i'}=\delta([\nu^{\frac{1}{2}},\nu^{2s_i'-\frac{1}{2}}])$ (so, necessarily $2s_i'\in \N$).  We saw that, in this case, 
 \begin{itemize}
 \item $\frac{1}{\gamma(\delta_j\nu^{s_i'},s+ \frac{1}{2},\psi)}$ has a simple zero at $s=0$
 \item we easily get  (e.g.~by using  Proposition 8.1 of \cite{Shahidi_Twisted_endoscopy}) that $\gamma(\delta_j\nu^{s_i'},\allowbreak sym^2,2s,\psi)$ has a pole of the first order. 
 \end{itemize}
From this, we conclude that
 \begin{itemize}
 \item $\gamma(\sigma\times \delta_j\nu^{s_i'},s,\psi)$ is holomorphic and non-zero
 \item
 \label{reducibilty_SO}$\gamma(\delta_j\nu^{s_i'},sym^2,2s,\psi)\gamma(\sigma\times \delta_j\nu^{s_i'},s,\psi)$ has a pole; using arguments as in the previous case, this means that the Langlands quotient $L(\delta_j\nu^{s_i'};\Theta_{\psi}(\sigma))$ is  not generic; thus
 \[
 \label{eq:doublestar}\tag{**}
 \delta_j\nu^{s_i'}\rtimes \Theta_{\psi}(\sigma) \text{ reduces}.
 \]
 \end{itemize}
 We now study the condition of $\gamma(\sigma\times \delta_j\nu^{s_i'},s,\psi)=\gamma(\Theta_{\psi}(\sigma)\times \delta_j\nu^{s_i'},s,\psi)$
  being holomorphic. Let $\delta_1',\ldots,\delta_l'$  and $\sigma'$ denote the tempered support of $\Theta_{\psi}(\sigma),$ i.e.
  \[\Theta_{\psi}(\sigma)\hookrightarrow \delta_1'\times \cdots\times \delta_l'\rtimes \sigma'.\]
  Then, by the multiplicativity of gamma factors (\cite{Shahidi_multiplicativity}),
  \[
  \begin{split}
      \gamma(\Theta_{\psi}(\sigma)\times \delta_j,s+s_i',\psi)&=\\ \gamma(\sigma'\times \delta_j,s+s_i',\psi)&\prod_{k=1}^{l'}\gamma(\delta_j\times \delta_k',s+s_i',\psi)\gamma(\delta_j\times \widetilde{\delta_k'},s+s_i',\psi).
      \end{split}
      \]
  By Proposition 7.2 of \cite{Sh2}, each of the factors is non-zero (for $s=0$; recall $s_i'>0$) so for product to be holomorphic, each factor has to be holomorphic. Thus, $\delta_j\nu^{s_i'}\times \delta_k'$ and  $\delta_j\nu^{s_i'}\times \widetilde{\delta_k'}$ has to be irreducible. In terms of L-parameter of $\Theta_{\psi}(\sigma)$ (cf. \cite{Arthur_endoscopic}) we can reformulate  holomorphy as the following condition: if $1\otimes S_a$ appears in the L-parameter of $\Theta_{\psi}(\sigma)$  (and comes from  $\delta_1',\ldots,\delta_l'$ ) for some  even $a,$ then $a\ge 4s_i'.$  

  \smallskip

  \noindent Now we analyze the holomorphicity of $\gamma(\sigma'\times \delta_j,s+s_i',\psi),$ where $\sigma'$ is a  generic discrete series representation.  Either by using \eqref{embedding_discrete} and Shahidi's results on multiplicativity, or, directly with the L-parameter of $\sigma'$ (\cite{Arthur_endoscopic}, cf.~Section A4. of \cite{Atobe_Gan}) we easily  get that the holomorphicity of $\gamma(\sigma'\times \delta_j,s+s_i',\psi)$ forces each $a,$ such that $1\otimes S_a$ belongs to the L-parameter of $\sigma',$ to be greater of equal to $4s_i'.$
\end{proof}

In Theorem 1.3 (v) of \cite{Gan_Savin_Metaplectic_2012},  the authors claim that if $\pi$ is an irreducible, $\psi$--generic and tempered representation of $\widetilde{Sp(2n)},$ then $\Theta_{\psi}(\pi)$ is generic tempered representation of $SO(2n+1).$ The proof of Theorem \ref{generic_reps_metaplectic} enables us to explicitly describe when the lift of an irreducible, $\psi$--generic representation $\pi$ of $\widetilde{Sp(2n)}$ is generic.

\begin{cor}
\label{cor_transfer_gen}
Let $\pi=L(\gamma_{\psi}^{-1}\delta_1\nu^{s_1},\gamma_{\psi}^{-1}\delta_2\nu^{s_2},\ldots,\gamma_{\psi}^{-1}\delta_k\nu^{s_k};\sigma)$ be a $\psi$-generic, irreducible representation of $\widetilde{Sp(2n)}.$ Then, $\Theta_{\psi}(\pi)$ is generic if and only if 
none of the representations $\delta_1\nu^{s_1},\ldots, \delta_k\nu^{s_k}$ is of the form $\delta([\nu^{1/2},\allowbreak\nu^{m+1/2}])$ for some $m\in \N_0.$
\end{cor}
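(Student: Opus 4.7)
The plan is to transfer the question to the orthogonal side, where the standard module conjecture is available, and then extract the answer from the rank-one analysis already carried out during the proof of Theorem \ref{generic_reps_metaplectic}. Since $\pi$ is $\psi$-generic, so is the underlying tempered datum $\sigma$, and by Lemma \ref{generic_transfer}(2) the representation $\Theta_\psi(\sigma)$ is tempered and generic. By part (iii) of Theorem 1.3 of \cite{Gan_Savin_Metaplectic_2012} (applied in the same way as at the end of the proof of Theorem \ref{generic_reps_metaplectic}), theta correspondence commutes with Langlands classification in the $\psi$-generic setting, giving the identification
\[
\Theta_\psi(\pi) \;=\; L(\delta_1\nu^{s_1},\ldots,\delta_k\nu^{s_k};\Theta_\psi(\sigma)),
\]
which places the question inside a standard Langlands setup for the quasi-split group $SO(2n+1,F)^+$.

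Next, I would apply the standard module conjecture for odd orthogonal groups (\cite{Muic_proof_Casselman_Shahidi}): the Langlands quotient of a standard module whose tempered datum is generic is itself generic if and only if that standard module is irreducible. Thus, the question reduces to when
\[
\delta_1\nu^{s_1}\times\cdots\times\delta_k\nu^{s_k}\rtimes\Theta_\psi(\sigma)
\]
is irreducible. Factoring the long intertwining operator through generalized rank-one operators, this is equivalent to the irreducibility of each of
\[
\delta_i\nu^{s_i}\times\delta_j\nu^{s_j},\qquad \delta_i\nu^{s_i}\times\widetilde{\delta_j}\nu^{-s_j},\qquad \delta_i\nu^{s_i}\rtimes\Theta_\psi(\sigma).
\]

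The first two families are precisely the GL-type conditions \eqref{GL_conditions} of Theorem \ref{generic_reps_metaplectic} and hold automatically by the hypothesis that $\pi$ is $\psi$-generic. The third family is exactly the dichotomy \eqref{eq:singlestar}/\eqref{eq:doublestar} established during the proof of Theorem \ref{generic_reps_metaplectic}: the induction $\delta_i\nu^{s_i}\rtimes\Theta_\psi(\sigma)$ is irreducible when $\delta_i\nu^{s_i}\neq\delta([\nu^{1/2},\nu^{2s_i-1/2}])$, and reducible otherwise (the reducibility being forced by the first-order pole of $\gamma(\delta_i\nu^{s_i},\mathrm{Sym}^2,2s,\psi)$ at $s=0$, which is the obstruction to $L(\delta_i\nu^{s_i};\Theta_\psi(\sigma))$ being generic). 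Combining these observations gives precisely the criterion claimed in the corollary.

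The main obstacle is justifying the identification $\Theta_\psi(\pi)=L(\delta_1\nu^{s_1},\ldots;\Theta_\psi(\sigma))$ rigorously, since it requires non-vanishing of the lift on the target tower and the correct matching of the extension from $SO(2n+1,F)$ to $O(2n+1,F)$. In our situation both of these are controlled: non-vanishing follows from the $\psi$-genericity of $\pi$ (which forces the lift to land on the split tower via Lemma \ref{generic_transfer}), and the matching of extensions via multiplicativity of $\varepsilon$-factors is exactly what is invoked in \cite{Gan_Savin_Metaplectic_2012} to prove Theorem 1.3(iii). Everything else is a direct transcription of the rank-one computations already carried out in the proof of Theorem \ref{generic_reps_metaplectic}.
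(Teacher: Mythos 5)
Your argument is correct and follows essentially the same route as the paper: transfer to $SO(2n+1,F)^+$ via Theorem 1.3(iii) of Gan–Savin to write $\Theta_\psi(\pi)=L(\delta_1\nu^{s_1},\ldots,\delta_k\nu^{s_k};\Theta_\psi(\sigma))$, invoke the standard module conjecture for odd orthogonal groups together with the factorization of the long intertwining operator, and then decide each rank-one reducibility $\delta_i\nu^{s_i}\rtimes\Theta_\psi(\sigma)$ using the claims \eqref{eq:singlestar} and \eqref{eq:doublestar} established in the proof of Theorem \ref{generic_reps_metaplectic}. Your additional remarks about non-vanishing and the matching of extensions merely make explicit what the paper leaves implicit in citing Gan–Savin.
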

\begin{proof} By Theorem 1.3.~(iii) of \cite{Gan_Savin_Metaplectic_2012}, $\Theta_{\psi}(\pi)=L(\delta_1\nu^{s_1},\delta_2\nu^{s_2},\ldots,\delta_k\nu^{s_k};\Theta_{\psi}(\sigma)).$ Now the result directly follows using claims \eqref{eq:singlestar} and \eqref{eq:doublestar} from the proof of Theorem \ref{generic_reps_metaplectic}, coupled with the factorization of the long intertwining operator and the standard module conjecture for the odd orthogonal groups.
\end{proof}

Theorem \ref{generic_reps_metaplectic} characterizes standard $\psi$-generic representation which possess a $\psi$-generic Langlands quotient. However, we have yet to characterize the reducibility of such a standard representation. We do this in the following theorem.

\begin{thm}
\label{standard_module_reducibility}
Let $\sigma$ be an irreducible tempered $\psi$-generic representation of $\widetilde{Sp(2n,F)}$ such that $l(\sigma) = 0$; let $\phi$ denote its $L$-parameter. We set $a_0= \min\{a: a \text{ is even and }S_a \hookrightarrow \phi\}$. Assume that $s_i \geqslant 1$ is a half-integer (i.e., $2s_i \in \mathbb{Z}$). Then the following holds:
\[
\gamma_\psi^{-1} St_{2s_i}\nu^{s_i} \rtimes \sigma \text{ is }
\begin{cases}
\text{reducible}, &\quad \text{if }4s_i \lneqq a_0\\
\text{irreducible}, &\quad \text{if }4s_i = a_0.
\end{cases}
\]
\end{thm}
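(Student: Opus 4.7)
The plan is to reduce the reducibility question for $I := \gamma_\psi^{-1}St_{2s_i}\nu^{s_i}\rtimes\sigma$ to a comparison with the orthogonal-side standard module $I_{\mathrm{SO}} := St_{2s_i}\nu^{s_i}\rtimes\Theta_\psi(\sigma)$, via the theta correspondence and the local coefficient computations already carried out in the proof of Theorem \ref{generic_reps_metaplectic}.

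First I would observe that $I_{\mathrm{SO}}$ is always reducible under the hypothesis $4s_i \leq a_0$. Indeed, the same analysis performed for the case $\delta_j\nu^{s_i'}=\delta([\nu^{1/2},\nu^{2s_i-1/2}])$ in the proof of Theorem \ref{generic_reps_metaplectic} shows that $\gamma(St_{2s_i},\mathrm{sym}^2,2s_i,\psi)$ contributes a simple pole, while $\gamma(\Theta_\psi(\sigma)\times St_{2s_i}\nu^{s_i},0,\psi)$ is holomorphic and non-zero. Hence the orthogonal-side local coefficient has a pole at $s=s_i$, so by the standard module conjecture and the generalized injectivity conjecture for $SO(2m+1)$, the Langlands quotient $L' := L(St_{2s_i}\nu^{s_i};\Theta_\psi(\sigma))$ is non-generic and the unique generic constituent $\pi'$ of $I_{\mathrm{SO}}$ is a (tempered) subrepresentation. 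I would then identify the Moeglin--Tadi\'c Langlands parameter $\phi_{\pi'}$, which is obtained from $\phi_\sigma$ by a specific modification of the Jordan block $S_{a_0}$ induced by the segment $[\nu^{1/2},\nu^{2s_i-1/2}]$.

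Next I would transfer this structure to the metaplectic side. By Theorem 1.3(iii) of \cite{Gan_Savin_Metaplectic_2012}, the Langlands quotient $L_\pi := L(\gamma_\psi^{-1}St_{2s_i}\nu^{s_i};\sigma)$ of $I$ satisfies $\theta_0(L_\pi) = L'$. Using Lemma \ref{cor5.3} (which applies since $St_{2s_i}\nu^{s_i}\neq St_{2s_i}\nu^{-s_i}$ as $s_i\geq 1>0$) together with a Kudla-filtration analysis of the Jacquet modules of the Weil representation, in the spirit of Lemma 3.3 of \cite{Hanzer_R_metaplectic}, the only candidate for an extra irreducible composition factor of $I$ is a representation whose $\theta_0$-lift equals $\pi'$. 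Reducibility of $I$ is therefore equivalent to whether such an extra factor actually appears in the composition series of $I$, which in turn reduces to a non-vanishing and composition-series-placement question for $\theta_0(\pi')$.

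The decisive step is the explicit non-vanishing analysis of $\theta_0(\pi')$ combined with the evaluation of the correction terms in the Kudla filtration. Using the Moeglin--Tadi\'c parameter of $\pi'$, the Gan--Savin non-vanishing criteria, and the Sun--Zhu conservation relation, I would show: when $4s_i < a_0$ the Jordan-block modification leaves $\pi'$'s first occurrence at the current tower level, so $\theta_0(\pi')$ is non-zero and appears in $I$ as a composition factor distinct from $L_\pi$, proving that $I$ is reducible; when $4s_i = a_0$ the Jordan block $S_{a_0}$ is absorbed by the segment so that $\pi'$'s first occurrence drops off the current level, the corresponding filtration-correction term vanishes, and $I = L_\pi$ is irreducible. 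The hard part will be this boundary analysis --- verifying that the cancellation in the Kudla filtration is governed exactly by the equality $4s_i = a_0$, which requires delicate tracking of Jordan-block multiplicities under segment insertions together with the application of the conservation relation.
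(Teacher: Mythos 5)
Your high-level observations about the orthogonal side (that $St_{2s_i}\nu^{s_i}\rtimes\Theta_\psi(\sigma)$ reduces with a tempered generic subrepresentation whenever $4s_i\le a_0$, and that the Langlands quotient of the metaplectic standard module transfers to the Langlands quotient of the orthogonal one via Theorem~1.3(iii) of \cite{Gan_Savin_Metaplectic_2012}) are correct, and the paper also establishes that all constituents of $\gamma_\psi^{-1}St_{2s_i}\nu^{s_i}\rtimes\sigma$ other than the Langlands quotient are tempered. From there, however, your proposal diverges from the paper and has a genuine gap.

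The core of your strategy is the assertion that reducibility of $I=\gamma_\psi^{-1}St_{2s_i}\nu^{s_i}\rtimes\sigma$ is equivalent to whether $\theta_0(\pi')$ (the lift of the tempered subrepresentation $\pi'$ of $I_{\mathrm{SO}}$) is non-zero and sits in the composition series of $I$. This equivalence is not established and, as stated, it is not clear how to establish it: theta correspondence is compatible with Langlands quotients but not with composition series in general. The tool you invoke, Lemma~\ref{cor5.3}, gives information only about quotients, and at level $l=0$ it hits precisely its exceptional case when you try to apply it to the surjection onto a putative tempered factor: one would use $\gamma_\psi^{-1}St_{2s_i}\nu^{-s_i}\rtimes\sigma\twoheadrightarrow T$, and $St_{2s_i}\nu^{-s_i}$ is exactly $St_{k}\nu^{\frac{l-k}{2}}$ with $l=0$, $k=2s_i$, so the lemma does not apply. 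You have in effect relocated the entire difficulty into the sentence ``the hard part will be this boundary analysis,'' without a route to carry it out.

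For comparison, the paper takes a different, more concrete path in both directions. For reducibility when $4s_i<a_0$, it does not use a theta-transfer of $\pi'$ at all; instead it imitates Proposition~\ref{prop_reducibility_1_2}, analyzing the long intertwining operator $A(w_0,s)$ and its factorization through $T_1,T_2,T_3$, and obtaining a contradiction with holomorphy at $s=0$ (an $R$-group type argument) if $I$ were irreducible. For irreducibility when $4s_i=a_0$, the paper first proves the temperedness claim, then uses the generalized injectivity result from \cite{Hanzer_injectivity} to write $\Theta_0(\sigma)\hookrightarrow\delta([\nu^{1/2},\nu^{2s_i-1/2}])\rtimes\sigma_1$, applies Kudla's filtration (Corollary~3.7 of \cite{bakic_generic}) plus an $\epsilon$-factor/first-occurrence argument to isolate the correct branch and show $\Theta_0(\sigma_1)=0$, and then uses the isomorphism $\gamma_\psi^{-1}\nu^{1/2}\rtimes\Theta_{-2}(\sigma_1)\cong\gamma_\psi^{-1}\nu^{-1/2}\rtimes\Theta_{-2}(\sigma_1)$ from Proposition~\ref{prop_reducibility_1_2} to derive an embedding of the hypothetical tempered factor that violates Casselman's criterion. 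Your proposal, as written, does not reach a complete argument in either direction.
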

\noindent Note that this dictates the reducibility of the standard representation considered in Theorem \ref{generic_reps_metaplectic}. We have omitted the case $s_i =\frac{1}{2}$, as it is treated in Proposition \ref{prop_reducibility_1_2}. We postpone the proof of Theorem \ref{standard_module_reducibility} to Appendix A.

\section{Non-vanishing of theta lifts}
\label{sec:non-vanishing}
We want to find the dimensions of the first occurrences of a $\psi$--generic irreducible representation of $\widetilde{Sp(2n,F)}$ in a pair of orthogonal towers with respect to  $\psi$--theta correspondence. After the determination of the levels of the first occurrence, we want to determine completely how these (small) theta lifts look like, not only for the first occurrence, but also further up in the quadratic towers. 
For simplicity of notation, we first treat the (more complicated) case of the lifts to the pair of orthogonal towers attached to the trivial quadratic character (i.e., in notation of \cite{Atobe_Gan}, $\chi_V$ is trivial).

In addition to the notation introduced at the end of Section \ref{subs:thetaintro}, we use the following to denote the first occurrence indices (cf. \cite[Section 2.10]{Atobe_Gan}).
For an irreducible representation $\pi$ of  $\widetilde{Sp(2n,F)}$ let $m^{down}(\pi)$ denote the dimension at which $\pi$ appears in the theta correspondence with the tower where it first appears (between the two towers in a pair). This is the extension of the definition of $m^{down}(\pi)$ when $\pi$ is tempered, given in \cite{Atobe_Gan}. Analogously we define $m^{up}(\pi).$ Let $l(\pi)=2n+1-m^{down}(\pi);$ again this is the extension of the definition of $l(\pi)$ when $\pi$ is tempered; note that for tempered $\psi$-generic $\pi,$ $l(\pi)\in\{0,2\}$ (cf.~Theorem 4.1.~of \cite{Atobe_Gan}). The conservation conjecture then guarantees that $\pi$ appears at the going up tower for $l=-l(\pi)-2.$
Then we have the following 
\begin{prop} 
\label{l(pi)}
Let $\pi=L(\gamma_{\psi}^{-1}\delta_1\nu^{s_1},\gamma_{\psi}^{-1}\delta_2\nu^{s_2},\ldots,\gamma_{\psi}^{-1}\delta_k\nu^{s_k};\sigma)$ be a $\psi$--generic representation of $\widetilde{Sp(2n,F)}.$ Then we have one of the following:
\begin{enumerate}
\item Assume that none of $\delta_1\nu^{s_1},\ldots,\delta_k\nu^{s_k}$ is of the form $\delta([\nu^{1/2},\nu^{2s_i-1/2}]).$ Then, $l(\pi)=l(\sigma).$
\item Assume that there exists (a unique!, cf.~Theorem  \ref{generic_reps_metaplectic}) $i\in \{1,2,\ldots,k\}$ such that $\delta_i\nu^{s_i}=\delta([\nu^{1/2},\nu^{2s_i-1/2}]).$ Then,
\begin{itemize}
\item If $l(\sigma)=2,$ then $s_i=\frac{1}{2}$ and $l(\pi)=l(\sigma)=2.$
\item If $l(\sigma)=0,$  then $l(\pi)=l(\sigma)=0$ unless $s_i=\frac{1}{2}.$
\end{itemize}
\end{enumerate}
\end{prop}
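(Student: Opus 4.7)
The strategy is a two-sided bound: show $l(\pi)\ge l(\sigma)$ by iterating Lemma \ref{cor5.3} on the Langlands data of $\pi$ to propagate $\Theta_{l(\sigma)}(\sigma)\neq 0$ into $\Theta_{l(\sigma)}(\pi)\neq 0$, and show $l(\pi)\le l(\sigma)$ by replaying the same iteration on the opposite tower (where $\sigma$ also appears non-trivially by conservation) and then invoking conservation for $\pi$. The preliminary observation is that the exceptional hypothesis $\delta_i\nu^{s_i}\cong St_{n_i}\nu^{(l-n_i)/2}$ of Lemma \ref{cor5.3}, given $s_i>0$ and the unitarity of $\delta_i$, forces $s_i=(l-n_i)/2$; this can only be met at $l=2$ with $n_i=1$ and $s_i=\tfrac{1}{2}$, i.e.\ exactly when $\delta_i\nu^{s_i}=\nu^{1/2}$. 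In case (1) this is forbidden by hypothesis; in case (2) it occurs if and only if $s_i=\tfrac{1}{2}$, and only at the single level $l=2$.

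\textbf{Case (1) and case (2) with $s_i\neq\tfrac{1}{2}$.} Here Lemma \ref{cor5.3} is fully applicable at both $l=l(\sigma)$ and $l=-l(\sigma)-2$. Iterating it on the standard surjection yields
\[\gamma_{\psi}^{-1}\delta_1\nu^{s_1}\times\cdots\times\gamma_{\psi}^{-1}\delta_k\nu^{s_k}\rtimes\Theta_{l(\sigma)}(\sigma)\twoheadrightarrow \Theta_{l(\sigma)}(\pi),\]
whose left-hand side is non-zero and whose image is identified, via Theorem 1.3 (iii) of \cite{Gan_Savin_Metaplectic_2012}, with the explicit Langlands quotient $L(\delta_1\nu^{s_1},\ldots,\delta_k\nu^{s_k};\Theta_{l(\sigma)}(\sigma))$. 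Non-vanishing of this quotient gives $l(\pi)\ge l(\sigma)$. For the reverse inequality we apply the same reasoning at $l=-l(\sigma)-2$ on the opposite tower: conservation applied to $\sigma$ ensures $\Theta_{-l(\sigma)-2}(\sigma)\neq 0$ there, Lemma \ref{cor5.3} applies throughout (negative exponents trivially avoid the exception), so $\Theta_{-l(\sigma)-2}(\pi)\neq 0$, and conservation applied to $\pi$ forces $l(\pi)\le l(\sigma)$.

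\textbf{Case (2) with $l(\sigma)=2$.} The equality $s_i=\tfrac{1}{2}$ is extracted from condition (ii) of Theorem \ref{generic_reps_metaplectic}: Proposition \ref{cuspidal_reducibility} together with the Atobe-Gan description identifies $l(\sigma)=2$ with $1\otimes S_2\hookrightarrow\phi_\sigma$, and applying (ii) to $a=2$ gives $4s_i\le 2$, hence $s_i=\tfrac{1}{2}$ (since $2s_i\in\N$). The bound $l(\pi)\le 2$ follows from the opposite-tower argument at $l=-4$, where Lemma \ref{cor5.3} still applies to every factor. The matching bound $l(\pi)\ge 2$ cannot be obtained by iterating Lemma \ref{cor5.3} at $l=2$, because the lemma fails precisely at the $\nu^{1/2}$ factor; the plan is to peel off the other $\delta_j\nu^{s_j}$ by Lemma \ref{cor5.3} (reducing the problem to the non-vanishing of $\Theta_2(\gamma_\psi^{-1}\nu^{1/2}\rtimes(\cdot))$ over a datum involving $\sigma$) and then handle the $\nu^{1/2}$ factor directly through Kudla's filtration, using the temperedness of $\Theta_2(\sigma)$ from Proposition \ref{irreducibility_temp_lift} together with the structural phenomenon isolated in Example \ref{primjer} to produce a non-zero piece of $\Theta_2(\pi)$.

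\textbf{Main obstacle.} The genuine difficulty is the lower bound in Case (2) with $l(\sigma)=2$: Lemma \ref{cor5.3} breaks down at precisely the critical level and factor, so non-vanishing of $\Theta_2(\pi)$ must be extracted by an explicit Kudla-filtration calculation isolating the $\nu^{1/2}$ factor, using the concrete structure of the lift on the orthogonal side rather than formal propagation through induction.
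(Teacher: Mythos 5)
There is a structural flaw in your proof that undermines both directions of the two-sided bound, as well as your entire Case~(2) analysis.

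Corollary~5.3 of Atobe--Gan (Lemma~\ref{cor5.3}) yields, under its hypotheses, a \emph{surjection}
$\delta\rtimes\Theta_l(\sigma)\twoheadrightarrow\Theta_l(\pi)$. A surjection onto the zero representation is still a surjection, so this statement carries information \emph{only} in the vanishing direction: if $\Theta_l(\sigma)=0$ then the source is zero, hence $\Theta_l(\pi)=0$. It does \emph{not} propagate non-vanishing: if $\Theta_l(\sigma)\neq 0$, the statement $\delta\rtimes\Theta_l(\sigma)\twoheadrightarrow\Theta_l(\pi)$ is vacuously true whenever $\Theta_l(\pi)=0$, and gives no control on the image. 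Both halves of your plan --- ``propagate $\Theta_{l(\sigma)}(\sigma)\neq 0$ into $\Theta_{l(\sigma)}(\pi)\neq 0$'' and the parallel claim at $l=-l(\sigma)-2$ --- rely precisely on this invalid non-vanishing propagation. The appeal to Theorem~1.3~(iii) of \cite{Gan_Savin_Metaplectic_2012} to ``identify the image'' does not repair this: that theorem describes the $l=0$ lift $\Theta_\psi$, not $\Theta_l$ for arbitrary $l$, and even there one must already know $\Theta_0(\pi)\neq 0$ before interpreting the image as the Langlands quotient.

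The paper's actual argument runs in the opposite direction. It takes $l=-l(\sigma)$ (one level \emph{past} the first occurrence on $\sigma$'s going-up tower), so that $\Theta_{-l(\sigma)}(\sigma)=0$; since $-l(\sigma)\le 0$, the exceptional form $St_t\nu^{(l-t)/2}$ has a non-positive exponent and never matches any $\delta_i\nu^{s_i}$ with $s_i>0$, so Lemma~\ref{cor5.3} applies without exception and yields $\Theta_{-l(\sigma)}(\pi)=0$. Conservation then gives $l(\pi)\ge l(\sigma)$ (and that $\pi$ shares towers with $\sigma$), with \emph{no} exceptional analysis needed. The upper bound is obtained at $l=l(\sigma)+2\in\{2,4\}$: again $\Theta_{l(\sigma)+2}(\sigma)=0$, and if one can iterate Lemma~\ref{cor5.3} there, $\Theta_{l(\sigma)+2}(\pi)=0$, hence $l(\pi)\le l(\sigma)$. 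It is \emph{only here} that the exceptional cases matter. At $l=2$ the only candidate is $\nu^{1/2}$; at $l=4$ the candidates are $\nu^{3/2}$, $St_2\nu^1$ and $St_3\nu^{1/2}$. The first and third of these are not of the critical form $\delta([\nu^{1/2},\nu^{2s_i-1/2}])$, so condition~(i) of Theorem~\ref{generic_reps_metaplectic} makes $\gamma_\psi^{-1}\delta_i\nu^{s_i}\rtimes\sigma$ irreducible and one may replace $\delta_i\nu^{s_i}$ by $\widetilde{\delta_i}\nu^{-s_i}$ in the standard data, dodging the exception. The second, $St_2\nu^1$, is of the critical form with $s_i=1$; but $l(\sigma)=2$ means $S_2\hookrightarrow\phi_\sigma$, and condition~(ii) of Theorem~\ref{generic_reps_metaplectic} requires $2\ge 4s_i$, a contradiction, so this factor cannot occur. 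The remaining exception $\nu^{1/2}$ (which arises when $l(\sigma)=0$, $l=2$) is exactly the case the proposition excludes.

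In particular, your ``main obstacle'' --- establishing $l(\pi)\ge 2$ directly at $l=2$ by Kudla's filtration when $l(\sigma)=2$ --- is an artifact of the incorrect framework. In the vanishing-propagation argument the lower bound $l(\pi)\ge l(\sigma)$ comes for free (no exceptions at $l\le 0$); the genuine subtlety is entirely in the \emph{upper} bound at $l=l(\sigma)+2=4$, and it is resolved by the irreducibility/genericity criteria above rather than by any Weil-representation computation.
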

\begin{proof}
We look at the lifts of $\pi$ on the going-up tower for $\sigma.$ Let $l=-l(\sigma),$ so that $\Theta_l(\sigma)=0.$
Then, by the the consecutive use of Lemma \ref{cor5.3}, we get that
\[\delta_1\nu^{s_1}\times \delta_2\nu^{s_2}\times \cdots\times\delta_k\nu^{s_k}\rtimes\Theta_l(\sigma)\twoheadrightarrow \Theta_l(\pi).\]
Note that we are not in the exceptional case of that Lemma: indeed, since $l\le 0,$ so that $\frac{l-t}{2}<0$ for each positive $t,$ then $\delta_i\nu^{s_i}\ncong St_t^{\frac{l-t}{2}}.$ This means that $\Theta_l(\pi)=0,$ so that, in that tower, $\pi$ appears later (or for the same $l$) than $\sigma.$ But it means that on the other tower it appears earlier than $\sigma,$ i.e., $l(\pi)\ge l(\sigma).$ Note that this holds for every  $\psi$--generic $\pi;$ also, this means that the going-up tower for $\pi$  is the same as for $\sigma.$
Now we argue similarly for the going-down tower; we look at the level $l=l(\sigma) +2.$ We  again get that $\Theta_l(\pi)=0,$ provided we are not in the exceptional case of Lemma \ref{cor5.3}. If we are not in the exceptional case, this gives $l(\pi)\le l(\sigma)$  and $l(\pi)=l(\sigma).$ Now we just want to see what are the exceptional cases. 
If $l(\sigma)=2$, these are: $\nu^{3/2},\;\nu^1St_2\;\nu^{1/2}St_3.$ Note that the first  and the third representation are not of the form $St_{2s_i}\nu^{s_i},$ thus by Theorem \ref{generic_reps_metaplectic},  in the standard representation corresponding to $\pi$ we can interchange $\nu^{3/2}$ by $\nu^{-3/2}$  and $\nu^{1/2}St_{3}$ by $\nu^{-1/2}St_{3}$ so that we avoid exceptional case of Lemma \ref{cor5.3}. On the other hand, in the second case we are in the situation where $s_i=1,$ and $\nu^1St_{2}$ is indeed an exceptional case of Theorem \ref{generic_reps_metaplectic}. But since $l(\sigma)=2,$ an irreducible summand $S_2$ appears in the L-parameter of of $\sigma$ (Theorem 4.1 of \cite{Atobe_Gan}), this contradicts genericity requirement $4s_i\le 2$ appearing in Theorem \ref{generic_reps_metaplectic}, thus this case cannot happen. We conclude that if $l(\sigma)=2,$ then $l(\pi)=2.$ Using Theorem \ref{generic_reps_metaplectic}, we see that the only representation of the form $\delta_i\nu^{s_i}=\delta([\nu^{1/2},\nu^{2s_i-1/2}])$ which can appear among
$\delta_1\nu^{s_1},\ldots,\delta_k\nu^{s_k}$ if $l(\sigma)=2$ (i.e., if $S_2$ appears in the L-parameter of $\sigma$) is $\nu^{1/2}.$

Similarly, the exceptional case if $l(\sigma)=0$ is only  $\delta_i\nu^{s_i}=\nu^{1/2}.$ 
\end{proof}
From Proposition \ref{l(pi)} it obvious that the reducibility of the representation $\gamma_{\psi}^{-1}\nu^{1/2}\rtimes \sigma$ plays an important role in the calculation of the theta lifts of $\pi,$ if $\nu^{1/2}$ appears among  $\delta_1\nu^{s_1},\ldots,\delta_k\nu^{s_k}.$

\begin{prop} 
\label{prop_reducibility_1_2}
Assume that $\sigma$ is a $\psi$--generic, irreducible, tempered representation of $\widetilde{Sp(2n,F)}.$ Then $\gamma_{\psi}^{-1}\nu^{1/2}\rtimes \sigma$ is reducible if and only if $l(\sigma)=0.$
\end{prop}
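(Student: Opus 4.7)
The plan is to analyze the Plancherel measure $\mu(s, 1_{GL(1,F)}\otimes \sigma, \psi)$ attached to the standard parabolic with Levi $\widetilde{GL(1,F)}\times \widetilde{Sp(2n,F)}$, and apply the standard criterion (exactly as used in the proof of Proposition \ref{cuspidal_reducibility}) that $\gamma_\psi^{-1}\nu^s \rtimes \sigma$ reduces at $s = 1/2$ if and only if $\mu$ has a pole there. Writing $\mu(s) = C(s)C(-s)$ with
\[
C(s) := C_\psi^{\widetilde{Sp(2n+2,F)}}(\widetilde{P}, s, 1_{GL(1,F)}\otimes \sigma, w_0),
\]
the relation \eqref{C_factor_gamma_factors} together with Corollary \ref{transfer_gamma_factors} yields
\[
C(s) = c_F(s)\cdot \frac{\gamma(2s, 1_{GL(1,F)}, \psi)}{\gamma(s+1/2, 1_{GL(1,F)}, \psi)}\cdot \gamma(s, \Theta_\psi(\sigma), \psi),
\]
the symmetric-square $\gamma$-factor collapsing to a gamma factor of the trivial character since $1_{GL(1,F)}$ is one-dimensional. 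Recall that for tempered $\psi$-generic $\sigma$ one has $l(\sigma)\in\{0,2\}$, with $l(\sigma)=2$ equivalent to $1\otimes S_2 \hookrightarrow \phi$ by Theorem 4.1 of \cite{Atobe_Gan}.

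The core step is a pole-order count at $s=\pm 1/2$. Using $\gamma(s, 1_{GL(1,F)}, \psi) = \epsilon(s, 1_{GL(1,F)}, \psi)\, L(1-s, 1_{GL(1,F)})/L(s, 1_{GL(1,F)})$ with $L(s, 1_{GL(1,F)}) = (1 - q^{-s})^{-1}$: at $s=1/2$ both $\gamma(2s,1_{GL(1,F)},\psi)$ and $\gamma(s+1/2,1_{GL(1,F)},\psi)$ specialize to $\gamma(1, 1_{GL(1,F)}, \psi)$, which has a simple pole (from $L(0,1_{GL(1,F)})=\infty$ in the denominator), so their ratio is finite and non-zero; since $\Theta_\psi(\sigma)$ is tempered and self-dual, $\gamma(1/2, \Theta_\psi(\sigma), \psi)$ is finite non-zero as well, hence $C(1/2)$ is finite non-zero. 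At $s=-1/2$ the ratio of Tate factors acquires a simple pole (since $\gamma(0, 1_{GL(1,F)}, \psi)$ has a simple zero), so the decisive quantity is $\gamma(-1/2, \Theta_\psi(\sigma), \psi) = \epsilon\cdot L(3/2, \Theta_\psi(\sigma))/L(-1/2, \Theta_\psi(\sigma))$. Using $L(s, 1 \otimes S_a) = L(s + (a-1)/2, 1_{GL(1,F)})$ and the fact that non-trivial tempered cuspidal summands in $\phi$ contribute no poles on the real axis, the L-function $L(-1/2, \Theta_\psi(\sigma))$ has a simple pole if and only if $1 \otimes S_2 \hookrightarrow \phi$, i.e.\ iff $l(\sigma)=2$.

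Combining the two cases: if $l(\sigma) = 2$ the simple zero of $\gamma(-1/2, \Theta_\psi(\sigma), \psi)$ cancels the simple pole of the Tate ratio, so $C(-1/2)$ and hence $\mu(1/2)$ are finite and non-zero, forcing $\gamma_\psi^{-1}\nu^{1/2}\rtimes \sigma$ to be irreducible; if $l(\sigma) = 0$ no cancellation occurs, so $C(-1/2)$ and $\mu(1/2)$ carry a simple pole, and the representation is reducible. This yields the claimed equivalence.

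The main delicate point is the careful bookkeeping of orders of poles and zeros of the several $\gamma$-factors which are simultaneously critical at $s = \pm 1/2$; one must verify that all the singularities are simple so that the cancellation argument produces a finite non-zero value rather than an indeterminate form. A secondary technical issue is the justification of the Plancherel-measure reducibility criterion for tempered (not only cuspidal) $\sigma$; this is standard via Szpruch's metaplectic adaptation of the Silberger/Harish-Chandra theory, as invoked in Sections 4 and 10 of \cite{Szpruch_PhD} and used already in the proof of Proposition \ref{cuspidal_reducibility}.
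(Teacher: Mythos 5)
Your pole--zero bookkeeping for the local coefficient $C(s)$ is essentially correct when $1\otimes S_2$ appears in $\phi_\sigma$ with multiplicity at most one, and it does capture why the metaplectic Tate factor $\gamma(s+\tfrac{1}{2},1,\psi)^{-1}$ creates a pole at $s=-\tfrac{1}{2}$ that is only cancelled when $l(\sigma)=2$. However, the proof has a genuine gap that is precisely the obstacle the paper's proof is designed to circumvent: you invoke the criterion ``$\gamma_\psi^{-1}\nu^s\rtimes\sigma$ reduces at $s=1/2>0$ iff $\mu(s,1_{GL(1)}\otimes\sigma,\psi)$ has a pole there'' for \emph{tempered} $\sigma$, but this criterion is established (via the Harish-Chandra/Silberger theory and its metaplectic adaptation by Szpruch) only for \emph{discrete-series}, in particular cuspidal, inducing data, which is exactly how it is used in the proof of Proposition~\ref{cuspidal_reducibility}. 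For $\sigma$ tempered but not square-integrable the Plancherel measure $\mu(s)=C(s)C(-s)$ can factor through the rank-one factors coming from the discrete-series support of $\sigma$, and a pole coming from the square-integrable core can be cancelled by a zero from a GL factor or vice versa; conversely, finiteness of $\mu(1/2)$ does not preclude the intertwining operator $A(w_0,-s)$ from having a pole at $s=1/2$, in which case the composition $A(w_0,-s)A(w_0,s)=\mu(s)^{-1}\cdot\mathrm{id}$ no longer yields a holomorphic left inverse of $A(w_0,1/2)$ and the irreducibility argument breaks down. Calling this a ``secondary technical issue'' is not accurate: it is the central difficulty.

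This is not merely a matter of citing a cleaner reference. Notice that if $1\otimes S_2$ appears with multiplicity $m\geqslant 2$ in $\phi_\sigma$ (possible only when $\sigma$ is tempered non-discrete), then $L(-\tfrac{1}{2},\Theta_\psi(\sigma))$ has a pole of order $m$, so $\gamma(-\tfrac{1}{2},\Theta_\psi(\sigma),\psi)$ has a zero of order $m$, and your claim that ``$C(-1/2)$ and hence $\mu(1/2)$ are finite and non-zero'' is false: one gets $\mu(1/2)=0$, and the naive criterion you are using says nothing about irreducibility at a zero of $\mu$. The paper handles the two implications differently: for the ``$l(\sigma)=0\Rightarrow$ reducible'' direction it does use the pole of $\gamma(s,\psi)$ but via a careful factorization of the long intertwining operator on $\gamma_\psi^{-1}\delta([\nu^{-1/2},\nu^{1/2}])\nu^s\rtimes\sigma$ (together with the independently established holomorphy of that operator for tempered $\sigma$), which supplies the rigor your Plancherel-measure argument lacks; for the ``$l(\sigma)=2\Rightarrow$ irreducible'' direction it abandons local coefficients entirely and uses theta correspondence and Lemma~\ref{length_two} to show that no tempered subquotient can exist. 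To make your approach rigorous you would need to prove the Plancherel-measure reducibility criterion for tempered generic $\sigma$ in the metaplectic setting, which is in effect a statement about the structure of the relevant intertwining operators and is of comparable difficulty to the proposition itself.
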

\begin{proof} We first assume that $l(\sigma)=0.$ Then, the representation $\delta([\nu^{-1/2},\nu^{1/2}])\allowbreak \rtimes \allowbreak \Theta_{0}(\sigma)$ reduces ($\sigma$ and $\Theta_{0}(\sigma)|_{SO(2n+1,F)}$ have the same $L$--parameter). Furthermore, we know that $\gamma_{\psi}^{-1}\delta([\nu^{-1/2},\nu^{1/2}])\rtimes \sigma$ is reducible by Proposition 3.4.~of \cite{Hanzer_R_metaplectic}. This means that the long intertwining operator 
\[A(w_0,s):\gamma_{\psi}^{-1}\delta([\nu^{-1/2},\nu^{1/2}])\nu^s\rtimes \sigma\to \gamma_{\psi}^{-1}\delta([\nu^{-1/2},\nu^{1/2}])\nu^{-s}\rtimes \sigma\]
is holomorphic at $s=0.$ Indeed, if $\sigma$ is square-integrable, this is an easy extension of Harish-Chandra R-groups theory to the metaplectic case. We comment below the case when $\sigma$ is  not square-integrable. Here $w_0$ is the longest Weyl group element of $\widetilde{Sp(2n+4,F)}$ modulo the longest one in the corresponding Levi subgroup. But, $A(w_0,s)$ is the restriction of the intertwining operator $B(w_0,s)$ acting on $\gamma_{\psi}^{-1}(\nu^{1/2}\times\nu^{-1/2})\rtimes \sigma,$ which  is composed of the following intertwining operators
\begin{gather*}
\gamma_{\psi}^{-1}\nu^{1/2+s}\times\gamma_{\psi}^{-1}\nu^{-1/2+s}\rtimes \sigma\xrightarrow{T_1(s)}\\
\gamma_{\psi}^{-1}\nu^{1/2+s}\times\gamma_{\psi}^{-1}\nu^{1/2-s}\rtimes \sigma\xrightarrow{T_2(s)}\\
\gamma_{\psi}^{-1}\nu^{1/2-s}\times\gamma_{\psi}^{-1}\nu^{1/2+s}\rtimes \sigma\xrightarrow{T_3(s)}\\
\gamma_{\psi}^{-1}\nu^{1/2-s}\times\gamma_{\psi}^{-1}\nu^{-1/2-s}\rtimes \sigma.
\end{gather*}
Now we assume that $\gamma_{\psi}^{-1}\nu^{1/2}\rtimes \sigma$ is irreducible. Then, maybe after multiplying with a certain non-negative power of $s,$ $T_1(s),T_2(s),T_3(s)$ are all holomorphic isomorphisms. Thus, there exists $k\ge 0$ such that $\lim_{s\to 0}s^kB(w_0,s)$ is holomorphic isomorphism. Note that $k$ is actually strictly greater than $0$ since $T_2(s)$ certainly has a pole for $s=0.$ This means that $A(w_0,s)$ has a pole for $s=0,$ a contradiction. Thus, $\gamma_{\psi}^{-1}\nu^{1/2}\rtimes \sigma$ is reducible.\\
We now comment on holomorphy of $A(w_0,s)$ when $\sigma$ is non-square integrable. Let
\begin{equation}
\label{embedding_sigma}
\sigma\hookrightarrow \gamma_{\psi}^{-1}\delta_1\times \cdots \times \gamma_{\psi}^{-1}\delta_r\rtimes \sigma_0,
\end{equation}
where $\delta_i,\;i=1,2,\ldots,r$ and $\sigma_0$ are square-integrable. Then, $A(w_0,s)$ is a restriction of an operator which is composed of $GL$--induced operators
\[\gamma_{\psi}^{-1}\delta([\nu^{-1/2},\nu^{1/2}])\nu^s\times \gamma_{\psi}^{-1}\delta_i\to \gamma_{\psi}^{-1}\delta_i\times \gamma_{\psi}^{-1}\delta([\nu^{-1/2},\nu^{1/2}])\nu^s\]
and
\[\gamma_{\psi}^{-1}\delta_i\times \gamma_{\psi}^{-1}\delta([\nu^{-1/2},\nu^{1/2}])\nu^{-s}\to  \gamma_{\psi}^{-1}\delta([\nu^{-1/2},\nu^{1/2}])\nu^{-s}\times \gamma_{\psi}^{-1}\delta_i ,\]
and the operator
\[\gamma_{\psi}^{-1}\delta([\nu^{-1/2},\nu^{1/2}])\nu^{s}\rtimes \sigma_0\to\gamma_{\psi}^{-1}\delta([\nu^{-1/2},\nu^{1/2}])\nu^{-s}\rtimes \sigma_0.\]
The last operator is holomorphic, by the discussion above. Since for all $i=1,2,\ldots,r$ we have $\delta_i\ncong \delta([\nu^{-1/2},\nu^{1/2}]),$ by the Harish-Chandra commuting algebra theorem, all the $GL$--induced operators are holomorphic.

\noindent Now we assume $l(\sigma)=2.$  We need the following Lemma.
\begin{lem}
\label{length_two}
Let  $\sigma$ be a tempered $\psi$-generic representation of $\widetilde{Sp(2n,F)}$ whose L-parameter contains $S_2$ 
with  positive multiplicity. Then
\begin{enumerate}
\item Each (possible) irreducible subquotient of $\gamma_{\psi}^{-1}\nu^{1/2}\rtimes \sigma$ and of  $\nu^{1/2}\rtimes \Theta_{0}(\sigma)$ different from their Langlands quotient is tempered.
\item The representation $\nu^{1/2}\rtimes \Theta_{0}(\sigma)$  is of length two.
\end{enumerate}
\end{lem}
We postpone the proof of this Lemma after the proof of this Proposition.
Now we can finish the proof of Proposition. We recall that we now assume that the multiplicity in which $S_2$ appears in the L-parameter of $\sigma$ is positive (i.e.~$l(\sigma)=2$). Assume that $\gamma_{\psi}^{-1}\nu^{1/2}\rtimes \sigma$ is reducible. Thus, 
there is an irreducible, and according to Lemma  \ref{length_two}, tempered representation $T$ such that $\gamma_{\psi}^{-1}\nu^{-1/2}\rtimes \sigma\twoheadrightarrow T.$ We  prove that  $T$ cannot have a non-zero lift on the split orthogonal tower for $l=0.$ Thus, the lift on the non-split tower for $l=0$ should be non-zero, but, in this case, we also get a contradiction. So, assume firstly that $\Theta_{0}(T)$ is non-zero on the split tower. Note that necessarily $\Theta_{0}(T)$ is irreducible and tempered (\cite{Gan_Savin_Metaplectic_2012}, Theorem 8.1). Then, we either have
\begin{equation}
\label{epi}
\nu^{-1/2}\rtimes \Theta_{0}(\sigma)\twoheadrightarrow \Theta_{0}(T)
\end{equation}
or
\begin{equation}
\label{epi1}
\theta_{-2}(\sigma)=\Theta_{0}(T).
\end{equation}
The second possibility cannot occur, since by Theorem 4.3. of \cite{Atobe_Gan}, $\theta_{-2}(\sigma)$ is not tempered. Assume that the first possibility occurs. Then, by Lemma \ref{length_two}, $\Theta_{0}(T)$ is the generic tempered subquotient of $\nu^{-1/2}\rtimes \Theta_{0}(\sigma).$ But then $\Theta_{0}(\Theta_{0}(T))=T$ is $\psi$--generic (Theorem 9.1.~of \cite{Gan_Savin_Metaplectic_2012}), which is not true, since in $\gamma_{\psi}^{-1}\nu^{1/2}\rtimes \sigma$  its Langlands quotient is $\psi$--generic by Theorem \ref{generic_reps_metaplectic}. So, we conclude that $\Theta_{0}(T)$ has to be non-zero on the non-split tower. Now we again have \eqref{epi} or \eqref{epi1}, but on the non-split tower. But then neither \eqref{epi} nor \eqref{epi1} can hold, since both $\Theta_{0}(\sigma)$ and $\Theta_{-2}(\sigma)$ are zero on the non-split tower---$\sigma$ lifts to the non-split tower first on the level $l=-4$ as a consequence of the conservation relation. 
We conclude that $\gamma_{\psi}^{-1}\nu^{1/2}\rtimes \sigma$ is irreducible and we have finished the proof of this Proposition.
\end{proof}

\begin{proof} (of Lemma \ref{length_two}). For a representation $\tau$ of the metaplectic or odd-orthogonal group (full or special) we denote by $\mu^*(\tau)$ the semisimplification of the sum of Jacquet modules of $\tau$ over all maximal standard parabolic subgroups. There is a formula by Tadić (\cite{Tad_struc}) for classical groups, checked for metaplectic groups in \cite{Hanzer_Muic_metaplectic_Jacquet}, for calculation of $\mu^*$ for parabolically induced representations (e.g., Proposition 4.5 of \cite{Hanzer_Muic_metaplectic_Jacquet}).
Now, let $\pi\in \{\sigma,\Theta_{0}(\sigma)\},$ as we are going to address simultaneously the metaplectic and the orthogonal case. Assume that there exists a non-tempered irreducible subquotient $\pi_1$ of $\nu^{1/2}\rtimes \pi$ (if we are in the metaplectic case, we just add $\gamma_{\psi}^{-1}$ before a $GL$-representation; the same remark applies  for the expansion of $\mu^*(\nu^{1/2}\rtimes \pi)$ below). 
Then, directly form the Langlands classification, we get that there exist $l_1,l_2\in \R$ such that $l_1+l_2+1\in \N$ and $l_1-l_2<0,$ an irreducible cuspidal representation $\rho$ and  an irreducible representation $\pi_2$ of a smaller metaplectic or odd orthogonal group such that \[\pi_1\hookrightarrow \delta([\widetilde{\rho}\nu^{-l_2},\widetilde{\rho}\nu^{l_1}])\rtimes \pi_2.\]
Then 
\[\delta([\widetilde{\rho}\nu^{-l_2},\widetilde{\rho}\nu^{l_1}])\otimes \pi_2\le\mu^*(\pi_1)\le \mu^*(\nu^{1/2}\rtimes \pi).\]
Using the above mentioned formula for $ \mu^*(\nu^{1/2}\rtimes \pi),$ we get that
\[\delta([\widetilde{\rho}\nu^{-l_2},\widetilde{\rho}\nu^{l_1}])\otimes \pi_2\le \sum_{\delta'\otimes \sigma_1}(\nu^{-1/2}\times \delta'\otimes \sigma_1+\delta'\otimes \nu^{1/2}\rtimes \sigma_1 +\nu^{1/2}\times \delta'\otimes \sigma_1),\]
where $\mu^*(\pi)=\sum \delta'\otimes \sigma_1.$ Now we discuss all three summands above.
\begin{itemize}
\item $\delta([\widetilde{\rho}\nu^{-l_2},\widetilde{\rho}\nu^{l_1}])\le \nu^{-1/2}\times \delta',\;\pi_2=\sigma_1.$\\
Looking at the cuspidal support,  which has to be multiplicity free, and the fact that $\delta'$ has to be non-degenerate, we get that $\rho=1_{GL_1},$ and  $\delta'=\delta([\nu^{-l_2},\nu^{-3/2}])\times \delta([\nu^{1/2},\nu^{l_1}]).$ If the first factor is non-trivial, this contradicts the temperedness of $\pi.$  But then $-l_2\ge -\frac{1}{2}.$ Also, if the second factor exists, $l_1\ge \frac{1}{2}.$ This would contradict the requirement $l_1-l_2<0,$ thus, the second factor also does not exist (i.e., $\delta'=1$). Thus, $\delta([\widetilde{\rho}\nu^{-l_2},\widetilde{\rho}\nu^{l_1}])=\nu^{-\frac{1}{2}}$ and $\sigma_1=\pi.$ This gives that $\pi_1$ is actually the Langlands quotient of $\nu^{1/2}\rtimes \pi.$
\item $\delta([\widetilde{\rho}\nu^{-l_2},\widetilde{\rho}\nu^{l_1}])=\delta',\;\pi_2\le \nu^{1/2}\rtimes \sigma_1.$ This cannot happen, since this would contradict the temperedness of $\pi.$
\item $\delta([\widetilde{\rho}\nu^{-l_2},\widetilde{\rho}\nu^{l_1}])\le \nu^{1/2}\times \delta',\;\pi_2=\sigma_1.$ We have $\delta'=\delta([\nu^{-l_2},\nu^{-1/2}])\times \delta([\nu^{3/2},\nu^{l_1}]).$ The existence of the first factor would violate the temperedness of $\pi,$ so $-l_2\ge 1/2.$ The existence of the second factor would mean that $l_1\ge 3/2$ so that $-l_2+l_1\ge 2.$ Thus, the second factor does not exist, $\delta'=1,\;\sigma_1=\pi$ and $\delta([\widetilde{\rho}\nu^{-l_2},\widetilde{\rho}\nu^{l_1}])=\nu^{1/2},$ a contradiction.
\end{itemize}
We have proved the first part of this Lemma. As for the second part, we  prove a slightly more general

\noindent {\textbf{Claim}} Let $\pi_{00}$ be an irreducible, generic and tempered representation of $SO(2n+1,F)$ (or $O(2n+1,F)$). Then, the representation  $ \nu^{1/2}\rtimes \pi_{00}$ is of length two.\\
We now prove the {\textbf{Claim}}. We know that $\nu^{1/2}\rtimes \pi_{00}$ is reducible; namely the irreducible generic subqotient, say $T_1,$ is a tempered subrepresentation; cf.~Propositions 4.5 and 4.8 of \cite{Hanzer_injectivity}. 
We first prove the {\textbf{Claim}} in the situation in which $\pi_{00}$ is square-integrable. Assume firstly that $S_2$  does not appear in the L-parameter of $\pi_{00}.$ Then, we know by \cite{Hanzer_injectivity} that $\nu^{1/2}\rtimes \pi_{00}$ has a square-integrable irreducible generic subrepresentation. By the first part of the proof, all other irreducible subquotients of $\nu^{1/2}\rtimes \pi_{00},$ other than the Langlands quotient, are tempered. More precisely, they are all square-integrable, since they share the common cuspidal support. So, the  maximal proper subspace of $\nu^{1/2}\rtimes \pi_{00}$  is a tempered representation of finite length, with each subquotient square-integrable. Because of the projectivity of discrete series in the category of finite length tempered representations, paired with the fact that  the multiplicity of $\nu^{1/2}\otimes \pi_{00}$  in the appropriate Jacquet module of 
$\nu^{1/2}\rtimes \pi_{00}$ equals one (one can readily confirm this using Tadić's formula, since $S_2$ does not appear in the L-parameter of $\pi_{00}$), we get the claim.  If, on the other hand, $S_2$ appears in the L-parameter of $\pi_{00},$ then there exists an irreducible generic and square-integrable representation $\xi$ such that $\pi_{00}\hookrightarrow\nu^{1/2}\rtimes \xi.$ Let $T$ be any irreducible tempered subquotient of $\nu^{1/2}\rtimes \pi_{00}.$ Then $T\le \nu^{1/2}\times \nu^{-1/2}\rtimes \xi.$ So we either have $T\le \zeta(-1/2,1/2)\rtimes \xi,$ or $T\le \delta([\nu^{-1/2},\nu^{1/2}])\rtimes \xi.$ Both of these induced representations are actually unitarizable, so $T$ is their subrepresentation. Then the first possibility cannot occur because of Casselman temperedness  criterion. So, each tempered subquotient $T$ is embedded in $\delta([\nu^{-1/2},\nu^{1/2}])\rtimes \xi.$ But when we calculate the multiplicity of $\delta([\nu^{-1/2},\nu^{1/2}])\otimes \xi$ in the appropriate Jacquet module of  $\nu^{1/2}\rtimes \pi_{00},$ we see that it is equal to one; thus only one such $T$ can occur. 
	
\noindent Now we turn to the case in which $\pi_{00}$ is tempered (and generic). Let 
\[\pi_{00}\hookrightarrow \pi_1\times \pi_2\times \cdots \times \pi_k\rtimes\xi_0\]
be the embbeding of $\pi_{00}$ in its tempered support. Then, we also have
\[\nu^{1/2}\rtimes \pi_{00}\hookrightarrow \Pi:=\pi_1\times \pi_2\times \cdots \times \pi_k\times \nu^{1/2}\rtimes\xi_0.\]
The representation $\nu^{1/2}\rtimes\xi_0,$ by the previous cases, has the unique irreducible tempered (or square-integrable) generic subquotient (which is a subrepresentation), say $\xi_{00}.$ Similarly as above, we conclude than any tempered subquotient $T$ of $\nu^{1/2}\rtimes \pi_{00}$ is embedded as $T\hookrightarrow \Pi_1=:\pi_1\times \pi_2\times \cdots \times \pi_k  \rtimes \xi_{00}.$  Note that any tempered subquotient of $\Pi$ is actually a subrepresentation of $\Pi_1,$ which has the multiplicity one property.  This means that the maximal proper subrepresentation of $\nu^{1/2}\rtimes \pi_{00}$ (which is generated by all the irreducible tempered subquotients) is actually a subrepresentation of $\Pi_1,$ so it is semi-simple, thus the socle of $\nu^{1/2}\rtimes \pi_{00}.$ If, additionally, $S_2$ does not appear in the L-parameter of $\pi_{00},$ we get that the multiplicity of  $\nu^{1/2}\otimes \pi_{00}$ in the appropriate Jacquet module of  $\nu^{1/2}\rtimes \pi_{00}$ is one, meaning that there is only one irreducible subrepresentation occurring in $\nu^{1/2}\rtimes \pi_{00}.$ We now address the cases in which $\pi_{00}$ is tempered and  $S_2$ does appear in its L-parameter. We argue by induction, separately on the parity of the appearance of $S_2$ in the L-parameter. We first address the even parity.

 \noindent Assume that $S_2$ appears in the L-parameter of $\pi_{00}$ with multiplicity $2h,\;h\ge 1.$ First, assume that $h=1.$ Then, $\pi_{00}\hookrightarrow \delta([\nu^{-1/2},\nu^{1/2}])\rtimes \pi_0,$ where $\pi_0$ is tempered, generic and $S_2$ does not appear in its L-parameter. Similarly as above, we get that each irreducible tempered subquotient $T$ of $\nu^{1/2}\rtimes \pi_{00}$ (which is actually a subrepresentation there, by the previous reasoning) is embedded in $\delta([\nu^{-1/2},\nu^{1/2}])\rtimes \zeta_0,$ where $\zeta_0$ is the unique subquotient of $\nu^{1/2}\rtimes \pi_0$ different from its Langlands quotient. But $\zeta_0$ is generic and has  $S_2$ in its Langlands parameter. This means that  $\delta([\nu^{-1/2},\nu^{1/2}])\rtimes \zeta_0$ is irreducible and generic, so $T$ is the unique irreducible generic subquotient of $\nu^{1/2}\rtimes \pi_0$ and thus appears with the multiplicity one there; we are finished with the basis of the induction. Now the induction step is performed virtually in the same fashion as the basis of the induction procedure, where we now assume that $S_2$ appears in the L-parameter of $\pi_{00}$ with multiplicity $2h+2$ so that it appears for $\zeta_0$ with multiplicity $2h.$ Now we finish the case with odd multiplicities. As a basis, we consider the case where $S_2$ appears in the L-parameter of $\pi_{00}$ with multiplicity one. The proof in this case is similar to the case in which $\pi_{00}$ is square-integrable and $S_2$ appears in the L-parameter of $\pi_{00}$ with multiplicity one, discussed above. The induction step is proved in the same way as in the case of even multiplicities.
\end{proof}
\begin{cor}
\label{cor:stdmod_l=2}Let $\pi=L(\gamma_{\psi}^{-1}\delta_1\nu^{s_1},\gamma_{\psi}^{-1}\delta_2\nu^{s_2},\ldots,\gamma_{\psi}^{-1}\delta_k\nu^{s_k};\sigma)$ be a $\psi$-generic, irreducible representation of $\widetilde{Sp(2n)}.$ If $l(\sigma)=2,$ the standard module conjecture holds for this representation, i.e., $\gamma_{\psi}^{-1}\delta_1\nu^{s_1}\times \gamma_{\psi}^{-1}\delta_2\nu^{s_2}\times \cdots\times \gamma_{\psi}^{-1}\delta_k\nu^{s_k}\rtimes \sigma$ is irreducible.
\end{cor}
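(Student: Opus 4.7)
The plan is to deduce irreducibility of the standard module from the principle that it suffices for every rank-one intertwining operator occurring in the factorization of the long intertwining operator $A(w_0)$ to be an isomorphism, which is in turn equivalent to irreducibility of the corresponding generalized rank-one induced representations.

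I would first invoke Theorem \ref{generic_reps_metaplectic}: the hypothesis that $\pi$ is $\psi$-generic guarantees that $\delta_i\nu^{s_i}\times \delta_j\nu^{s_j}$ and $\delta_i\nu^{s_i}\times \widetilde{\delta_j}\nu^{-s_j}$ are irreducible for all $i\neq j$, which dispatches every GL-type rank-one factor in the factorization (cf.~\eqref{rank_one_operators}). Part (i) of the same theorem further ensures that $\gamma_{\psi}^{-1}\delta_i\nu^{s_i}\rtimes \sigma$ is irreducible whenever $\delta_i\nu^{s_i}$ is not of the exceptional form $\delta([\nu^{1/2},\nu^{m+1/2}])$. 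Hence the only rank-one factor whose irreducibility remains in question corresponds to an index $i$ (unique, if it exists) at which $\delta_i\nu^{s_i}=\delta([\nu^{1/2},\nu^{m+1/2}])$.

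Next I would use the hypothesis $l(\sigma)=2$ to pin down $s_i$ in this remaining case. By Theorem 4.1 of \cite{Atobe_Gan}, $l(\sigma)=2$ forces $1\otimes S_2$ to occur in the L-parameter of $\sigma$, so condition (ii) of Theorem \ref{generic_reps_metaplectic} demands $4s_i\le 2$, i.e.~$s_i\le \frac{1}{2}$. Combined with the constraint $s_i=\frac{m+1}{2}\ge \frac{1}{2}$ inherent to the exceptional form, this forces $m=0$ and $\delta_i\nu^{s_i}=\nu^{1/2}$. Proposition \ref{prop_reducibility_1_2} then applies: since $l(\sigma)=2$, $\gamma_{\psi}^{-1}\nu^{1/2}\rtimes \sigma$ is irreducible, so the last rank-one factor is also an isomorphism.

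Putting everything together, every rank-one operator in the factorization of $A(w_0)$ is an isomorphism, hence $A(w_0)$ itself is a bijection from the standard module onto its $w_0$-conjugate. Since the image of $A(w_0)$ is by definition the Langlands quotient $\pi$ yet simultaneously coincides with the entire target, the standard module equals its unique irreducible quotient, which is precisely the desired irreducibility. There is no serious obstacle beyond careful bookkeeping: the main conceptual point is merely the observation that the equality $a_0=2$ implied by $l(\sigma)=2$ collapses the genericity constraint (ii) of Theorem \ref{generic_reps_metaplectic} to the single exceptional case $\delta_i\nu^{s_i}=\nu^{1/2}$, where Proposition \ref{prop_reducibility_1_2} is exactly the tool needed.
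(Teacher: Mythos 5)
Your proposal is correct and follows essentially the same route as the paper's own (very terse) proof, which simply cites Theorem \ref{generic_reps_metaplectic}, Proposition \ref{l(pi)}(2), and Proposition \ref{prop_reducibility_1_2}. You flesh out exactly those ingredients: the GL-conditions and part (i) of Theorem \ref{generic_reps_metaplectic} handle all but the exceptional rank-one factor, the constraint $l(\sigma)=2$ together with part (ii) of that theorem forces $s_i=\tfrac{1}{2}$ (which is the content of Proposition \ref{l(pi)}(2), so you are re-deriving rather than citing it), and Proposition \ref{prop_reducibility_1_2} then gives irreducibility of the remaining factor $\gamma_\psi^{-1}\nu^{1/2}\rtimes\sigma$.
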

\begin{proof}This follows from Theorem \ref{generic_reps_metaplectic}, especially on  requirements on the form of factors $\delta([\nu^{1/2},\nu^{m+1/2}]),$ Proposition \ref{l(pi)} 2) and Proposition \ref{prop_reducibility_1_2}.
\end{proof}

\section{Theta lifts of generic representations}
\label{sec:lifts}
We are now ready to determine the theta lifts of those generic representations which satisfy the standard module conjecture. By Corollary \ref{cor:stdmod_l=2} and Theorem \ref{generic_reps_metaplectic}, these include all the generic representations $\pi=L(\gamma_{\psi}^{-1}\delta_1\nu^{s_1},\gamma_{\psi}^{-1}\delta_2\nu^{s_2},\ldots,\gamma_{\psi}^{-1}\delta_k\nu^{s_k};\sigma)$ for which $l(\sigma)=2$, as well as those for which $l(\sigma)=0$ and no $\delta_i\nu^{s_i}$ is of the form $\delta([\nu^{1/2},\nu^{m+1/2}])$. The following proposition describes the lifts of such representations.
We note that the remaining generic representations, i.e., those with $l(\sigma)=0$ and $\delta_i\nu^{s_i} \cong \delta([\nu^{1/2},\nu^{m+1/2}])$ for some $i$, may or may not be irreducible (see Theorem \ref{standard_module_reducibility}). Their lifts are treated in Section \ref{sec:lifts2}.
\begin{prop}
\label{prop:lifts}
Let $\pi$ be an irreducible generic representation which satisfies the standard module conjecture, i.e.
\[
\pi = \gamma_{\psi}^{-1}\delta_1\nu^{s_1} \times \gamma_{\psi}^{-1}\delta_2\nu^{s_2} \times \dotsm \times \gamma_{\psi}^{-1}\delta_k\nu^{s_k} \rtimes \sigma.
\]
Let $l$ be an even integer such that $\theta_{l}(\pi) \neq 0$. If $l=2$, assume $\delta_i\nu^{s_i} \neq \nu^\frac{1}{2}$, $\forall i$. Then
\[
\delta_r\nu^{s_r} \times \dotsm \times \delta_1\nu^{s_1} \rtimes \theta_{l}(\sigma) \twoheadrightarrow \theta_{l}(\pi).
\]
Furthermore, if $\theta_{l}(\sigma) = L(\delta_k'\nu^{s_k'}, \dotsc \times \delta_1'\nu^{s_1'}; \tau)$, then $\theta_{l}(\pi)$ is uniquely determined by
\[
\theta_{l}(\pi) = L(\delta_1\nu^{s_1}, \dotsc, \delta_k\nu^{s_k}, \delta_1'\nu^{s_1'}, \dotsc, \delta_k'\nu^{s_k'}; \tau).
\]
Note that we use the notation for the Langlands quotient loosely, i.e., we assume that the $\{\delta_1\nu^{s_1},\delta_2\nu^{s_2},\ldots,\delta_{k}\nu^{s_{k}}\}$ are permuted among $\{\delta_1'\nu^{s_1'}, \dotsc, \delta_k'\nu^{s_k'}\}$ so that they appear in descending order.

In the exceptional case when $l=2$ and there is an index $i$ (unique, by Theorem \ref{generic_reps_metaplectic}) such that $\delta_i\nu^{s_i} = \nu^\frac{1}{2}$, we may assume $i=1$. We then have
\[
\theta_{2}(\pi) = L(\delta_2\nu^{s_2}, \dotsc, \delta_k\nu^{s_k}; \theta_0(\sigma)).
\]
\end{prop}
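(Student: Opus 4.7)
My plan is to split the argument into the non-exceptional case and the exceptional case, reducing the former to Lemma~\ref{cor5.3} and the latter to a dimension-shift identity.

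For the non-exceptional case, the main tool is Lemma~\ref{cor5.3}, applied iteratively. First I would invoke the standard module conjecture hypothesis together with Theorem~\ref{generic_reps_metaplectic}: conditions~\eqref{GL_conditions} and~(i) guarantee that every rank-one sub-induction is irreducible, so $\pi$ equals the full induced representation and the factors $\gamma_\psi^{-1}\delta_i\nu^{s_i}$ commute freely. In particular, any iterative decomposition $\pi = \gamma_\psi^{-1}\delta_{j}\nu^{s_j}\rtimes\pi_j'$ with $\pi_j'$ a sub-induced representation is irreducible. Next I would verify the non-exceptional hypothesis of Lemma~\ref{cor5.3}: no $\delta_i\nu^{s_i}$ equals $St_{n_i}\nu^{(l-n_i)/2}$, since for $l\leq 0$ this fails because $s_i>0 > (l-n_i)/2$, and for $l=2$ the only obstruction $\delta_i\nu^{s_i}=\nu^{1/2}$ is ruled out by hypothesis. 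Peeling off one factor at a time and applying Lemma~\ref{cor5.3} at each step produces the chain of surjections yielding
\[
\delta_k\nu^{s_k}\times\dotsm\times\delta_1\nu^{s_1}\rtimes\theta_l(\sigma)\twoheadrightarrow\theta_l(\pi).
\]

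Third, to identify $\theta_l(\pi)$ as a Langlands quotient, I would substitute $\theta_l(\sigma)=L(\delta_1'\nu^{s_1'},\dotsc,\delta_k'\nu^{s_k'};\tau)$, so that the standard module of $\theta_l(\sigma)$ surjects onto it. Composing surjections and reordering the $GL$-factors (possible by irreducibility of products $\delta_i\nu^{s_i}\times\delta_j'\nu^{s_j'}$ and their dual-twisted versions, which follows from \eqref{GL_conditions} applied to $\pi$ together with the analogous conditions that the Langlands data of $\theta_l(\sigma)$ satisfy) into descending order produces a standard module surjecting onto $\theta_l(\pi)$. Uniqueness of the Langlands quotient then forces the claimed identification.

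For the exceptional case $l=2$ with $\delta_1\nu^{s_1}=\nu^{1/2}$, set $\pi_0=\gamma_\psi^{-1}\delta_2\nu^{s_2}\times\dotsm\times\gamma_\psi^{-1}\delta_k\nu^{s_k}\rtimes\sigma$, so $\pi=\gamma_\psi^{-1}\nu^{1/2}\rtimes\pi_0$. Uniqueness in Theorem~\ref{generic_reps_metaplectic} guarantees that $\pi_0$ has no $\nu^{1/2}$-factor, so it satisfies the standard module conjecture, and Part~1 applied at $l=0$ gives $\theta_0(\pi_0)=L(\delta_2\nu^{s_2},\dotsc,\delta_k\nu^{s_k};\theta_0(\sigma))$. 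Thus the proposition reduces to the identity $\theta_2(\pi)=\theta_0(\pi_0)$, which is the expected dimension-shift statement: both lifts target $O(V_{2n+1})$, and the $\gamma_\psi^{-1}\nu^{1/2}$-factor of $\pi$ accounts for the rank difference. I would establish this via Kudla's filtration of the Jacquet module of the Weil representation $\omega_{2n+1,2(n+1),\psi}$ along the maximal parabolic with Levi $\widetilde{GL_1}\times\widetilde{Sp(2n)}$: the relevant top quotient in the filtration is isomorphic to $\gamma_\psi^{-1}\nu^{1/2}\otimes\omega_{2n+1,2n,\psi}$ (up to the appropriate character twist), so Frobenius reciprocity identifies the $\pi$-isotypic component of $\omega_{2n+1,2(n+1),\psi}$ with the $\pi_0$-isotypic component of $\omega_{2n+1,2n,\psi}$, giving $\Theta_2(\pi)\twoheadrightarrow\theta_0(\pi_0)$, hence $\theta_2(\pi)=\theta_0(\pi_0)$ by irreducibility.

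The main obstacle is the exceptional case. The non-exceptional case reduces mechanically to Lemma~\ref{cor5.3} once Theorem~\ref{generic_reps_metaplectic} is in hand. In the exceptional case, the delicate point is ruling out the lower piece of Kudla's filtration, which a priori could contribute via the $\widetilde{Sp(2n-2)}\times O(V_{2n-1})$ Weil representation; tracking the genuine character twists $\gamma_\psi^{-1}$ and the Weil indices, and showing that this lower piece gives no nonzero contribution to the $\pi$-isotypic quotient, is the technical heart of the argument. A seesaw identity for the dual pair $\widetilde{Sp(2(n+1))}\times O(V_{2n+1})$ paired against $\widetilde{Sp(2n)}\times(O(V_{2n+1})\times O(V_1))$ provides an alternative route bypassing most of this bookkeeping.
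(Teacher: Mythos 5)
Your non-exceptional argument has a decisive gap. Lemma~\ref{cor5.3}, iterated, produces a surjection whose source involves the \emph{full} lift $\Theta_{l}(\sigma)$, namely $\delta_1\nu^{s_1}\times\dotsm\times\delta_k\nu^{s_k}\rtimes\Theta_{l}(\sigma)\twoheadrightarrow\Theta_{l}(\pi)\twoheadrightarrow\theta_l(\pi)$; you have silently replaced $\Theta_l(\sigma)$ by the small lift $\theta_l(\sigma)$. For $l\in\{0,-2\}$ and a few other first occurrences the two agree because $\Theta_l(\sigma)$ is irreducible, but for the higher lifts $l\leq -4$ (and already at $l=2$ when $l(\sigma)=2$) the representation $\Theta_l(\sigma)$ is typically reducible, and a priori any of its subquotients could feed the map onto $\theta_l(\pi)$. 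Establishing that the participating subquotient is exactly $\theta_l(\sigma)$ is the technical core of the proof: it occupies the chain from \eqref{eq:epi0} to \eqref{eq:epi5} and Lemmas~\ref{lem:main}, \ref{lemma:really_subq}, \ref{lem:really_small_theta}, together with the four-case analysis of Section~\ref{subs:higher}. Your proposal omits this step entirely.

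Your reordering step also fails as stated. You assert the GL factors of $\pi$ can be interleaved with the Langlands data of $\theta_l(\sigma)$ in descending order because the products $\delta_i\nu^{s_i}\times\delta_j'\nu^{s_j'}$ are irreducible ``by \eqref{GL_conditions}.'' But \eqref{GL_conditions} constrains only pairwise products among the $\delta_i$'s themselves; the Langlands data of $\theta_l(\sigma)$ contribute new characters $\nu^{(l'-1)/2},\dotsc,\nu^{1/2}$ (or starting at $\nu^{3/2}$, $\nu^{5/2}$, etc., depending on the case) whose segments can be linked to some $\delta_i\nu^{s_i}$---for instance $\delta([\nu^{-1/2},\nu^{3/2}])$, which is an admissible $\delta_i\nu^{s_i}$, and $\nu^{5/2}$ are linked, so their product is reducible. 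Consequently the representation obtained by composing surjections is not a priori a standard module, and showing it nevertheless has a unique irreducible quotient is exactly what Lemmas~\ref{lem:zelevinsky}, \ref{lem:tricky_condition}, and \ref{lem:unique_quotient} are for; these are missing from your argument. Your treatment of the exceptional case $l=2$, $\delta_1\nu^{s_1}=\nu^{1/2}$ is, on the other hand, essentially the paper's own (Kudla filtration plus irreducibility of $\nu^{1/2}\rtimes\pi'$ giving $\Theta_2(\pi)\twoheadrightarrow\Theta_0(\pi')$), and the seesaw you mention is a reasonable alternative for that step.
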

The proof is distributed among the following two sections. We begin by considering the first non-zero lifts.

\subsection{The first lifts of a $\psi$-generic representation which satisfies the standard module conjecture}
\label{subs:std_mod_lifts}
We have two basic cases to consider: $l(\sigma) = 2$ and $l(\sigma) = 0$. We treat these cases separately. Recall that we also assume that none of the $\delta_i\nu^{s_i}$ is of the form $\delta([\nu^{1/2},\nu^{m+1/2}])$, $m\in \Z_{\ge 0}$ when $l(\sigma) = 0$ (we consider these exceptional representations in Section \ref{sec:lifts2}).

\medskip

\noindent\underline{Case 1}: $l(\sigma) = 0$.\\
We first look at the going-down tower. Proposition \ref{l(pi)} shows that $\Theta_{0}(\pi)$ is the first lift in this case. Repeatedly applying Lemma \ref{cor5.3} with $l=0$ like in Proposition \ref{l(pi)}, we get
\[
\delta_1\nu^{s_1}\times \delta_2\nu^{s_2}\times \cdots\times\delta_k\nu^{s_k}\rtimes\Theta_0(\sigma)\twoheadrightarrow \Theta_0(\pi) \twoheadrightarrow \theta_0(\pi).
\]
Since $\Theta_0(\sigma)$ is irreducible and tempered (by Proposition \ref{irreducibility_temp_lift}), this gives us the standard module for $\theta(\pi)$.

In the going-up tower, $\pi$ first appears when $l=-2$ (by the conservation relation). We use the same argument, this time with $l=-2$. We get
\[
\delta_1\nu^{s_1}\times \delta_2\nu^{s_2}\times \cdots\times\delta_k\nu^{s_k}\rtimes\Theta_{-2}(\sigma)\twoheadrightarrow \Theta_{-2}(\pi) \twoheadrightarrow \theta_{-2}(\pi).
\]
Note that $\Theta_{-2}(\sigma)$ is irreducible and tempered. Indeed, this follows easily from Lemma \ref{cor5.3}, using the tempered support of $\sigma$ and the irreducibility of $\Theta_{-2}(\pi_{00})$, where $\pi_{00}$ is the classical part of the tempered support of $\sigma$ (cf.~the remark after Proposition \ref{first_lifts}). Consequently, the left-hand side of the above map is the standard module for $\theta_{-2}(\pi)$.

\medskip

\noindent\underline{Case 2}: $l(\sigma) = 2$.\\
We employ the same strategy in this case. On the going-down tower we have $l(\pi) = 2$, by Proposition \ref{l(pi)}. To avoid technical difficulties, we initially assume that none of the $\delta_i\nu^{s_i}$ equals $\nu^\frac{1}{2}$. This eliminates the only exceptional case of Lemma \ref{cor5.3} for $l=2$, so we can use it to get
\[
\delta_1\nu^{s_1}\times \delta_2\nu^{s_2}\times \cdots\times\delta_k\nu^{s_k}\rtimes\Theta_2(\sigma)\twoheadrightarrow \Theta_2(\pi) \twoheadrightarrow \theta_2(\pi).
\]
In this case we do not know if $\Theta_2(\sigma)$ is irreducible, but we know that all of its subquotients are tempered \cite[Proposition 5.5]{Atobe_Gan}. We need to determine which subquotient $\tau$ of $\Theta_2(\sigma)$ participates in the above map. In fact, we will show that $\tau = \theta_2(\sigma)$.

To do this, we utilize Lemma \ref{cor5.3} in reverse direction to get
\[
\gamma_{\psi}^{-1}\delta_1\nu^{s_1}\times \gamma_{\psi}^{-1}\delta_2\nu^{s_2}\times \cdots\times \gamma_{\psi}^{-1}\delta_k\nu^{s_k}\rtimes \Theta_{-2}(\tau) \twoheadrightarrow \pi.
\]
If $\Theta_{-2}(\tau)$ is the first lift of $\tau$ to the tower of metaplectic groups, then we may again deduce that $\Theta_{-2}(\tau)$ is irreducible and tempered. The uniqueness of the standard module for $\pi$ now implies $\theta_{-2}(\tau) = \sigma$, which in turn gives us $\tau = \theta_2(\sigma)$. If $\Theta_{-2}(\tau)$ is not the first lift, i.e.~if $\Theta_{0}(\tau)$ is non-zero, then $\theta_{-2}(\tau)$ is not tempered, so we need a different argument.

In this case, one easily shows that $\Theta_{0}(\theta_2(\pi))$ is also non-zero. Thus, setting $l=0$ instead of $l=-2$ we get
\[
\gamma_{\psi}^{-1}\delta_1\nu^{s_1}\times \gamma_{\psi}^{-1}\delta_2\nu^{s_2}\times \cdots\times \gamma_{\psi}^{-1}\delta_k\nu^{s_k}\rtimes \Theta_{0}(\tau) \twoheadrightarrow \theta_{0}(\theta_2(\pi)).
\]
Since $\pi = \theta_{-2}(\theta_2(\pi))$ is a subquotient of $\gamma_{\psi}^{-1}\nu^{\frac{1}{2}}\rtimes\theta_{0}(\theta_2(\pi))$, heredity implies that $\theta_{0}(\theta_2(\pi))$ is generic. But then the above map shows that $\Theta_{0}(\tau)$ (which is irreducible and tempered) must also be generic, again by heredity. Using Theorem 1.3 (v) of \cite{Gan_Savin_Metaplectic_2012} we now get that $\tau$ is generic. Similarly, the fact that $\theta_{0}(\sigma)$ is a generic subquotient of $\nu^{-\frac{1}{2}} \rtimes \theta_2(\sigma)$ implies that $\theta_2(\sigma)$ is also generic.

We now know that $\theta_2(\sigma)$ and $\tau$ are both generic subquotients of $\Theta_2(\sigma)$. Since they both belong to the same $L$-packet (see Lemma 6.4 of \cite{Atobe_Gan}), this means that they can differ only by a twist of the determinant character. However, it is easy to show that this is not the case, and that $\theta_2(\sigma)$ and $\tau$ are in fact isomorphic. Indeed, this follows from the fact that both $\Theta_0(\tau)$ and $\Theta_0(\theta_2(\sigma))$ are non-zero.

It remains to comment on the exceptional case when one of the $\delta_i\nu^{s_i}$ equals $\nu^\frac{1}{2}$. Since the standard module of $\pi$ is assumed to be irreducible, $\nu^\frac{1}{2}$ appears at most once. Furthermore, we can rearrange the $\delta_i$'s in any order, so we may assume that $i = 1$. We thus have $\gamma_{\psi}^{-1}\nu^\frac{1}{2}\rtimes\pi' \cong \pi$ with $\pi' \cong \gamma_{\psi}^{-1}\delta_2\nu^{s_2}\times \cdots\times \gamma_{\psi}^{-1}\delta_k\nu^{s_k}\rtimes \sigma$. A simple application of Kudla's filtration (like the one used in Lemma \ref{cor5.3}) combined with the irreducibility of $\nu^\frac{1}{2} \rtimes \pi'$ now shows that we have
\[
\nu^\frac{1}{2} \rtimes \Theta_2(\pi') \twoheadrightarrow \Theta_2(\pi) \twoheadrightarrow \Theta_0(\pi') \twoheadrightarrow 0.
\]
This implies that $\theta_2(\pi)$ equals $\theta_0(\pi')$, which we have already determined in Case 1.

The first lift on the going-up tower appears when $l = -4$. As before, we have
\[
\delta_1\nu^{s_1}\times \delta_2\nu^{s_2}\times \cdots\times\delta_k\nu^{s_k}\rtimes\Theta_{-4}(\sigma)\twoheadrightarrow \Theta_{-4}(\pi) \twoheadrightarrow \theta_{-4}(\pi).
\]
We have two distinct subcases, depending on the multiplicity of $S_2$ in the parameter of $\sigma$. If the multiplicity is odd, then $\Theta_{-4}(\sigma)$ is the first non-zero lift on this tower, and is irreducible and tempered by the same argument already used for $\Theta_{-2}(\sigma)$ in Case 1. If the multiplicity is even, the lift $\theta_{-4}(\sigma)$ is non-tempered so we must proceed in a different manner. We treat this case in the following section.

In all cases of the above discussion, we have had (for a suitable value of $l$)
\[
\delta_1\nu^{s_1}\times \delta_2\nu^{s_2}\times \cdots\times\delta_k\nu^{s_k}\rtimes\theta_{l}(\sigma)\twoheadrightarrow \theta_{l}(\pi).
\]
Thus far, $\theta_l(\sigma)$ has been tempered, which allowed us to deduce that the left-hand side of the above map is the standard module for $\theta_{l}(\pi)$. In the next section, we show that the above map is still valid in all the remaining cases. Furthermore, we demonstrate how the standard module can be determined from this map despite the fact that $\theta_{l}(\sigma)$ is no longer tempered.

\subsection{The higher lifts of a $\psi$-generic representation which satisfies the standard module conjecture}
\label{subs:higher}
The previous section deals with the cases in which $l = 2, 0, -2$ and a part of the $l=-4$ case (recall that $l= 2n+1-m$). In this section we assume that $l \leqslant -4$. Adjusting the notation, we let $l \geqslant 4$ be even; we wish to determine $\pi' = \theta_{-l}(\pi)$. As before, we begin by applying Lemma \ref{cor5.3} to get
\begin{equation}
\label{eq:epi0}
\delta_1\nu^{s_1}\times \delta_2\nu^{s_2}\times \cdots\times\delta_k\nu^{s_k}\rtimes\Theta_{-l}(\sigma)\twoheadrightarrow \Theta_{-l}(\pi) \twoheadrightarrow \pi'.
\end{equation}
The technical difficulty here is that, in contrast with the previous cases, we do not know if $\Theta_{-l}(\sigma)$ is irreducible. In other words, we need to determine the irreducible subquotient of $\Theta_{-l}(\sigma)$ which participates in the above map. To do this, we introduce the tempered support of $\sigma$: we know that $\sigma$ is a subquotient of
\[
\delta_1' \times \dotsm \times \delta_l'\rtimes \pi_{00}
\]
where $\delta_1', \dotsc, \delta_l',\pi_{00}$ are irreducible discrete series representations. In fact, the above representation is semisimple. Thus, setting $\Delta = \delta_1' \times \dotsm \times \delta_l'$ and repeatedly applying Lemma \ref{cor5.3} again, we get
\[
\Delta \rtimes \Theta_{-l}(\pi_{00}) \twoheadrightarrow \Theta_{-l}(\sigma).
\]
This allows us to expand \eqref{eq:epi0} into
\begin{equation}
\label{eq:epi1}
\delta_1\nu^{s_1} \times \dotsm \times \delta_k\nu^{s_k} \times \Delta \rtimes \Theta_{-l}(\pi_{00}) \twoheadrightarrow \pi'.
\end{equation}
We now need an important technical lemma:
\begin{lem}
\label{lem:main}
The subquotient of $\Theta_{-l}(\pi_{00})$ which participates in \eqref{eq:epi1} is $\theta_{-l}(\pi_{00})$. In other words, we have
\begin{equation}
\label{eq:epi2}
\delta_1\nu^{s_1} \times \dotsm \times \delta_k\nu^{s_k} \times \Delta \rtimes \theta_{-l}(\pi_{00}) \twoheadrightarrow \pi'.
\end{equation}
\begin{proof}
This is Proposition 7.1 of \cite{bakic_generic}.
\end{proof}
\end{lem}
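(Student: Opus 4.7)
The plan is to descend the epimorphism \eqref{eq:epi1} from $\Theta_{-l}(\pi_{00})$ to its unique irreducible quotient $\theta_{-l}(\pi_{00})$. Setting $K = \ker(\Theta_{-l}(\pi_{00}) \twoheadrightarrow \theta_{-l}(\pi_{00}))$, the exactness of parabolic induction yields a short exact sequence
\[
0 \to \delta_1\nu^{s_1} \times \dotsm \times \delta_k\nu^{s_k} \times \Delta \rtimes K \to \delta_1\nu^{s_1} \times \dotsm \times \Delta \rtimes \Theta_{-l}(\pi_{00}) \to \delta_1\nu^{s_1} \times \dotsm \times \Delta \rtimes \theta_{-l}(\pi_{00}) \to 0,
\]
and it therefore suffices to prove that the first term maps to zero under the given epimorphism onto $\pi'$.

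My strategy is to produce, as an intermediate step, an epimorphism $\Delta \rtimes \theta_{-l}(\pi_{00}) \twoheadrightarrow \theta_{-l}(\sigma)$. Since $\sigma$ sits in its tempered support as a (semisimple) constituent of $\Delta \rtimes \pi_{00}$, I would apply a version of Lemma \ref{cor5.3} (adjusted for $l \leqslant -4$) together with the compatibility of the theta correspondence with parabolic induction encoded by Kudla's filtration to obtain exactly such a surjection. Substituting this into \eqref{eq:epi1} would yield
\[
\delta_1\nu^{s_1} \times \dotsm \times \delta_k\nu^{s_k} \rtimes \theta_{-l}(\sigma) \twoheadrightarrow \pi',
\]
after which \eqref{eq:epi2} follows by reversing the chain via Lemma \ref{cor5.3} once more.

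In case the above clean factorization fails for some boundary configuration (for example if some $\delta_i\nu^{s_i}$ hits the excluded Steinberg twist in Lemma \ref{cor5.3}), the backup plan is a direct Jacquet-module argument. Namely, if some irreducible subquotient $\tau$ of $K$ produced a nonzero map $\delta_1\nu^{s_1} \times \dotsm \times \Delta \rtimes \tau \to \pi'$, then by Frobenius reciprocity the constituent $\delta_1\nu^{s_1} \otimes \dotsm \otimes \delta_l' \otimes \tau$ would appear in the Jacquet module of $\pi'$ along the appropriate parabolic. I would then invoke Tadi\'c's structural formula to read off all admissible constituents of this Jacquet module, and use Howe duality together with the Atobe--Gan / Gan--Savin description of the ``classical piece'' of the Langlands data of $\pi' = \theta_{-l}(\pi)$ to force the classical factor to be $\theta_{-l}(\pi_{00})$, contradicting $\tau \neq \theta_{-l}(\pi_{00})$.

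The main obstacle will be controlling the internal structure of $\Theta_{-l}(\pi_{00})$ precisely enough to rule out all unwanted $\tau$. Fortunately, for $\pi_{00}$ an irreducible $\psi$-generic discrete series and $l \geqslant 4$ (so that we are well past the first occurrence), the results of Atobe--Gan and Gan--Savin describe $\Theta_{-l}(\pi_{00})$ essentially as a standard module with $\theta_{-l}(\pi_{00})$ as its Langlands quotient; this makes the Jacquet-module bookkeeping tractable and should force the descent to $\theta_{-l}(\pi_{00})$ uniquely, as asserted.
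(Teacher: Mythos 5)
The paper's proof is a one-line citation to Proposition 7.1 of \cite{bakic_generic}; you are attempting to reconstruct that argument, and there are two genuine gaps. Your primary route is circular: you want the intermediate epimorphism $\Delta \rtimes \theta_{-l}(\pi_{00}) \twoheadrightarrow \theta_{-l}(\sigma)$, but Lemma~\ref{cor5.3} only produces $\Delta \rtimes \Theta_{-l}(\pi_{00}) \twoheadrightarrow \Theta_{-l}(\sigma)$, and passing from full theta lifts to their Langlands quotients on \emph{both} sides of this map is precisely the type of descent that Lemma~\ref{lem:main} asserts. Moreover, even granting it, you cannot simply ``substitute'' it into \eqref{eq:epi1}: that would manufacture a \emph{new} epimorphism $\delta_1\nu^{s_1} \times \dotsm \times \delta_k\nu^{s_k} \times \Delta \rtimes \theta_{-l}(\pi_{00}) \twoheadrightarrow \pi'$, whereas what the lemma requires --- and what the proof of Lemma~\ref{lemma:really_subq} actually uses --- is that the \emph{given} map \eqref{eq:epi1} annihilates $\delta_1\nu^{s_1} \times \dotsm \times \delta_k\nu^{s_k} \times \Delta \rtimes K$ with $K$ your kernel. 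A freshly built surjection from the quotient does not show the original one factors through it.

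Your backup Jacquet-module plan is closer to the right kind of argument, but it rests on an overstatement: for a $\psi$-generic discrete series $\pi_{00}$ well past first occurrence, $\Theta_{-l}(\pi_{00})$ is \emph{not} ``essentially a standard module''. The known structural results (Mui\'{c}, Atobe--Gan) give a filtration coming from Kudla's filtration, uniqueness of the Langlands quotient, and temperedness of the remaining constituents --- but the full lift can have many such constituents and is not presented as a quotient of a single standard module. Ruling out each unwanted tempered $\tau \le K$ --- i.e., showing that no $\delta_1\nu^{s_1} \otimes \dotsm \otimes \delta_l' \otimes \tau$ can occur as a quotient of the relevant Jacquet module of $\pi'$ --- is exactly the nontrivial bookkeeping delegated to Proposition 7.1 of \cite{bakic_generic}, and your sketch announces this step without carrying it out.
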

The above map \eqref{eq:epi2} is not sufficient to uniquely determine $\pi'$. To do this, we have to find the standard module of $\pi'$. Before we start, let us return for a moment to \eqref{eq:epi0}. Our goal is to show two things (see Proposition \ref{prop:lifts}):
\begin{itemize}
\item the subquotient of $\Theta_{-l}(\sigma)$ which participates in that map is $\theta_{-l}(\sigma)$;
\item the standard module of $\pi'$ is obtained by adding $\delta_1\nu^{s_1}, \dotsc, \delta_k\nu^{s_k}$ to the standard module of $\theta_{-l}(\sigma)$ (and sorting the representations decreasingly with respect to the exponents).
\end{itemize}
The shape of $\theta_{-l}(\sigma)$, which is crucial in the ensuing caluculations, is completely determined by \cite[Theorems 4.3 and 4.5]{Atobe_Gan}. We compile the results of these theorems in the following proposition.
\begin{prop}
\label{prop:lifts_temp_Mp}
Let $\sigma$ be an irreducible tempered representation of the metaplectic group. Let $\phi$ denote its $L$-parameter; assume that $l > 0$ is even.
\begin{enumerate}[(i)]
\item On the going-down tower, $\theta_{0}(\sigma)$ is tempered and we have
	\[
	\nu^\frac{l-1}{2} \times \dotsm \times \nu^\frac{1}{2} \rtimes \theta_{0}(\sigma) \twoheadrightarrow \theta_{-l}(\sigma).
	\]
\item If $l(\sigma)=0$ then $\theta_{-2}(\sigma)$ is the first non-zero lift on the going-up tower; it is tempered. Furthermore, $\theta_{-l}(\sigma)$ is given by
	\[
	\nu^\frac{l-1}{2} \times \dotsm \times \nu^\frac{3}{2} \rtimes \theta_{-2}(\sigma) \twoheadrightarrow \theta_{-l}(\sigma).
	\]
\item Assume that $l(\sigma)=2$ and that $m_\phi(S_2)$ (the multiplicity of $S_2$ in $\phi$) is odd. Then $\theta_{-4}(\sigma)$ is the first lift on the going up tower; it is tempered. We have
		\[
	\nu^\frac{l-1}{2} \times \dotsm \times \nu^\frac{5}{2} \rtimes \theta_{-4}(\sigma') \twoheadrightarrow \theta_{-l}(\sigma).
	\]
\item Assume $l(\sigma)=2$ and that $m_\phi(S_2)=2h > 0$. Then there are no tempered lifts on the going-up tower and  we have
		\[
	\nu^\frac{l-1}{2} \times \dotsm \times \nu^\frac{5}{2}\times\text{St}_3\nu^\frac{1}{2} \times (\text{St}_2,h-1)\rtimes \theta_{-2}(\sigma') \twoheadrightarrow \theta_{-l}(\sigma).
	\]
Here $\sigma'$ denotes the (unique) irreducible tempered representation such that $\sigma \hookrightarrow (\text{St}_2,h) \rtimes \sigma'$ (also $(\text{St}_2,h) = \text{St}_2 \times \dotsm \times \text{St}_2$, $h$ times). Since the parameter of $\theta_{-2}(\sigma')$ contains $S_2$, the representation $(\text{St}_2,h-1)\rtimes \theta_{-2}(\sigma')$ is irreducible and tempered.
\end{enumerate}
\end{prop}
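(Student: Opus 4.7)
The plan is to translate Theorems 4.3 and 4.5 of \cite{Atobe_Gan}, which give the L-parameter of $\theta_{-l}(\sigma)$ for tempered $\sigma$, into the standard-module surjections displayed in the statement. In every case the strategy is the same: locate the first non-zero lift via the conservation relation, record its shape, and then climb the tower one hyperbolic plane at a time by iterating Lemma \ref{cor5.3}.

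For part (i), $\theta_{0}(\sigma)$ is irreducible and tempered by Proposition \ref{irreducibility_temp_lift}. The claimed surjection is then proved by induction on $l$, with the inductive step reducing to $\nu^{(l-1)/2} \rtimes \theta_{2-l}(\sigma) \twoheadrightarrow \theta_{-l}(\sigma)$. This latter surjection follows from an analysis of Kudla's filtration of the Weil representation restricted along the Siegel parabolic of $O(V_m)$; since $\nu^{(l-1)/2} \neq \text{St}_1\nu^{-(l-1)/2}$ for $l \geq 2$, the exceptional case of Lemma \ref{cor5.3} never arises. Parts (ii) and (iii) are handled identically once the first non-zero lift has been located via the conservation relation: at $l=-2$ in case (ii) and at $l=-4$ in case (iii), with both lifts tempered by Proposition \ref{irreducibility_temp_lift}, after which the same inductive climb applies verbatim.

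Case (iv) is the substantive one and will be the main obstacle, since here there are no tempered lifts on the going-up tower and the induction must instead be seeded by the explicit Langlands-quotient description of $\theta_{-4}(\sigma)$ extracted from Atobe-Gan. First, using the Moeglin-Tadi\'{c} embedding \eqref{embedding_discrete} applied to $\Theta_\psi(\sigma)$, one shows that $\sigma$ embeds into $(\text{St}_2,h) \rtimes \sigma'$ with $\sigma'$ tempered $\psi$-generic and $S_2$ absent from its L-parameter (so in particular $l(\sigma')=0$). Second, one must verify that $(\text{St}_2,h-1) \rtimes \theta_{-2}(\sigma')$ is irreducible and tempered: Proposition \ref{irreducibility_temp_lift} gives temperedness of $\theta_{-2}(\sigma')$, while $S_2$ enters its L-parameter because of the conservation shift from $\sigma'$ to $\theta_{-2}(\sigma')$, so the standard R-group argument for induction by $\text{St}_2$ on odd orthogonal groups yields irreducibility. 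With this base case in hand, the claimed surjection for general $l$ follows by iterating Lemma \ref{cor5.3} exactly as in part (i), the factors $\nu^{(l-1)/2}, \dotsc, \nu^{5/2}$ never triggering the exceptional hypothesis.
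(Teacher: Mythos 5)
The paper gives no proof of this proposition at all: the sentence immediately preceding it says that the shape of $\theta_{-l}(\sigma)$ is "completely determined by [Atobe-Gan, Theorems 4.3 and 4.5]" and that the proposition merely compiles those results. Those theorems describe the L-parameters of $\theta_{-l}(\sigma)$ for tempered $\sigma$, and the displayed surjections are then the Langlands-classification translation of those parameters. So you are supplying an argument where the paper supplies a citation, which is a genuinely different route; unfortunately the argument as written has two concrete gaps.

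The central gap is the inductive step. Lemma \ref{cor5.3} takes a surjection $\delta \rtimes \sigma \twoheadrightarrow \pi$ and transports it across the theta correspondence to $\delta \rtimes \Theta_l(\sigma) \twoheadrightarrow \Theta_l(\pi)$; it is not a relation between consecutive lifts $\theta_{2-l}(\sigma)$ and $\theta_{-l}(\sigma)$ of the same representation. A relation of the form $\nu^{(l-1)/2}\rtimes \theta_{2-l}(\sigma) \twoheadrightarrow \theta_{-l}(\sigma)$ does come out of Kudla's filtration, but only once one has argued that it is the small lift $\theta_{2-l}(\sigma)$, rather than some other constituent of the full lift $\Theta_{2-l}(\sigma)$, that participates — this is exactly the subtlety the paper must handle by hand elsewhere (Lemma \ref{lem:really_small_theta}), and it is delicate in case (iv), where $\theta_{2-l}(\sigma)$ is itself non-tempered, so the multiplicity-one arguments that work in the tempered range are unavailable. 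Your sketch does not address this point, and the condition you check, $\nu^{(l-1)/2}\ne \mathrm{St}_1\nu^{-(l-1)/2}$, is in any case not the exclusion in Lemma \ref{cor5.3}, which reads $\delta \ncong \mathrm{St}_k\nu^{(l-k)/2}$ with $l$ denoting the theta index, not the exponent.

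A secondary gap is in case (iv): you produce $\sigma'$ via the Moeglin-Tadi\'{c} embedding \eqref{embedding_discrete} applied to $\Theta_\psi(\sigma)$, but that embedding is stated for \emph{generic} discrete series, whereas the proposition assumes only that $\sigma$ is tempered (not generic, and not in discrete series). The existence and uniqueness of $\sigma'$, and the fact that $l(\sigma')=0$, should instead come from the general description of the tempered support and L-packets of $\sigma$ via Arthur's classification, not from the generic-case embedding.
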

Our proof starts by analyzing the map \eqref{eq:epi2} established by Lemma \ref{lem:main}. We have four cases depending on the shape of $\sigma$, each of them corresponding to one of the cases of the previous Proposition. Before beginning the case-by-case analysis, we state a useful technical lemma:
\begin{lem}
\label{lem:zelevinsky}
\begin{enumerate}[(i)]
\item Let $a \leqslant c \leqslant b < d\in \mathbb{R}$ be congruent mod $\mathbb{Z}$. Then
\[
\zeta(c,d) \times \delta([\nu^a,\nu^b]) \quad \text{and} \quad \delta([\nu^a,\nu^b])\times \zeta(c,d)
\]
are irreducible and isomorphic. (We say that two or more numbers are congruent modulo $\mathbb{Z}$ if their difference is an integer). Note that $a \leqslant c \leqslant b < d$ implies that the segment $[c,d]$ intersects $[a,b]$ from "above".
\item If $[a,b]$ and $[c,d]$ are not linked, then \[
\zeta(c,d) \times \delta([\nu^a,\nu^b]) \quad \text{and} \quad \delta([\nu^a,\nu^b]) \times \zeta(c,d)
\]
are irreducible and isomorphic.
\item When the two segments are disjoint but linked ($c = b+1$), then $\delta([\nu^a,\nu^b])\times \zeta(b+1,d)$ has exactly two irreducible subquotients:
\[
L(\nu^d, \dotsc, \nu^{b+1}, \delta([\nu^a,\nu^b]))\quad \text{and} \quad  L(\nu^d, \dotsc, \nu^{b+2}, \delta([\nu^a,\nu^{b+1}])).
\]
\end{enumerate}
\end{lem}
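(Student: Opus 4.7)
My plan is to deduce all three parts from standard Zelevinsky theory for representations of $GL$. The key tool is the criterion for reducibility of mixed-type segment products: $\delta(\Delta_1) \times \zeta(\Delta_2)$ is reducible precisely when the segments $\Delta_1$ and $\Delta_2$ are juxtaposed (disjoint and adjacent). Combined with the rank-one commutativity $\nu^i \times \delta([\nu^a,\nu^b]) \cong \delta([\nu^a,\nu^b]) \times \nu^i$, valid whenever $i \notin \{a-1, b+1\}$, this criterion directly handles the irreducibility assertions in (i) and (ii) and forces the reducibility of (iii).

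For (ii) in the sub-case $[c,d] \subseteq [a,b]$ (and when the two segments lie on different cuspidal lines), I would embed $\zeta(c,d) \hookrightarrow \nu^c \times \nu^{c+1} \times \dotsm \times \nu^d$ and commute each character past $\delta([\nu^a,\nu^b])$ one at a time. Since no $i \in [c,d]$ equals $a-1$ or $b+1$, every rank-one step is an isomorphism, which yields both irreducibility and the commutation $\delta \times \zeta \cong \zeta \times \delta$. For the remaining nested sub-case $[a,b] \subseteq [c,d]$ and for part (i) (segments overlapping from above without being juxtaposed), I would invoke the mixed-type Zelevinsky criterion directly, complementing it by a Jacquet-module uniqueness argument to show that both orderings of the product share the same single cosocle and socle.

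For (iii), with $c = b+1$, reducibility is immediate from the criterion, and the two constituents can be exhibited as follows. The first, $L(\nu^d, \dotsc, \nu^{b+1}, \delta([\nu^a,\nu^b]))$, is the Langlands quotient of the standard module $\nu^d \times \dotsm \times \nu^{b+1} \times \delta([\nu^a,\nu^b])$, which surjects onto $\delta([\nu^a,\nu^b]) \times \zeta(b+1,d)$ via the map $\nu^d \times \dotsm \times \nu^{b+1} \twoheadrightarrow \zeta(b+1,d)$. The second, $L(\nu^d, \dotsc, \nu^{b+2}, \delta([\nu^a,\nu^{b+1}]))$, arises from the embedding $\delta([\nu^a,\nu^{b+1}]) \hookrightarrow \nu^{b+1} \times \delta([\nu^a,\nu^b])$; using part (ii) to commute $\nu^{b+2}, \dotsc, \nu^d$ through $\delta([\nu^a,\nu^{b+1}])$ and then identifying the resulting standard module produces this Langlands quotient as a constituent of $\delta([\nu^a,\nu^b]) \times \zeta(b+1,d)$.

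The principal obstacle I anticipate is proving that exactly two constituents appear in (iii). My plan is to bound the composition length by computing the Jacquet module of $\delta([\nu^a,\nu^b]) \times \zeta(b+1,d)$ with respect to a suitable maximal parabolic via the Bernstein--Zelevinsky geometric lemma (equivalently, the Tadi\'c formula), then match each piece appearing there with the Langlands data of one of the two candidates. Since any hypothetical third irreducible constituent would have to be supported on the same cuspidal multiset $\{\nu^a, \dotsc, \nu^d\}$ and share the same Jacquet-module signature with respect to the chosen parabolic, verifying that no such additional Langlands data can actually occur will establish the length-two assertion.
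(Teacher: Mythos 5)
The paper itself offers no proof here: it simply cites Lemma 6.1 and Remarks 6.3 and 6.4 of \cite{bakic_generic}. Your plan to deduce the lemma from the standard mixed-type reducibility criterion (that $\delta(\Delta_1)\times\zeta(\Delta_2)$ reduces precisely when $\Delta_1,\Delta_2$ are juxtaposed) together with rank-one commutation is therefore a genuine proof where the paper gives none, and it does handle parts (i) and (ii) cleanly: in both cases the two segments are not juxtaposed, so the product is irreducible, and irreducibility of $\pi_1\times\pi_2$ forces $\pi_1\times\pi_2\cong\pi_2\times\pi_1$.

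There is, however, a concrete gap in your argument for (iii). You propose to exhibit $L(\nu^d,\dotsc,\nu^{b+2},\delta([\nu^a,\nu^{b+1}]))$ by "using part (ii) to commute $\nu^{b+2},\dotsc,\nu^d$ through $\delta([\nu^a,\nu^{b+1}])$." But the segments $[b+2,b+2]$ and $[a,b+1]$ are disjoint and adjacent — exactly the juxtaposed configuration — so $\nu^{b+2}\times\delta([\nu^a,\nu^{b+1}])$ is itself reducible and part (ii) does not apply; the very first commutation in your chain fails. You would need a different device for this step (for instance, an induction on $d-b$ exploiting the embedding $\zeta(b+1,d)\hookrightarrow\nu^{b+1}\times\zeta(b+2,d)$ and the subrepresentation $\delta([\nu^a,\nu^{b+1}])\hookrightarrow\delta([\nu^a,\nu^b])\times\nu^{b+1}$, tracking which constituents survive). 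The length-two assertion, which you rightly identify as the main obstacle, is also only sketched: the cuspidal support of $\delta([\nu^a,\nu^b])\times\zeta(b+1,d)$ is the full segment $[a,d]$, so the underlying principal series has $2^{d-a}$ constituents, and the Jacquet-module count you invoke must actually rule out each of the other $2^{d-a}-2$ of them; that bookkeeping is precisely where the work lies, and it is not carried out.
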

\begin{proof}
This is Lemma 6.1 and Remarks 6.3 and 6.4 of \cite{bakic_generic}.
\end{proof}

\bigskip

\noindent\underline{{Case 1:}{ the going-down tower}}\\
According to the above Proposition, in this case $\theta_{-l}(\pi_{00})$ is the Langlands quotient of
	\[
	\nu^\frac{l-1}{2} \times \dotsm \times \nu^\frac{1}{2} \rtimes \theta_{0}(\pi_{00}).
	\]
This also implies that $\theta_{-l}(\pi_{00})$ is the unique quotient of
\[
\zeta(\frac{1}{2},\frac{l-1}{2}) \rtimes \theta_{0}(\pi_{00}).
\]
Combining this with the map \eqref{eq:epi2} we get
\[
\delta_1\nu^{s_1} \times \dotsm \times\delta_k\nu^{s_k} \times \Delta \times \zeta(\frac{1}{2},\frac{l-1}{2}) \rtimes \theta_{0}(\pi_{00}) \twoheadrightarrow \pi'.
\]
We now use Lemma \ref{lem:zelevinsky}: $\zeta(\frac{1}{2},\frac{l-1}{2})$ can switch places with all the $\delta_i'$ appearing in $\Delta$. This means that we can write
\begin{equation}
\label{eq:epi3}
\delta_1\nu^{s_1} \times \dotsm \times \delta_k\nu^{s_k} \times\zeta(\frac{1}{2},\frac{l-1}{2}) \times \Delta \rtimes \theta_{0}(\pi_{00}) \twoheadrightarrow \pi'.
\end{equation}
Finally, we observe that there is an irreducible subquotient $\tau$ of $\Delta \rtimes \theta_{0}(\pi_{00})$ such that
\begin{equation}
\label{eq:epi4}
\delta_1\nu^{s_1} \times \dotsm \times \delta_k\nu^{s_k} \times\zeta(\frac{1}{2},\frac{l-1}{2}) \rtimes \tau \twoheadrightarrow \pi'.
\end{equation}
Note that $\tau$ is tempered, because $\theta_{0}(\pi_{00})$ is, too (moreover, in this case, $\theta_{0}(\pi_{00})$ is in discrete series), as are all the irreducible subquotients of $\Delta$. We now claim the following:
\begin{lem}
\label{lem:unique_quotient}
The representation appearing on the left-hand side of \eqref{eq:epi4} has a unique irreducible quotient.
\end{lem}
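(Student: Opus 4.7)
The plan is to reduce to a standard module and invoke the Langlands classification. First, since $\zeta(\tfrac{1}{2},\tfrac{l-1}{2})$ is by definition the Langlands quotient of the $GL$-standard module $\nu^{(l-1)/2} \times \nu^{(l-3)/2} \times \cdots \times \nu^{1/2}$, the exactness of parabolic induction yields a surjection
\[
A := \delta_1 \nu^{s_1} \times \cdots \times \delta_k \nu^{s_k} \times \nu^{(l-1)/2} \times \cdots \times \nu^{1/2} \rtimes \tau \;\twoheadrightarrow\; \delta_1 \nu^{s_1} \times \cdots \times \delta_k \nu^{s_k} \times \zeta(\tfrac{1}{2},\tfrac{l-1}{2}) \rtimes \tau,
\]
reducing the problem to showing that $A$ has a unique irreducible quotient.

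Second, $A$ is parabolically induced from a collection of essentially square-integrable $GL$-factors (the $\delta_i \nu^{s_i}$ together with the characters $\nu^{j/2}$), all of which have strictly positive exponents, together with the tempered representation $\tau$. Merging the multisets $\{s_1,\dotsc,s_k\}$ and $\{(l-1)/2, (l-3)/2, \dotsc, 1/2\}$ into a single weakly decreasing sequence and rearranging the $GL$-factors accordingly produces a \emph{genuine} standard module $A^{\mathrm{std}}$, which by the Langlands classification for classical/metaplectic groups has a unique irreducible quotient $L$. The plan is then to identify the unique quotient of $A$ with $L$.

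Third, the passage from $A$ to $A^{\mathrm{std}}$ is realized as a composition of rank-one intertwining operators that successively swap adjacent $GL$-factors. I verify that each swap is an isomorphism of representations, so that $A \cong A^{\mathrm{std}}$, by invoking: (a) the irreducibility conditions \eqref{GL_conditions} from Theorem \ref{generic_reps_metaplectic} (for swaps among the $\delta_i \nu^{s_i}$); (b) disjointness of cuspidal supports (for swaps between $\delta_i \nu^{s_i}$ with $\rho_i \neq 1_{GL(1)}$ and the characters $\nu^{j/2}$); and (c) Lemma \ref{lem:zelevinsky}(i)--(ii), using the hypothesis $\delta_i\nu^{s_i} \neq \delta([\nu^{1/2},\nu^{m+1/2}])$ from Theorem \ref{generic_reps_metaplectic}, which forces $a_i \neq \tfrac{1}{2}$ and thereby excludes the ``disjoint but linked'' configuration of Lemma \ref{lem:zelevinsky}(iii) (for swaps involving $\delta_i \nu^{s_i}$ with $\rho_i = 1_{GL(1)}$). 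Once $A \cong A^{\mathrm{std}}$ is established, the unique irreducible quotient $L$ of $A^{\mathrm{std}}$ descends along the surjection to the original representation, which therefore has $L$ as its unique irreducible quotient.

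The main obstacle is step (c) of the third paragraph: in the unramified case $\rho_i = 1_{GL(1)}$ the segment $[a_i,b_i]$ attached to $\delta_i\nu^{s_i}$ may lie in complicated position relative to $[\tfrac{1}{2},\tfrac{l-1}{2}]$, and one must carefully trace the intermediate orderings that arise during the rearrangement to confirm that at no stage are two adjacent factors in the linked configuration of Lemma \ref{lem:zelevinsky}(iii). The constraint $a_i \neq \tfrac{1}{2}$, combined with the positivity $s_i > 0$ and the decreasing order of the $(l-j)/2$'s, should suffice to keep every intermediate swap within cases (i) or (ii) of Lemma \ref{lem:zelevinsky}, securing the desired isomorphism.
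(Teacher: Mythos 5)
There is a genuine gap in step (c). You claim that all the rank-one swaps from $A$ to $A^{\mathrm{std}}$ are isomorphisms and that the constraint $a_i \neq \tfrac{1}{2}$ ensures you stay in cases (i)--(ii) of Lemma \ref{lem:zelevinsky}. This is false: the problematic ``disjoint but linked'' configuration in Lemma \ref{lem:zelevinsky}(iii) concerns $c = b+1$, i.e.\ the \emph{top} endpoint $b_i$ of the segment $[a_i,b_i]$, which your hypothesis on $a_i$ does not control. Concretely, take $\delta_1\nu^{s_1} = \delta([\nu^{-1/2},\nu^{5/2}]) = St_4\nu^1$ (so $a_1 = -\tfrac{1}{2} \neq \tfrac{1}{2}$, $s_1 = 1 > 0$, and $\delta_1\nu^{s_1}$ is not of the excluded form $\delta([\nu^{1/2},\nu^{m+1/2}])$), and $l = 10$. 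To move $\delta_1\nu^{s_1}$ to its slot in $A^{\mathrm{std}}$ between $\nu^{3/2}$ and $\nu^{1/2}$, it must pass $\nu^{7/2}$, but $\delta([\nu^{-1/2},\nu^{5/2}]) \times \nu^{7/2}$ is reducible (the segments $[-\tfrac{1}{2},\tfrac{5}{2}]$ and $[\tfrac{7}{2},\tfrac{7}{2}]$ are linked). Moreover, one cannot reorder the $\nu^{j/2}$'s to avoid this, since consecutive $\nu^{j/2}$'s are themselves linked. In fact $A \ncong A^{\mathrm{std}}$ in this example: at the $GL$ level, $\delta([\nu^{-1/2},\nu^{5/2}]) \times \nu^{7/2}$ has $\delta([\nu^{-1/2},\nu^{7/2}])$ as its unique irreducible quotient and the Langlands quotient as its subrepresentation, which is exactly the reverse of the cosocle/socle structure of $\nu^{7/2}\times\delta([\nu^{-1/2},\nu^{5/2}])$. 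So the intermediate representation $A$ and the standard module $A^{\mathrm{std}}$ differ, and the isomorphism you need is not available.

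The paper's proof sidesteps precisely this obstruction. It does not attempt to show $A \cong A^{\mathrm{std}}$, nor does it lift $\zeta(\tfrac{1}{2},\tfrac{l-1}{2})$ to $\nu^{(l-1)/2}\times\dotsm\times\nu^{1/2}$ at the outset; instead it shows that the representation in \eqref{eq:epi4} is a \emph{quotient} of the appropriate standard module, which already suffices for a unique irreducible quotient. The mechanism is Lemma \ref{lem:tricky_condition}: one splits $\zeta(c,d)$ at a carefully chosen cut point $s$, moving only the upper half $\zeta(s,d)$ past $\delta([\nu^a,\nu^b])$ (Lemma \ref{lem:zelevinsky}(i) then applies because $[s,d]$ intersects $[a,b]$ from ``above''), and iterates. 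The condition $a_i \neq \tfrac{1}{2}$ is used in that lemma to guarantee $c \neq b+1$ at the first step and that the cut points stay below the mid-exponents of later $\delta_i\nu^{s_i}$'s. If you want to salvage your approach along your own lines, you would need to replace the isomorphism claim $A \cong A^{\mathrm{std}}$ by the weaker assertion that your target representation is a quotient of $A^{\mathrm{std}}$, which essentially forces you back to the paper's splitting argument.
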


\begin{proof}
We prove that the representation in question is itself a quotient of a standard module, and the conclusion follows. This standard module is obtained by inserting $\nu^\frac{l-1}{2}, \nu^\frac{l-3}{2}, \dotsc, \nu^\frac{1}{2}$ between $\delta_1\nu^{s_1}, \dotsc, \delta_k\nu^{s_k}$ so that the decreasing order is preserved. To show this, let $[\rho\nu^a,\rho\nu^b]$ be the segment which defines $\delta_1\nu^{s_1}$ (in particular, we have $s_1 = \frac{a+b}{2}$). We assume that $\rho$ is the trivial representation; for $\rho \neq \mathbb{1}$ we only need a simpler version of the following argument. If $s_1 \geqslant \frac{l-1}{2}$ then the representation in question is a quotient of the standard module
\[
\delta_1\nu^{s_1} \times \dotsm \times \delta_k\nu^{s_k} \times \nu^\frac{l-1}{2} \times \dotsm \nu^\frac{1}{2} \rtimes \tau,
\]
and we are done. If $s_1 < \frac{l-1}{2}$ we use the following observation based on Lemma \ref{lem:zelevinsky}.
\begin{lem}
\label{lem:tricky_condition}
Let $\sigma$ be an irreducible representation of $O(V)$ and assume that
\[
A \times \delta([\nu^a,\nu^b]) \times \zeta(c,d) \rtimes \sigma_0 \twoheadrightarrow \sigma
\]
for $\frac{a+b}{2} \leqslant d$ and some representations $A$ and $\sigma_0$ . If
\[
\label{eq:tricky1}\tag{i}
c \neq b+1
\]
then setting $s =\mathrm{min}\{s''\in[c,d]:s''\ge \frac{a+b}{2}\}$ we have
\[
A \times \zeta(s,d) \times \delta([\nu^a,\nu^b]) \times \zeta(c,s-1) \rtimes \sigma_0 \twoheadrightarrow \sigma.
\]
Here, the segment $[c,s-1]$ can be empty. Assume, a fortiori, that
\[
\label{eq:tricky2}\tag{ii}
s-1 \lneqq \frac{a+b}{2}
\]
(notice that this implies \eqref{eq:tricky1}). Then, for any $\delta([\nu^{a'},\nu^{b'}])\times \zeta(s,d)$ with $\frac{a'+b'}{2}\geqslant \frac{a+b}{2}$ the number $s'=\mathrm{min}\{s''\in[s,d]:s''\geqslant \frac{a'+b'}{2}\}$ also satisfies the above condition \eqref{eq:tricky2} with respect to $[a',b']$.
\end{lem}

\begin{proof}
We know that $\zeta(c,d)$ is a quotient of $\zeta(s,d) \times \zeta(c,s-1)$, so we have
\[
A \times \delta([\nu^a,\nu^b]) \times \zeta(s,d) \times \zeta(c,s-1) \rtimes \sigma_0 \twoheadrightarrow \sigma.
\]
If \eqref{eq:tricky1} holds, then Lemma \ref{lem:zelevinsky} shows that $\delta([\nu^a,\nu^b])$ and $\zeta(s,d)$ can switch places. We thus get
\[
A \times \zeta(s,d) \times \delta([\nu^a,\nu^b]) \times \zeta(c,s-1) \rtimes \sigma_0 \twoheadrightarrow \sigma,
\]
as required.

For the second part of the claim, assume that $s$ satisfies condition \eqref{eq:tricky2}. Then $s' - 1 \geqslant \frac{a'+b'}{2}$ would imply
\[
s' -1 \geqslant \frac{a'+b'}{2} \geqslant \frac{a+b}{2},
\]
which (since $s'$ is defined to be minimal) implies $s' = s$. However, this forces $s-1 \geqslant \frac{a+b}{2}$, contradicting \eqref{eq:tricky2}.
\end{proof}

Inductively applying Lemma \ref{lem:tricky_condition}---first with $\delta([\nu^a,\nu^b]) = \delta_1\nu^{s_1}$ and $[c,d] = [\frac{1}{2},\frac{l-1}{2}]$, then $\delta([\nu^a,\nu^b]) = \delta_2\nu^{s_2}$ and $[c,d]
= [s,\frac{l-1}{2}]$, etc.---we show that the representation appearing in  \eqref{eq:epi4} is indeed a quotient of a standard representation. This proves Lemma \ref{lem:unique_quotient}.
\end{proof}

We have now found the standard module of $\pi'$: the representations $\nu^\frac{l-1}{2},\allowbreak \nu^\frac{l-3}{2}, \dotsc, \nu^\frac{1}{2}$ are simply inserted among $\delta_1\nu^{s_1}, \dotsc, \delta_k\nu^{s_k}$ so that the exponents form a decreasing sequence. The only thing that remains to be determined is the tempered part, $\tau$.

We now know that the left-hand side of \eqref{eq:epi4} has a unique irreducible quotient. Therefore, we have
\begin{equation}
\label{eq:epi5}
\delta_1\nu^{s_1} \times \dotsm \times \delta_k\nu^{s_k} \rtimes \tau' \twoheadrightarrow \pi'
\end{equation}
where $\tau'$ is the unique irreducible quotient of $\zeta(\frac{1}{2},\frac{l-1}{2}) \rtimes \tau$ (that is, the Langlands quotient of $\nu^\frac{l-1}{2} \times \dotsm \times \nu^\frac{1}{2} \rtimes \tau$). It is now important to note the following:
\begin{lem}
\label{lemma:really_subq}
The representation $\tau'$ is a subquotient of $\Theta_{-l}(\sigma)$.
\end{lem}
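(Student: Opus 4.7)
My goal is to show that $\tau'$ is a subquotient of $\Theta_{-l}(\sigma)$ by identifying $\tau$ with $\theta_0(\sigma)$; combined with Proposition \ref{prop:lifts_temp_Mp}(i), this yields $\tau'=\theta_{-l}(\sigma)$, which is tautologically an irreducible subquotient of $\Theta_{-l}(\sigma)$. By Lemma \ref{cor5.3} and Proposition \ref{irreducibility_temp_lift}, the semisimple decompositions $\Delta\rtimes\pi_{00}=\bigoplus_i\sigma_i^*$ and $\Delta\rtimes\theta_0(\pi_{00})=\bigoplus_i\tau_i$ are matched via the theta correspondence, with $\theta_0(\sigma_i^*)=\tau_i$; in particular $\sigma=\sigma_{i_0}^*$ and $\theta_0(\sigma)=\tau_{i_0}$ for some index $i_0$. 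Proposition \ref{prop:lifts_temp_Mp}(i), applied to each $\sigma_i^*$, identifies $\theta_{-l}(\sigma_i^*)$ with the Langlands quotient $L(\nu^{(l-1)/2},\ldots,\nu^{1/2};\tau_i)$.

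The main step is to show that the specific summand $\tau$ from \eqref{eq:epi4} satisfies $\tau=\tau_{i_0}$. I would apply Kudla's filtration of the Jacquet module of the Weil representation along the parabolic $P_l\subset O(V_{2n+1+2l})$ with Levi $GL(l,F)\times O(V_{2n+1})$. After taking the $\sigma$-isotypic quotient on the metaplectic factor and iterating through the Borel of $GL(l,F)$, one obtains a constituent of the form $\nu^{(l-1)/2}\otimes\cdots\otimes\nu^{1/2}\otimes\Theta_0(\sigma)$ in the appropriate Jacquet module of $\Theta_{-l}(\sigma)$. Since $l(\sigma)=0$, Proposition \ref{irreducibility_temp_lift} gives $\Theta_0(\sigma)=\theta_0(\sigma)$, so this constituent is $\nu^{(l-1)/2}\otimes\cdots\otimes\nu^{1/2}\otimes\theta_0(\sigma)$. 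Frobenius reciprocity then forces the irreducible subquotient of $\Theta_{-l}(\sigma)$ that surjects onto $\pi'$ via the chain \eqref{eq:epi0}--\eqref{eq:epi4} to have $\zeta(\tfrac{1}{2},\tfrac{l-1}{2})\otimes\theta_0(\sigma)$ at the top of its Jacquet module; this pins $\tau$ down to be $\theta_0(\sigma)$ rather than any other summand $\tau_j$ with $j\neq i_0$.

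The main obstacle is the compatibility bookkeeping in this last step: verifying that the commutation moves across $\Delta$ in \eqref{eq:epi3}---which use Lemma \ref{lem:zelevinsky} to interchange $\zeta(\tfrac{1}{2},\tfrac{l-1}{2})$ with the square-integrable segments $\delta_i'$ of $\Delta$---do not mix the summands of $\Delta\rtimes\theta_0(\pi_{00})$ in a way that obscures the identification $\tau=\theta_0(\sigma)$. This requires tracing Jacquet-module constituents carefully through the chain \eqref{eq:epi0}--\eqref{eq:epi4}, combined with the uniqueness of the standard module of $\pi'$ (established in Lemma \ref{lem:unique_quotient}) and the semisimplicity of the tempered inductions on both sides of the correspondence.
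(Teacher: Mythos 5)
Your plan diverges from the paper's proof, and the divergence introduces a circularity. The paper establishes this lemma by a direct diagram chase: it realizes the epimorphism in \eqref{eq:epi2} as the quotient (by $\Pi\times\Delta\rtimes\Theta^0$, where $\Theta^0$ is the maximal proper subrepresentation of $\Theta_{-l}(\pi_{00})$) of the composite $S\circ\mathrm{Ind}(T)$, which by construction factors through $\Pi\rtimes\Theta_{-l}(\sigma)$ via $\mathrm{Ind}(T)$. Because the chain is built this way, the specific irreducible $\tau'$ appearing in \eqref{eq:epi5} is seen to be realized already inside $\Theta_{-l}(\sigma)$. This argument requires no a priori knowledge of what $\tau$ actually is.

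You instead aim for the stronger statement $\tau=\theta_0(\sigma)$, whence $\tau'=\theta_{-l}(\sigma)\le\Theta_{-l}(\sigma)$. That would suffice, but the mechanism you propose for pinning $\tau$ down is circular. Your Frobenius-reciprocity step speaks of "the irreducible subquotient of $\Theta_{-l}(\sigma)$ that surjects onto $\pi'$ via the chain \eqref{eq:epi0}--\eqref{eq:epi4}." However, the $\tau'$ produced by that chain is, a priori, a subquotient of $\Delta\rtimes\theta_{-l}(\pi_{00})$, and it is not yet known to live inside $\Theta_{-l}(\sigma)$—equating the two is precisely what this lemma asserts. From \eqref{eq:epi0} alone you only know that \emph{some} irreducible subquotient $\tilde\tau$ of $\Theta_{-l}(\sigma)$ satisfies $\Pi\rtimes\tilde\tau\twoheadrightarrow\pi'$, with no control over its Langlands data; your Kudla-filtration argument would identify $\tilde\tau$ only if you already knew its standard module had the form $\nu^{(l-1)/2}\times\cdots\times\nu^{1/2}\rtimes(\text{tempered})$, and that information comes from \eqref{eq:epi4}, i.e., from the $\Delta\rtimes\theta_{-l}(\pi_{00})$ side. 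Supplying the identification $\tau'=\tilde\tau$ is therefore the missing step, not a consequence you may assume. Note also that the Jacquet-module argument you sketch is essentially the content of Lemma \ref{lem:really_small_theta}, which the paper applies only \emph{after} establishing the present lemma; keeping the two steps separate is what avoids the circularity. The "main obstacle" you flag—commutation across $\Delta$—is real but secondary; the genuine gap is in the step before it.
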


We will use variations of this observation in all the subsequent cases, so we give a detailed explanation here.
\begin{proof}
We revisit the maps we have used so far: \eqref{eq:epi0}, \eqref{eq:epi1}, \eqref{eq:epi2}, \eqref{eq:epi4} and \eqref{eq:epi5}. Let $\Pi$ denote $\delta_1\nu^{s_1} \times \dotsm \times \delta_k\nu^{s_k}$. Starting from
\[
T \colon \Delta \rtimes \Theta_{-l}(\pi_{00}) \twoheadrightarrow \Theta_{-l}(\sigma)
\]
we induce to obtain
\[
\text{Ind}(T)\colon \Pi \times \Delta \rtimes \Theta_{-l}(\pi_{00}) \twoheadrightarrow \Pi \rtimes \Theta_{-l}(\sigma).
\]
Composing this with \eqref{eq:epi0} (which is given by $S \colon \Pi \rtimes \Theta_{-l}(\sigma) \twoheadrightarrow \pi'$) we get \eqref{eq:epi1}:
\[
S \circ \text{Ind}(T): \Pi \times \Delta \rtimes \Theta_{-l}(\pi_{00}) \twoheadrightarrow \pi'.
\]
Lemma \ref{lem:main} shows that no subquotient of $\Theta_{-l}(\pi_{00})$ except $\theta_{-l}(\pi_{00})$ can participate in the above epimorphism; in other words, we have $\Pi \times \Delta \rtimes \Theta^0 \subseteq \ker S \circ \text{Ind}(T)$ where we have used $\Theta^0$ to denote the maximal proper subrepresentation of $\Theta_{-l}(\pi_{00})$.

Taking the quotient of $S \circ \text{Ind}(T)$ by $\Pi \times \Delta \rtimes \Theta^0$ we get a new map, \eqref{eq:epi2}:
\[
\widetilde{S \circ \text{Ind}(T)}: \Pi \times \Delta \rtimes \theta_{-l}(\pi_{00}) \twoheadrightarrow \pi'.
\]
By the construction of this map it is obvious that any subquotient $\tau'$ of $\Delta \rtimes \theta_{-l}(\pi_{00})$ participating in the above epimorphism must be a subquotient of $\Theta_{-l}(\sigma)$, so we get \eqref{eq:epi5}. This subquotient is written as a subquotient of $\zeta(\frac{1}{2},\frac{l-1}{2})  \rtimes \tau$ in \eqref{eq:epi4}, and Lemma \ref{lem:unique_quotient} shows that $\tau'$ is in fact a quotient of $\zeta(\frac{1}{2},\frac{l-1}{2}) \rtimes \tau$.
\end{proof}

Finally, it remains to verify the following claim.
\begin{lem}
\label{lem:really_small_theta}
The only subquotient of $\Theta_{-l}(\sigma)$ with standard module of the form $\nu^\frac{l-1}{2} \times \dotsm \times \nu^\frac{1}{2} \rtimes \tau$ is $\theta_{-l}(\sigma)$.
\end{lem}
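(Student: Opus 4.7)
The plan is to combine the uniqueness part of the Langlands classification with a Jacquet-module analysis based on Kudla's filtration. By Proposition \ref{prop:lifts_temp_Mp}(i) we already know that $\theta_{-l}(\sigma) = L(\nu^\frac{l-1}{2},\dotsc,\nu^\frac{1}{2};\theta_0(\sigma))$, and $\theta_0(\sigma)$ is irreducible and tempered by Proposition \ref{irreducibility_temp_lift}. Hence it suffices to show: whenever an irreducible subquotient $\tau' = L(\nu^\frac{l-1}{2},\dotsc,\nu^\frac{1}{2};\tau)$ of $\Theta_{-l}(\sigma)$ has the indicated standard-module form, then $\tau \cong \theta_0(\sigma)$.

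The first step is to translate the hypothesis on $\tau'$ into a Jacquet-module statement. By the Langlands classification, $\tau'$ embeds into the dual standard representation with reversed exponents, so iteratively taking Jacquet modules and applying Frobenius reciprocity shows that the Jacquet module of $\tau'$ along the standard parabolic of $O(V_m)$ with Levi $GL_1^l \times O(V_{m-2l})$ contains $\nu^\frac{l-1}{2} \otimes \nu^\frac{l-3}{2} \otimes \dotsm \otimes \nu^\frac{1}{2} \otimes \tau$ as an irreducible subquotient. By exactness of the Jacquet functor, the same holds for $\Theta_{-l}(\sigma)$.

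The core of the argument is a Kudla-filtration analysis, carried out by induction on $l$. For the base case $l = 2$, the representation $\Theta_{-2}(\sigma)$ is dominated by $\nu^\frac{1}{2} \rtimes \theta_0(\sigma)$; when $S_2$ does not appear in the $L$-parameter of $\sigma$ this induced representation is irreducible, and when it does it has length exactly two (by the argument of Lemma \ref{length_two}), with the subquotient other than $\theta_{-2}(\sigma)$ being tempered and hence not of the required Langlands shape. For the inductive step, Kudla's filtration decomposes the Jacquet module of $\Theta_{-l}(\sigma)$ along a maximal parabolic in terms of Jacquet modules of $\sigma$ on the metaplectic side paired with partial theta lifts on the orthogonal side. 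The top piece of this filtration, bearing the character $\nu^\frac{l-1}{2}$, is precisely $\nu^\frac{l-1}{2} \otimes \Theta_{-(l-2)}(\sigma)$; applying the induction hypothesis to $\Theta_{-(l-2)}(\sigma)$ forces the remainder of the Langlands data, namely $\nu^\frac{l-3}{2},\dotsc,\nu^\frac{1}{2}$ together with the tempered tail, to come from $\theta_{-(l-2)}(\sigma) = L(\nu^\frac{l-3}{2},\dotsc,\nu^\frac{1}{2};\theta_0(\sigma))$, which gives $\tau = \theta_0(\sigma)$. The remaining pieces of Kudla's filtration involve non-trivial Jacquet modules of $\sigma$ and bring in characters of the form $\nu^a$ with $a \neq \frac{l-1}{2}$, so they cannot supply the maximum exponent demanded by the hypothesis.

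The main obstacle is the combinatorial bookkeeping: one must verify carefully that no intermediate piece of Kudla's filtration can contribute the character $\nu^\frac{l-1}{2}$ on the initial $GL_1$-factor combined with a tempered tail of the required shape. This boils down to tracking the half-integer shifts that appear when extracting partial isotropic directions, and exploiting that $\sigma$ is tempered so that its metaplectic Jacquet modules have only non-positive exponents on the $GL$-part. This is parallel in spirit to the proof of Lemma \ref{lem:main}, so a similar case analysis on cuspidal support should close the argument.
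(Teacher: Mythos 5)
Your proposal matches the paper's approach in its essential structure: an inductive peeling argument via Kudla's filtration, using a Frobenius reciprocity/Jacquet module translation of the hypothesis on $\tau'$ and the temperedness of $\sigma$ (via Casselman's criterion) to rule out the subrepresentation piece $J^1$. This is exactly what the paper does. However, there are two issues worth flagging. First, a sign problem: since $\tau'$ embeds in the dual standard form $\nu^{-\frac{l-1}{2}}\times\dotsm\times\nu^{-\frac{1}{2}}\rtimes\tau$, Frobenius reciprocity gives that $R_{Q_1}(\tau')$ has a \emph{quotient} of the form $\nu^{\frac{1-l}{2}}\otimes\tau_1$, i.e., the relevant exponent on the $GL_1$-factor is negative; accordingly, the piece $J^0$ of Kudla's filtration that you want to hit is $\nu^{\frac{1-l}{2}}\otimes\omega_{m-2,2n}$. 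Your write-up with $\nu^{\frac{l-1}{2}}$ (positive) refers to the piece one would target after second adjunction with the \emph{opposite} parabolic, and is not the piece in the Jacquet module of the Langlands quotient that feeds into Kudla's filtration; as written this targets the wrong term. Second, the base case $l=2$ you choose is circular: the claim that $\Theta_{-2}(\sigma)$ is dominated by $\nu^{\frac{1}{2}}\rtimes\theta_0(\sigma)$ is itself obtained from exactly the Kudla-filtration step you employ in the inductive part, so you are not actually stopping earlier. The paper runs the induction all the way down to $l=0$ and closes the argument with the irreducibility of $\Theta_0(\sigma)$ (Proposition \ref{irreducibility_temp_lift}) together with Proposition \ref{prop:lifts_temp_Mp}(i); this is cleaner and avoids the separate length-two analysis (via Lemma \ref{length_two}) your $l=2$ case would require.
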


\begin{proof}
Let $\tau'$ be a subquotient of $\Theta_{-l}(\sigma)$ such that
\[
\nu^\frac{l-1}{2}\times \dotsm \times \nu^\frac{1}{2} \rtimes \tau \twoheadrightarrow \tau'
\]
for some tempered $\tau$. Denote by $\tau_1$ the Langlands quotient of $\nu^\frac{l-3}{2}\times \dotsm \times \nu^\frac{1}{2} \rtimes \tau$, so that $\nu^\frac{l-1}{2} \rtimes \tau_1 \twoheadrightarrow \tau'$, i.e.~$\tau' \hookrightarrow \nu^\frac{1-l}{2} \rtimes \tau_1$.

We now use Kudla's filtration:
the map we have just obtained shows that $\Hom(\tau', \nu^\frac{1-l}{2} \rtimes \tau_1) \neq 0$. Using Frobenius reciprocity, this means that $\Hom(R_{Q_1}(\tau'), \nu^\frac{1-l}{2} \otimes \tau_1) \neq 0$, where $Q_1$ denotes the appropriate standard maximal parabolic subgroup of $O(V)$. From here, we deduce that $ \nu^\frac{1-l}{2} \otimes \tau_1$ is a quotient of $R_{Q_1}(\tau')_{ \nu^\frac{1-l}{2}}$, which implies that it is also a subquotient of $R_{Q_1}(\Theta_{-l}(\sigma))_{ \nu^\frac{1-l}{2}}$. On the other hand, $\sigma \otimes R_{Q_1}(\Theta_{-l}(\sigma))_{ \nu^\frac{1-l}{2}}$ is obviously a quotient of $R_{Q_1}(\omega_{m_0,2n_0})$---here $2n_0$ is defined by $\sigma \in \text{Irr}(\widetilde{\text{Sp}(W_{2n_0})})$, $m_0 = 2n_0 + 1 + l$, and $\omega_{m_0,2n_0}$ is the corresponding Weil representation. Kudla's filtration of $R_{Q_1}(\omega_{m_0,2n_0})$ is
\begin{align*}
J^0 &=  \nu^\frac{1-l}{2} \otimes \omega_{m_0-2,2n_0}\quad (\text{the quotient})\\
J^1 &= \text{Ind}(\Sigma_1 \otimes \omega_{m_0-2,2n_0-2}) \quad (\text{the subrepresentation}).
\end{align*}
It is now easy to show that $J^1$ cannot participate in the epimorphism $R_{Q_1}(\omega_{m_0,n_0})\twoheadrightarrow \sigma \otimes R_{P'_1}(\Theta_{-l}(\sigma))_{ \nu^\frac{1-l}{2}}$. Otherwise, an application of the second Frobenius reciprocity would show that $R_{\overline{P}_1}(\sigma)$ (where $\overline{P}_1$ denotes the parabolic subgroup opposite to $P_1$) has a quotient of the form $ \nu^\frac{l-1}{2} \otimes \pi_1$. As $\sigma$ is tempered and $\frac{l-1}{2} > 0$, Casselman's criterion shows that this is impossible.

This means that $\sigma \otimes R_{Q_1}(\Theta_{-l}(\sigma))_{ \nu^\frac{1-l}{2}}$ is a quotient of $J^0$, which immediately implies that $\tau_1$ is a subquotient of $\Theta_{2-l}(\sigma)$.

Inductively repeating this argument shows that $\tau$ is a subquotient of $\Theta_{0}(\sigma)$; however, $\Theta_{0}(\sigma)$ is irreducible, so we must have $\tau = \Theta_{0}(\sigma) = \theta_{0}(\sigma)$. This proves that $\tau'$ is the Langlands quotient of
\[
 \nu^\frac{l-1}{2} \times \dotsm \times  \nu^\frac{1}{2}  \rtimes \theta_{0}(\sigma).
\]
By Proposition \ref{prop:lifts_temp_Mp} (i), we conclude that $\tau' = \theta_{-l}(\sigma)$.
\end{proof}
\noindent This proves
$
\theta_{-l}(\pi) = L(\delta_1\nu^{s_1}, \dotsc, \delta_k\nu^{s_k}, \nu^\frac{l-1}{2}, \dotsc,  \nu^\frac{1}{2}; \theta_{0}(\sigma))
$, thereby completing Case 1.

\bigskip 

\noindent\underline{Case 2: going-up tower when $l(\sigma) = 0$}\\
From $l(\sigma) = 0$ we easily get $l(\pi_{00}) = 0$. In other words, $\pi_{00}$ first appears on the going-up tower when $l=-2$. Furthermore, Proposition \ref{prop:lifts_temp_Mp} shows that $\theta_{-2}(\pi_{00})$ is tempered, whereas  for $l > 2$ the representation $\theta_{-l}(\pi_{00})$ is the Langlands quotient of
\[
\nu^\frac{l-1}{2} \times \dotsm \times \nu^\frac{3}{2} \rtimes \theta_{-2}(\pi_{00}),
\]
that is, the unique quotient of
\[
\zeta(\frac{3}{2},\frac{l-1}{2}) \rtimes \theta_{-2}(\pi_{00}).
\]
Using this in \eqref{eq:epi2} we get
\[
\delta_1\nu^{s_1} \times \dotsm \times \delta_k\nu^{s_k} \times \Delta \times \zeta(\frac{3}{2},\frac{l-1}{2}) \rtimes \theta_{-2}(\pi_{00}) \twoheadrightarrow \pi'.
\]
We proceed like in Case 1: $\zeta(\frac{3}{2},\frac{l-1}{2})$ can (using Lemma \ref{lem:zelevinsky}) switch places with all the $\delta_i'$ appearing in $\Delta$. The only possible exception is $\text{St}_2$ (see Lemma \ref{lem:zelevinsky} (iii)). However, $\text{St}_2$ cannot appear as it would imply $l(\sigma) = 2$. We thus have
\[
\delta_1\nu^{s_1} \times \dotsm \times \delta_k\nu^{s_k} \times \zeta(\frac{3}{2},\frac{l-1}{2}) \times \Delta \rtimes \theta_{-2}(\pi_{00}) \twoheadrightarrow \pi',
\]
so there is an irreducible (and tempered) subquotient $\tau$ of $\Delta \rtimes \theta_{-2}(\pi_{00})$ such that
\[
\delta_1\nu^{s_1} \times \dotsm \times \delta_k\nu^{s_k} \times \zeta(\frac{3}{2},\frac{l-1}{2}) \rtimes \tau \twoheadrightarrow \pi'.
\]
Repeating the arguments of Lemmas \ref{lem:unique_quotient}, \ref{lemma:really_subq} and \ref{lem:really_small_theta}, we can now show that
\[
\delta_1\nu^{s_1} \times \dotsm \times \delta_k\nu^{s_k} \rtimes \theta_{-l}(\sigma)\twoheadrightarrow \pi'
\]
and 
\[\theta_{-l}(\pi) = 
L(\delta_1\nu^{s_1}, \dotsc, \delta_k\nu^{s_k}, \nu^\frac{l-1}{2}, \dotsc, \nu^\frac{3}{2}; \theta_{-2}(\sigma)).
\]
Note that condition (i) of Lemma \ref{lem:tricky_condition} is not trivially satisfied like in Case 1. In the first step, it is crucial that $\nu^\frac{1}{2}$ does not appear among the $\delta_i\nu^{s_i}$. This, however, is a part of our assumptions stated at the beginning of section \ref{subs:std_mod_lifts}.
\bigskip

\noindent\underline{Case 3: going-up tower when $l(\sigma) = 2$, $m_\phi(S_2)$ is odd}\\
The fact that $S_2$ is contained in $\phi$ with odd multiplicity implies that it is also contained in the parameter of $\pi_{00}$. We thus have $l(\pi_{00})=2$ so that $\pi_{00}$ first appears on the going-up tower when $l=-4$. The lift $\theta_{-4}(\pi_{00})$ is tempered, and for $l > 4$ we have
\[
\nu^\frac{l-1}{2} \times \dotsm \times \nu^\frac{5}{2} \rtimes
\theta_{-4}(\pi_{00}) \twoheadrightarrow \theta_{-l}(\pi_{00}).
\]
Combining this with \eqref{eq:epi2} we get
\[
\Pi \times\Delta \times \zeta(\frac{5}{2},\frac{l-1}{2}) \rtimes \theta_{-4}(\pi_{00}) \twoheadrightarrow \pi'
\]
where $\Pi = \delta_1\nu^{s_1} \times \dotsm \times \delta_k\nu^{s_k}$.

For simplicity we now assume that $\Delta$ does not contain $\text{St}_4$. When $\text{St}_4$ appears, the proof requires minor technical modifications, which we leave to the reader (see Case 4 for a discussion which handles a similar exception). The benefit of leaving out $\text{St}_4$ is that $\zeta(\frac{5}{2},\frac{l-1}{2})$ can switch places with all the representations appearing in $\Delta$ (see condition (i) of Lemma \ref{lem:tricky_condition}), so we get
\[
\Pi \times \zeta(\frac{5}{2},\frac{l-1}{2}) \times \Delta \rtimes \theta_{-4}(\pi_{00}) \twoheadrightarrow \pi'.
\]
Therefore, there is an irreducible (and tempered) subquotient $\tau$ of $\Delta \rtimes \theta_{-4}(\pi_{00})$ such that
\[
\Pi \times \zeta(\frac{5}{2},\frac{l-1}{2}) \rtimes \tau \twoheadrightarrow \pi'.
\]
Repeating the arguments of Lemmas \ref{lem:unique_quotient} and \ref{lemma:really_subq} we can now show that this implies
\[
\Pi \rtimes \theta_{-l}(\sigma) \twoheadrightarrow \pi'.
\]
Note that a slight modification of these arguments is necessary when a representation of the form $\delta[\nu^a,\nu^\frac{3}{2}]$ appears in $\Pi$ (see Lemma 7.11 and the discussion preceding it in \cite{bakic_generic}).

It remains to determine the standard module of $\pi' = \theta_{-l}(\pi)$. If $\Pi$ does not contain a representation of the form $\delta[\nu^a,\nu^\frac{3}{2}]$, this is straightforward, using the arguments of Lemma \ref{lem:unique_quotient}. However, if $\Pi$ contains any of these representations, then we cannot use Lemma \ref{lem:tricky_condition} because condition (i) is not satisfied. We briefly sketch the steps used to bypass this problem. 

The only exceptions of the form $\delta[\nu^a,\nu^\frac{3}{2}]$ are $\delta([\nu^{-\frac{1}{2}},\nu^\frac{3}{2}])$ and $\nu^\frac{3}{2}$; note that $\delta([\nu^{\frac{1}{2}},\nu^\frac{3}{2}])$ cannot appear by Proposition \ref{l(pi)}. We may group all the occurrences of the exceptional representations from $\Pi$ into a representation which we denote $S$, so that $\Pi = \Pi' \times S$ (recall that $\delta([\nu^{-\frac{1}{2}},\nu^\frac{3}{2}])$ can appear at most once if the standard module of $\pi$ is irreducible). The fact that the standard module of $\pi$ is irreducible allows us to write $\gamma_\psi^{-1}\Pi' \times \gamma_\psi^{-1}S^\vee \twoheadrightarrow \pi$ instead of $\Pi' \times \gamma_\psi^{-1}S \twoheadrightarrow \pi$. Repeating the steps from the beginning of this section, we now arrive at
\[
\Pi' \times S^\vee \times \Delta \rtimes \Theta_{-l}(\pi_{00}) \twoheadrightarrow \pi'.
\]
We now modify Lemma \ref{lem:main} (see Lemma 7.12 in \cite{bakic_generic}) to show that we may switch from $\Theta_{-l}(\pi_{00})$ to $\theta_{-l}(\pi_{00})$:
\[
\Pi' \times S^\vee \times \Delta \rtimes \theta_{-l}(\pi_{00})  \twoheadrightarrow \sigma.
\]
Recall that $\zeta(\frac{5}{2},\frac{l-1}{2}) \rtimes \theta_{-4}(\pi_{00}) \twoheadrightarrow \theta_{-l}(\pi_{00})$. Furthermore, $\zeta(\frac{5}{2},\frac{l-1}{2})$ can now switch places with all the representations from $S^\vee$ and $\Delta$. Note that there are no problems in $\Delta$ since we have assumed that $\text{St}_4$ does not appear; likewise, $S^\vee$ has no problematic segments, as opposed to $S$. We may thus write
\[
\Pi' \times \zeta(\frac{5}{2},\frac{l-1}{2}) \times S^\vee \times \Delta \rtimes \theta_{-4}(\pi_{00}) \twoheadrightarrow \pi'.
\]
Finally, this shows that there is an irreducible subquotient $B$ of $S^\vee \times \Delta \rtimes \theta_{-4}(\pi_{00})$ such that
\[
\Pi' \times \zeta(\frac{5}{2},\frac{l-1}{2}) \rtimes B \twoheadrightarrow \pi'.
\]
Comparing this map with $\Pi \rtimes \theta_{-l}(\sigma) \twoheadrightarrow \pi'$ (the existence of which we have already proven) we can prove that the standard module of $B$ is given by $S \rtimes \theta_{-4}(\sigma)$. This yields
\[
\Pi' \times \zeta(\frac{5}{2},\frac{l-1}{2}) \times S \times \theta_{-4}(\sigma).
\]
Starting from this map, we may apply the process described in Lemma \ref{lem:unique_quotient} (in particular, there are no more exceptions to Lemma \ref{lem:tricky_condition}), and in this way complete the proof.

A minor detail requires additional attention: if $\Pi'$ contains $\nu^\frac{1}{2}$, then the procedure of Lemma \ref{lem:unique_quotient} will not result in a decreasing order. Namely, $\nu^\frac{1}{2}$ will still need to switch places with the representations of $S$. Note that $\delta([\nu^{-\frac{1}{2}},\nu^\frac{3}{2}])$ does not cause a problem (see Lemma \ref{lem:zelevinsky} (ii)). It is only $\nu^\frac{3}{2}$ that we cannot trivially move, since $\nu^\frac{1}{2} \times \nu^\frac{3}{2}$ reduces. However, this reducibility shows that $\nu^\frac{1}{2}$ and $\nu^\frac{3}{2}$ cannot both appear in the standard module if it is irreducible.

\bigskip

\noindent\underline{Case 4: going-up tower when $l(\sigma) = 2$, $m_\phi(S_2) = 2h > 0$}\\
Since the multiplicity of $S_2$ in $\phi$ is even, we see that $S_2$ does not appear in the parameter of $\pi_{00}$. Thus $l(\pi_{00})=0$ and $\pi_{00}$ first appears on the going-up tower when $l=-2$. In this case, $\theta_{-2}(\pi_{00})$ is tempered (in fact, it is a discrete series representation) and we have
\[
\nu^\frac{l-1}{2} \times \dotsm \times \nu^\frac{3}{2} \rtimes \theta_{-2}(\pi_{00}) \twoheadrightarrow \theta_{-l}(\pi_{00}).
\]
Taking this into account in \eqref{eq:epi2}, we get
\[
\Pi \times \Delta \times \zeta(\frac{3}{2},\frac{l-1}{2}) \rtimes \theta_{-2}(\pi_{00}) \twoheadrightarrow \pi',
\]
where $\Pi = \delta_1\nu^{s_1} \times \dotsm \times \delta_k\nu^{s_k}$. We have already seen that $\zeta(\frac{3}{2},\frac{l-1}{2})$ can switch places with all the representations in $\Delta$, except $\text{St}_2$. Therefore, we write $\Delta = (\text{St}_2,h) \times \Delta'$ where $\Delta'$ is the product of all the $\delta_i'$ different from $\text{St}_2$. Lemma \ref{lem:zelevinsky} (iii) now shows that we have two possibilities:
\begin{enumerate}[(i)]
\item $\Pi \times \zeta(\frac{3}{2},\frac{l-1}{2}) \times \Delta \rtimes \theta_{-2}(\pi_{00}) \twoheadrightarrow \pi'$;

\item $\Pi \times L \times (\text{St}_2,h-1) \times \Delta' \rtimes \theta_{-2}(\pi_{00}) \twoheadrightarrow \pi'$, where $L = L(\nu^\frac{l-1}{2}, \dotsc, \nu^\frac{5}{2}, \text{St}_3\nu^\frac{1}{2})$.
\end{enumerate}

Let us show that (i) is not possible. If (i) were true, there would be an irreducible (and tempered) subquotient $\tau_1$ of $\Delta \rtimes \theta_{-2}(\pi_{00})$ such that
\[
\Pi \times \zeta(\frac{3}{2},\frac{l-1}{2}) \rtimes \tau_1 \twoheadrightarrow \pi'.
\]
We can now argue as in Case 1, in particular, like in Lemmas \ref{lem:unique_quotient} and \ref{lemma:really_subq}: the above map implies that $\Theta_{-l}(\sigma)$ has a subquotient whose standard module is of the form
\[
\nu^\frac{l-1}{2} \times \dotsm \times \nu^\frac{3}{2} \rtimes \tau_1.
\]
In analogy with Case 3, these arguments need to be slightly modified if $\nu^\frac{1}{2}$ appears in $\Pi$; we omit the details here.

Finally, we use the arguments of Lemma \ref{lem:really_small_theta} to show that a subquotient of the above form cannot appear: an inductive procedure shows that it would imply $\theta_{-2}(\sigma) \neq 0$, but this contradicts the assumptions of Case 4. We have thus shown that (i) is not possible.

Similar reasoning now leads to the desired conclusion if (ii) is true. Assuming (ii), we first note that there is an irreducible (and tempered) subquotient $\tau_2$ of $(\text{St}_2,h-1) \times \Delta' \rtimes \theta_{-2}(\pi_{00})$ such that
\[
\label{eq:adhoc}\tag{\textasteriskcentered}
\Pi \times L \rtimes \tau_2 \twoheadrightarrow \pi'.
\]
Let us show that the left-hand side of the above map has a unique irreducible quotient. This is done like in Lemma \ref{lem:unique_quotient}. The only exceptions which require special treatment are those in which some of the representations $\delta_1\nu^{s_1}, \dotsc, \delta_k\nu^{s_k}$ are defined by segments ending in $\nu^\frac{3}{2}$ (thus contradictiong condition (i) of Lemma \ref{lem:tricky_condition}). 

On the other hand, since the tempered support of $\sigma$ contains $\text{St}_2 = \delta[\nu^{-\frac{1}{2}},\nu^{\frac{1}{2}}]$, we see that $\delta_1\nu^{s_1}, \dotsc, \delta_k\nu^{s_k}$ cannot be equal to $\nu^\frac{3}{2}$. The segment $[\nu^\frac{3}{2}]$ is linked to $[\nu^{-\frac{1}{2}}, \nu^\frac{1}{2}]$, and would thus cause the standard module of $\pi$ to reduce.

The only remaining exception which could prevent us from using Lemma \ref{lem:tricky_condition} is $\text{St}_3\nu^\frac{1}{2}$. However, it is easy to show that $L$ and $\text{St}_3\nu^\frac{1}{2}$ can in fact switch places. This means that the procedure of Lemma \ref{lem:unique_quotient} can be applied even in this case. Thus, we may deduce that the left-hand side of (ii) has a unique irreducible quotient.

In particular, this shows that the subquotient of $L \rtimes \tau_2$ (call it $\tau_3$) which participates in \eqref{eq:adhoc} is equal to its unique irreducible quotient. Arguing as in Lemma \ref{lemma:really_subq} we can also show that $\tau_3$ is at the same time a subquotient of $\Theta_{-l}(\sigma)$. We are thus looking for a subquotient of $\Theta_{-l}(\sigma)$ whose standard module is of the form
\[
\nu^\frac{l-1}{2} \times \dotsm \times \nu^\frac{5}{2} \times \text{St}_3\nu^\frac{1}{2} \rtimes \tau_2.
\]
The arguments of Lemma \ref{lem:really_small_theta} now show that $L(\text{St}_3\nu^\frac{1}{2} \rtimes \tau_2)$ should be a subquotient of $\Theta_{-4}(\sigma)$. It is easy to show that the only such subquotient of $\Theta_{-4}(\sigma)$ is in fact $\theta_{-4}(\sigma)$.

We have thus shown the following:
\[
\nu^\frac{l-1}{2} \times \dotsm \times \nu^\frac{5}{2} \rtimes \theta_{-4}(\sigma) \twoheadrightarrow \tau_3.
\]
By Proposition \ref{prop:lifts_temp_Mp}, this means that $\tau_3$ is equal to $\theta_{-l}(\sigma)$. This completes the fourth and final case of our proof.

\section{Theta lifts of generic representations II}
\label{sec:lifts2}

We now turn to the rest of generic representations: we examine the lifts of a $\psi$-generic $\pi=L(\gamma_{\psi}^{-1}\delta_1\nu^{s_1}, \gamma_{\psi}^{-1}\delta_2\nu^{s_2},\ldots , \gamma_{\psi}^{-1}\delta_k\nu^{s_k};\sigma)$ such that $l(\sigma)=0$ and such that there exists, among $\delta_1\nu^{s_1},\ldots,\delta_k\nu^{s_k},$  a representation of the form $\delta([\nu^{1/2},\nu^{m+1/2}])$ for some $m\in \Z_{\ge 0}$. In accordance with Proposition \ref{l(pi)}, the case in which $\nu^{1/2}$ appears among $\delta_1\nu^{s_1},\ldots,\delta_k\nu^{s_k}$ is treated separately.

\begin{rem} When $l(\sigma)=0$ and $\nu^{1/2}$ appears among $\delta_1\nu^{s_1},\ldots,\delta_k\nu^{s_k}$ we can indeed have $l(\sigma)=0$ and $l(\pi)>0:$ for example, $l(\mu_0)=0$ and $l(\omega_{\psi}^+)=L(\gamma_{\psi}^{-1}\nu^{1/2};\mu_0)=2.$ Here $\mu_0$ denotes the unique non-trivial representation of $\widetilde{Sp(0,F)}=\{1,-1\}$ and $\omega_{\psi}^+$ denotes the even Weil representation of $\widetilde{SL(2,F)}$ attached to a character $\psi.$ Actually, we always have this kind of situation.
\end{rem} 

\begin{prop}
\label{prop:l(pi),l(sigma)=0}
  Let $\pi=L(\gamma_{\psi}^{-1}\delta_1\nu^{s_1},\dotsc,\gamma_{\psi}^{-1}\delta_{k-1}\nu^{s_{k-1}},\gamma_{\psi}^{-1}\nu^{1/2};\sigma)$ be a $\psi$--generic representation $\pi$  of $\widetilde{Sp(2n,F)}$ with $l(\sigma)=0.$ Then, $l(\pi)=2.$
\end{prop}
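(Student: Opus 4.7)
The plan is to prove $l(\pi) = 2$ by bounding $l(\pi)$ from above and below. The governing observation is that $\nu^{1/2}$ coincides with the exceptional form $St_1\nu^{(l-1)/2}$ of Lemma \ref{cor5.3} at level $l=2$: a direct application of that lemma fails at $l=2$, but the exceptional-case behavior of Kudla's filtration supplies exactly the lower bound we need. The background fact is Proposition \ref{prop_reducibility_1_2}, which (since $l(\sigma) = 0$) guarantees that $\gamma_\psi^{-1}\nu^{1/2}\rtimes\sigma$ is reducible. I first set $\pi' := L(\gamma_\psi^{-1}\delta_1\nu^{s_1},\ldots,\gamma_\psi^{-1}\delta_{k-1}\nu^{s_{k-1}};\sigma)$, a $\psi$-generic representation of $\widetilde{Sp(2n-2,F)}$ obtained by omitting the exceptional $\gamma_\psi^{-1}\nu^{1/2}$. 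By Theorem \ref{generic_reps_metaplectic} applied to $\pi$, none of $\delta_1\nu^{s_1},\ldots,\delta_{k-1}\nu^{s_{k-1}}$ is of the form $\delta([\nu^{1/2},\nu^{m+1/2}])$, so Proposition \ref{l(pi)} case~(1) yields $l(\pi') = l(\sigma) = 0$; in particular, $\theta_0(\pi') \neq 0$ on $O(V_{2n-1})$, on the same pair of towers as $\sigma$ and $\pi$ (the going-up/going-down tower correspondence for $\psi$-generic representations extracted in the proof of Proposition \ref{l(pi)}).

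For the upper bound $l(\pi) \leq 2$, I apply Lemma \ref{cor5.3} at $l = 4$. The factor $\nu^{1/2} \ncong St_1\nu^{3/2}$ is non-exceptional at that level, and $\Theta_4(\sigma) = 0$ on both towers (since $l(\sigma) = 0$). Successive application of the lemma, with minor re-orderings to circumvent any remaining exceptional shapes (such as $\nu^{3/2}$ or $St_3\nu^{1/2}$) that might appear among the other $\delta_i\nu^{s_i}$---exactly the trick used in the proof of Proposition \ref{l(pi)}, interchanging $\nu^{3/2}$ with $\nu^{-3/2}$, etc.---yields $\Theta_4(\pi) = 0$, hence $l(\pi) \leq 2$.

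For the lower bound $l(\pi) \geq 2$, induction in stages identifies $\pi$ as a quotient of $\gamma_\psi^{-1}\nu^{1/2}\rtimes\pi'$. Analyzing Kudla's filtration of $R_{\tilde{P}_1}(\omega_{2n-1,2n,\psi})$---the Jacquet module along the maximal parabolic of $\widetilde{Sp(2n,F)}$ with Levi $\widetilde{GL_1}\times\widetilde{Sp(2n-2,F)}$, as in the proof of Lemma 3.3 of \cite{Hanzer_R_metaplectic} underlying Lemma \ref{cor5.3}---the exceptional form $\nu^{1/2}$ at $l = 2$ contributes an ``extra'' piece realizing $\Theta_0(\pi')$ as a quotient of $\Theta_2(\pi)$, the usual piece $\nu^{1/2}\rtimes\Theta_2(\sigma)$ being zero because $\Theta_2(\sigma) = 0$. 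Since $\Theta_0(\pi') \neq 0$, I conclude $\Theta_2(\pi) \neq 0$, hence $l(\pi) \geq 2$. The hardest step is the careful unwinding of the Kudla filtration here: specifically, verifying that the morphism into $\pi$ produced via Frobenius reciprocity does not factor through the kernel of the surjection $\gamma_\psi^{-1}\nu^{1/2}\rtimes\pi' \twoheadrightarrow \pi$. This should follow from the fact that $\pi$ is the unique $\psi$-generic subquotient of $\gamma_\psi^{-1}\nu^{1/2}\rtimes\pi'$ (by heredity and the uniqueness of Whittaker functionals, Theorem \ref{thm_uniqueness_whittaker}), which isolates the correct summand.
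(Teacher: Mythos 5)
Your upper bound $l(\pi)\le 2$ runs exactly as in the paper: apply Lemma \ref{cor5.3} at $l=4$ on the going-down tower, observe that the only potentially exceptional factors $\nu^{3/2}$, $St_2\nu^1$, $St_3\nu^{1/2}$ can be sidestepped (by flipping signs for the first and third, and ruling out $St_2\nu^1$ since $\nu^{1/2}$ already occurs so no other $\delta([\nu^{1/2},\nu^{m+1/2}])$ may appear by Theorem \ref{generic_reps_metaplectic}), and conclude $\Theta_4(\pi)=0$ from $\Theta_4(\sigma)=0$.

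Your lower bound, however, takes a genuinely different route, and the route has a gap at the step you yourself flag as the hardest. You want to use the exceptional piece $J^0$ of Kudla's filtration of $R_{\tilde P_1}(\omega_{2n-1,2n})$, together with $\Theta_0(\pi')\neq 0$, to produce a nonzero map $\omega\to(\gamma_\psi^{-1}\nu^{1/2}\rtimes\pi')\boxtimes\tau$, and then conclude $\Theta_2(\pi)\neq 0$ by composing with $\gamma_\psi^{-1}\nu^{1/2}\rtimes\pi'\twoheadrightarrow\pi$. The justification you offer for the last step---that $\pi$ is the unique $\psi$-generic subquotient of $\gamma_\psi^{-1}\nu^{1/2}\rtimes\pi'$---does not deliver the conclusion. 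Uniqueness of the generic subquotient would help if the image of the map out of $\omega$ were known to be generic, but there is no reason for that: the Weil representation restricted to $\widetilde{Sp(2n)}$ has no Whittaker model, and the image of your map is merely some $\widetilde{Sp(2n)}$-subrepresentation of $(\gamma_\psi^{-1}\nu^{1/2}\rtimes\pi')\boxtimes\tau$. Although $\pi$ is the unique irreducible \emph{quotient} of $\gamma_\psi^{-1}\nu^{1/2}\rtimes\pi'$ (it is a quotient of the standard module), the image of your map is a \emph{subrepresentation}, not a quotient, of $(\gamma_\psi^{-1}\nu^{1/2}\rtimes\pi')\boxtimes\tau$; its irreducible quotients are therefore only constrained to be subquotients of $\gamma_\psi^{-1}\nu^{1/2}\rtimes\pi'$, and nothing you say prevents the image from being swallowed by the kernel of the Langlands surjection. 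This is precisely the difficulty that makes the exceptional case of Lemma \ref{cor5.3} exceptional, and unwinding it properly would require an argument comparable in length and delicacy to the entire proposition. The paper avoids the problem altogether: it assumes $l(\pi)=0$, passes to the $l=-2$ lift on the non-split tower, uses Lemma \ref{irreducibility_1_2} to replace $\nu^{1/2}$ with $\nu^{-1/2}$, lifts back to the metaplectic tower via Lemma \ref{cor5.3} and Prop.~5.4 of \cite{Atobe_Gan} to obtain $\gamma_\psi^{-1}\delta_1\nu^{s_1}\times\cdots\times\gamma_\psi^{-1}\nu^{-1/2}\rtimes\sigma\twoheadrightarrow\pi$, and then contradicts the reducibility of $\gamma_\psi^{-1}\nu^{1/2}\rtimes\sigma$ guaranteed by Proposition \ref{prop_reducibility_1_2}. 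You should adopt that argument, or else close the gap by producing a concrete reason why the composed map cannot vanish.
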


\begin{proof}
From the proof of the Proposition \ref{l(pi)} we know that $l(\pi)\ge l(\sigma).$ We now prove that $l(\pi)\le 2.$ Indeed, assume that $l(\pi)\ge 4.$ Then, on the split-tower (which is, we remind, the going-down tower for $\sigma$), by a consecutive use of Lemma \ref{cor5.3}, we have
\[\delta_1\nu^{s_1}\times \delta_2\nu^{s_2}\times \cdots\times \delta_{k-1}\nu^{s_{k-1}}\times \nu^{1/2}\rtimes \Theta_{4}(\sigma)\twoheadrightarrow \Theta_{4}(\pi).\]
Indeed, there are no exceptions---the possible exceptions would be $\nu^{3/2},\;\nu^1St_{2}$ and $\nu^{1/2}St_{3},$ but similarly as in the proof of Proposition \ref{l(pi)}, we can avoid $\nu^{3/2}$ and $\nu^{1/2}St_{3}$. On the other hand, $\nu^1St_{2}$ cannot appear among $\delta_1\nu^{s_1},\dotsc,\allowbreak\delta_{k-1}\nu^{s_{k-1}}$ since we have $\delta_k\nu^{s_k}=\nu^{1/2}$ (cf.~Theorem \ref{generic_reps_metaplectic}). Now $\Theta_{4}(\sigma)=0,$ so necessarily $\Theta_{4}(\pi)=0.$ This proves $l(\pi)\le 2.$  We now prove that $l(\pi)\neq 0.$
Assume the contrary, i.e., $l(\pi)=0.$ Then, the first  non-zero lift on the non-split tower for $\pi$ is for $l=-2.$  Again, by a consecutive use of Lemma \ref{cor5.3}, on the non-split tower, we have
\begin{equation}
\label{l=-2}
\delta_1\nu^{s_1}\times \delta_2\nu^{s_2}\times \cdots\times \delta_{k-1}\nu^{s_{k-1}}\times \nu^{1/2}\rtimes \Theta_{-2}(\sigma)\twoheadrightarrow \Theta_{-2}(\pi).
\end{equation}
We need the following
\begin{lem}
\label{irreducibility_1_2}
 We keep the assumptions of this Proposition. Then, the representation $\nu^{1/2}\rtimes \Theta_{-2}(\sigma)$ is irreducible. Here $\Theta_{-2}(\sigma)$ is the full lift on the non-split tower.
\end{lem}
\begin{proof}(of Lemma) Note that $\Theta_{-2}(\sigma)$ is irreducible and tempered (see the remark following Proposition \ref{first_lifts}). By the description of the L-parameter $\phi_{\Theta_{-2}(\sigma)}$ of $\Theta_{-2}(\sigma)=\theta_{-2}(\sigma)$ (given in Theorem 4.5 (1) of \cite{Atobe_Gan}; note that $\chi_{V}=1$), we have that $\phi_{\Theta_{-2}(\sigma)}=\phi_{\sigma}\oplus S_2.$ Also, by the same Theorem and Desideratum 3.1 (6)  of \cite{Atobe_Gan}, we have that the character of the component group of the centralizer of  $\phi_{\Theta_{-2}(\sigma)},$ which, by Langlands correspondence, corresponds to $\Theta_{-2}(\sigma),$ attains value $-1$ on $S_2.$ Now assume that $\nu^{1/2}\rtimes \Theta_{-2}(\sigma)$ reduces. By the same arguments as in Lemma \ref{length_two}, there exists an irreducible tempered representation $T$ such that $T\hookrightarrow \nu^{1/2}\rtimes \Theta_{-2}(\sigma).$ Since $S_2$ appears with multiplicity two in the L-parameter of $T,$ there exists an irreducible tempered representation $\sigma_3$ such that $T\hookrightarrow \delta([\nu^{-1/2},\nu^{1/2})]\rtimes \sigma_3.$ This means that  $\delta([\nu^{-1/2},\nu^{1/2})]\otimes \sigma_3$ appears as a subquotient of the appropriate Jacquet module of $T,$ and, accordingly, in the Jacquet module of $\nu^{1/2}\rtimes \Theta_{-2}(\sigma).$ Using Tadi\'c's formula for Jacquet modules of the induced representations, we get that $\nu^{1/2}\otimes \sigma_4$ (for some irreducible $\sigma_4$) appears in the Jacquet module of $\Theta_{-2}(\sigma),$ so that $\Theta_{-2}(\sigma) \hookrightarrow \nu^{1/2}\rtimes \sigma_5,$ for some irreducible representation $\sigma_5.$ We easily get that $\sigma_5$ is necessarily tempered.  Let $\sigma_{00}$ and $\sigma_0$ be the discrete series representations (of the appropriate full odd orthogonal groups) which are the classical parts of the tempered support of $\Theta_{-2}(\sigma)$ and $\sigma_5,$ respectively. We easily get that $\sigma_{00} \hookrightarrow \nu^{1/2}\rtimes \sigma_0.$ This means that the character (of the appropriate component group attached to the L-parameter of $\sigma_{00}$) corresponding to  $\sigma_{00}$
attains value 1 on $S_2$ (cf.~\cite{Moe}, section 5.1., equation (2)). Since this character of  $\sigma_{00}$ is just the restriction of the character corresponding to $\Theta_{-2}(\sigma),$ we get a contradiction.
\end{proof}
Note that, since  $\Theta_{-2}(\sigma)$ is irreducible and tempered, $\theta_{-2}(\pi)$ is precisely the Langlands quotient of the left-hand side of \eqref{l=-2}. The irreducibility of $\nu^{1/2}\rtimes \Theta_{-2}(\sigma)$ now gives that we also have the following epimorphism
\[\delta_1\nu^{s_1}\times \delta_2\nu^{s_2}\times \cdots\times \delta_{k-1}\nu^{s_{k-1}}\times \nu^{-1/2}\rtimes \Theta_{-2}(\sigma)\twoheadrightarrow \theta_{-2}(\pi).\]
Now we go back to the metaplectic tower for $l=2$,  and again, by applying Lemma \ref{cor5.3} without exceptional cases, together with Proposition 5.4. of \cite{Atobe_Gan},  we get
\begin{equation}
\label{eq_2}
\gamma_{\psi}^{-1}\delta_1\nu^{s_1}\times \gamma_{\psi}^{-1}\delta_2\nu^{s_2}\times \cdots\times \gamma_{\psi}^{-1}\delta_{k-1}\nu^{s_{k-1}}\times \gamma_{\psi}^{-1}\nu^{-1/2}\rtimes \sigma\twoheadrightarrow \pi.
\end{equation}
By Theorem \ref{generic_reps_metaplectic}, the representation on the left-hand side of \eqref{eq_2} is isomorphic to
\[\gamma_{\psi}^{-1}\widetilde{\delta_1}\nu^{-s_1}\times \gamma_{\psi}^{-1}\widetilde{\delta_2}\nu^{-s_2}\times \cdots\times \gamma_{\psi}^{-1}\widetilde{\delta_{k-1}}\nu^{-s_{k-1}}\times \gamma_{\psi}^{-1}\nu^{-1/2}\rtimes \sigma.\]
This means that the standard representation for $\pi$ is irreducible.
This is not true, since by Proposition \ref{prop_reducibility_1_2}, $\gamma_{\psi}^{-1}\nu^{-1/2}\rtimes \sigma$ reduces.   Thus, $\theta_{-2}(\pi)=0,$ so $\pi$ appears at the earliest for $l=-4$ on the non-split tower, i.e., $l(\pi)\ge 2.$
\end{proof}

\subsection{The first lifts of a $\psi$--generic representation\\ $L(\gamma_{\psi}^{-1}\delta_1\nu^{s_1},\gamma_{\psi}^{-1}\delta_2\nu^{s_2},\ldots,\gamma_{\psi}^{-1}\delta_{k-1}\nu^{s_{k-1}},\gamma_{\psi}^{-1}St_{2s_k}\nu^{s_{k}};\sigma)$  with $l(\sigma)=0$}
\label{sec:exceptions1} 

\begin{prop} 
\label{first_lifts}
Let $\pi=L(\gamma_{\psi}^{-1}\delta_1\nu^{s_1},\dotsc,\gamma_{\psi}^{-1}\delta_{k-1}\nu^{s_{k-1}},\gamma_{\psi}^{-1}\nu^{1/2};\sigma)$ be a $\psi$--generic representation with $l(\sigma)=0.$ Then, on the split tower we have 
\[\theta_{2}(\pi)=L(\delta_1\nu^{s_1},\delta_2\nu^{s_2},\ldots,\delta_{k-1}\nu^{s_{k-1}};\Theta_{0}(\sigma))\]
and
\[\theta_{0}(\pi)=L(\delta_1\nu^{s_1},\delta_2\nu^{s_2},\ldots,\delta_{k-1}\nu^{s_{k-1}},\nu^{1/2};\Theta_{0}(\sigma)).\]
On the non-split tower we get
\[\theta_{-4}(\pi)=L(\delta_1\nu^{s_1},\delta_2\nu^{s_2},\ldots,\delta_{k-1}\nu^{s_{k-1}},\delta([\nu^{1/2},\nu^{3/2}]);\Theta_{-2}(\sigma)).\]
Again, we assume that we have permuted $\{\delta_1\nu^{s_1}, \delta_2\nu^{s_2}, \dotsc, \delta_{k-1}\nu^{s_{k-1}}\}$ among $\delta([\nu^{1/2},\nu^{3/2}])$ if necessary, so that they appear in the descending order.
\end{prop}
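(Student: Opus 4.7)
The proof splits into the three cases corresponding to the three lifts stated. For $\theta_0(\pi)$ on the split tower and $\theta_{-4}(\pi)$ on the non-split tower, direct iteration of Lemma \ref{cor5.3} applies and the Langlands classification pins down the answer; for $\theta_2(\pi)$ we are precisely in the single exceptional case of Lemma \ref{cor5.3} (since $\nu^{1/2}=St_1\nu^{(2-1)/2}$), so we route around it by passing to the representation $\pi''$ obtained by deleting the $\gamma_{\psi}^{-1}\nu^{1/2}$ factor.

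Consider first $\theta_0(\pi)$. The exceptional hypothesis of Lemma \ref{cor5.3} (namely $\delta_i\nu^{s_i}\cong St_{t}\nu^{-t/2}$) never occurs since all $s_i>0$, so iterated application gives
\[
\delta_1\nu^{s_1}\times\dotsm\times\delta_{k-1}\nu^{s_{k-1}}\times\nu^{1/2}\rtimes\Theta_0(\sigma)\twoheadrightarrow\Theta_0(\pi)\twoheadrightarrow\theta_0(\pi).
\]
By $l(\sigma)=0$ and Proposition \ref{irreducibility_temp_lift}, $\Theta_0(\sigma)$ is irreducible and tempered; the irreducibility relations \eqref{GL_conditions} of Theorem \ref{generic_reps_metaplectic} allow us to permute the GL-factors into decreasing exponent order, producing a standard module whose Langlands quotient must equal $\theta_0(\pi)$. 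For $\theta_{-4}(\pi)$, an identical argument at $l=-4$ (still no exceptions, as the relevant $St_t\nu^{(l-t)/2}$ has negative exponent) yields
\[
\delta_1\nu^{s_1}\times\dotsm\times\delta_{k-1}\nu^{s_{k-1}}\times\nu^{1/2}\rtimes\Theta_{-4}(\sigma)\twoheadrightarrow\theta_{-4}(\pi),
\]
and since $\Theta_{-2}(\sigma)$ is the first (tempered) lift on the non-split tower, Proposition \ref{prop:lifts_temp_Mp}(ii) gives $\nu^{3/2}\rtimes\theta_{-2}(\sigma)\twoheadrightarrow\theta_{-4}(\sigma)$. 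Substituting this in and then collapsing $\nu^{1/2}\times\nu^{3/2}$ onto its unique irreducible quotient $\delta([\nu^{1/2},\nu^{3/2}])$ yields
\[
\delta_1\nu^{s_1}\times\dotsm\times\delta_{k-1}\nu^{s_{k-1}}\times\delta([\nu^{1/2},\nu^{3/2}])\rtimes\theta_{-2}(\sigma)\twoheadrightarrow\theta_{-4}(\pi);
\]
after reordering via \eqref{GL_conditions}, this is a standard module with the claimed Langlands quotient.

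For $\theta_2(\pi)$, set $\pi'':=L(\gamma_{\psi}^{-1}\delta_1\nu^{s_1},\dotsc,\gamma_{\psi}^{-1}\delta_{k-1}\nu^{s_{k-1}};\sigma)$. By the uniqueness statement in Theorem \ref{generic_reps_metaplectic}, none of the factors $\delta_i\nu^{s_i}$, $i<k$, is of the exceptional form $\delta([\nu^{1/2},\nu^{m+1/2}])$; hence $\pi''$ satisfies the standard module conjecture and, by Proposition \ref{l(pi)}, $l(\pi'')=l(\sigma)=0$. In particular $\Theta_2(\pi'')=0$, and Proposition \ref{prop:lifts} identifies $\theta_0(\pi'')=L(\delta_1\nu^{s_1},\dotsc,\delta_{k-1}\nu^{s_{k-1}};\Theta_0(\sigma))$. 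The Langlands classification gives $\gamma_{\psi}^{-1}\nu^{1/2}\rtimes\pi''\twoheadrightarrow\pi$, and Kudla's filtration (exactly as in the exceptional subcase of Case~2 of Section~\ref{subs:std_mod_lifts}) produces
\[
\nu^{1/2}\rtimes\Theta_2(\pi'')\twoheadrightarrow\Theta_2(\gamma_{\psi}^{-1}\nu^{1/2}\rtimes\pi'')\twoheadrightarrow\Theta_0(\pi'')\twoheadrightarrow 0.
\]
The leftmost term vanishes, so $\Theta_2(\gamma_{\psi}^{-1}\nu^{1/2}\rtimes\pi'')\cong\Theta_0(\pi'')$; composing with $\Theta_2(\gamma_{\psi}^{-1}\nu^{1/2}\rtimes\pi'')\twoheadrightarrow\Theta_2(\pi)$ gives $\Theta_0(\pi'')\twoheadrightarrow\Theta_2(\pi)$, and comparing unique irreducible quotients forces $\theta_2(\pi)=\theta_0(\pi'')$. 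The main obstacle is justifying the Kudla-filtration step in the present reducible setting ($\gamma_{\psi}^{-1}\nu^{1/2}\rtimes\pi''$ is reducible by Proposition \ref{prop_reducibility_1_2}, so the calculation from Case~2 does not transfer verbatim); however, the filtration depends only on the Jacquet module of the Weil representation and is therefore unaffected, with right-exactness of the theta functor handling the final passage to $\Theta_2(\pi)$.
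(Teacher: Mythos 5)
The computation of $\theta_0(\pi)$ is fine and matches the paper's (implicit) argument. The other two lifts have genuine gaps.

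For $\theta_{-4}(\pi)$, your argument simply ``substitutes'' $\nu^{3/2}\rtimes\theta_{-2}(\sigma)\twoheadrightarrow\theta_{-4}(\sigma)$ into the epimorphism
$\delta_1\nu^{s_1}\times\dotsm\times\nu^{1/2}\rtimes\Theta_{-4}(\sigma)\twoheadrightarrow\theta_{-4}(\pi)$. But by the Remark following the Proposition, $\Theta_{-4}(\sigma)$ has several irreducible subquotients: tempered ones and $\theta_{-4}(\sigma)$. One must first identify which subquotient actually participates; the paper rules out the tempered possibility by lifting back to the metaplectic tower at $l=4$ and arriving at a contradiction with $l(\sigma)=0$. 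You skip this step entirely. Similarly, ``collapsing $\nu^{1/2}\times\nu^{3/2}$ onto $\delta([\nu^{1/2},\nu^{3/2}])$'' is not justified: after substituting, one has an epimorphism $\dotsm\times\nu^{1/2}\times\nu^{3/2}\rtimes\Theta_{-2}(\sigma)\twoheadrightarrow\theta_{-4}(\pi)$, and it could a priori factor through the piece $\dotsm\times\zeta(\frac{1}{2},\frac{3}{2})\rtimes\Theta_{-2}(\sigma)$ rather than through $\dotsm\times\delta([\nu^{1/2},\nu^{3/2}])\rtimes\Theta_{-2}(\sigma)$. The paper explicitly rules out this alternative (again by lifting back to the metaplectic tower and reaching a contradiction with Theorem \ref{generic_reps_metaplectic} and Proposition \ref{prop_reducibility_1_2}). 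These two exclusion arguments constitute the bulk of the paper's proof and cannot be omitted.

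For $\theta_2(\pi)$, the route via $\pi''$ is reasonable (and the identification $\Theta_2(\gamma_\psi^{-1}\nu^{1/2}\rtimes\pi'')\cong\Theta_0(\pi'')$, granted the precise Kudla filtration computation, is plausible because $\Theta_2(\pi'')=0$). The fatal step is ``composing with $\Theta_2(\gamma_{\psi}^{-1}\nu^{1/2}\rtimes\pi'')\twoheadrightarrow\Theta_2(\pi)$.'' The functor $\Xi\mapsto\Theta(\Xi,\omega)=\bigl(\Hom_{G_1}(\omega,\Xi)_\infty\bigr)^{\vee}$ is \emph{contravariant}: a surjection $\gamma_\psi^{-1}\nu^{1/2}\rtimes\pi''\twoheadrightarrow\pi$ only yields a natural map $\Theta_2(\pi)\to\Theta_2(\gamma_\psi^{-1}\nu^{1/2}\rtimes\pi'')$, going the opposite direction and not necessarily injective. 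There is no ``right-exactness of the theta functor'' to invoke, so you do not get a surjection $\Theta_0(\pi'')\twoheadrightarrow\Theta_2(\pi)$, and you cannot compare unique irreducible quotients as you do. The conclusion $\theta_2(\pi)=\theta_0(\pi'')$ is correct, but establishing it requires a genuine argument; for instance, one can instead show directly that $\theta_{-2}(\theta_0(\pi''))=\pi$ (using the relation between adjacent lifts and the identification of $\pi$ as the unique irreducible quotient of $\gamma_\psi^{-1}\nu^{1/2}\rtimes\pi''$) and then appeal to the bijectivity of the theta correspondence.
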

\begin{rem}
\begin{enumerate}[(i)]
\item We know that $\Theta_{-2}(\sigma)$ (the first lift on the non-split tower) is irreducible and tempered. This follows easily from Proposition 5.5 (2) of \cite{Atobe_Gan} (cf.~the comments at the end of Case 1, Section \ref{subs:std_mod_lifts}).
\item Any irreducible subquotient of $\Theta_{-4}(\sigma)$ is either tempered, or equal to $\theta_{-4}(\sigma)=L(\nu^{3/2},\Theta_{-2}(\sigma))$. This is Theorem 6.1 (iii) of \cite{Muic_theta_discrete_Israel}. The arguments are easily adjusted to the setting of $\widetilde{Sp(2n,F)}$.
\end{enumerate}
\end{rem}

\begin{proof} (of Proposition) The description of the first ($l=2$ and $l=0$) lifts on the going-down (which is, in this case, the split tower) follows directly from the discussion in the proof of Proposition \ref{prop:l(pi),l(sigma)=0}. As before, we get that on the non-split tower we have the following epimorphism (no exceptional situations):
\begin{equation}
\label{eq_first_occ}
\delta_1\nu^{s_1}\times \delta_2\nu^{s_2}\times \cdots\times \delta_{k-1}\nu^{s_{k-1}}\times \nu^{1/2}\rtimes \Theta_{-4}(\sigma)\twoheadrightarrow \theta_{-4}(\pi).
\end{equation}
We want to see which irreducible subquotient of $\Theta_{-4}(\sigma)$ participates in this epimorphism. By (ii) of the above Remark, we know that it is either some irreducible tempered representation, or it is $\theta_{-4}(\sigma)=L(\nu^{3/2},\Theta_{-2}(\sigma)).$ We now prove that the latter indeed occurs. Assume the opposite, i.e., that a certain irreducible tempered representation $\tau\le \Theta_{-4}(\sigma)$ participates. Then, we lift back to the metaplectic tower to get
\[\gamma_{\psi}^{-1}\delta_1\nu^{s_1}\times \gamma_{\psi}^{-1}\delta_2\nu^{s_2}\times \cdots\times \gamma_{\psi}^{-1}\delta_{k-1}\nu^{s_{k-1}}\times \gamma_{\psi}^{-1}\nu^{1/2}\rtimes \Theta_{4}(\tau)\twoheadrightarrow \pi,\]
if, among $\delta_i$'s there are no following representations: $\nu^{3/2}, St_{2}\nu^1,St_{3}\nu^{1/2}.$ This cannot happen, similarly to the proof of Proposition \ref{prop:l(pi),l(sigma)=0}. The uniqueness of Langlands classification now forces $\sigma\le \Theta_{4}(\tau).$ Specifically, $\Theta_{4}(\tau)\neq 0,$ which means, by Theorem 4.1. of \cite{Atobe_Gan}, that the L-parameter of $\tau$ contains $S_2\oplus S_4.$ Since all the subquotients of $\Theta_{4}(\tau)$ are tempered, by Lemma 6.4. of \cite{Atobe_Gan}, we get that $\theta_4(\tau)$ and $\sigma$ have the same $L$--parameter. But, since  L-parameter of $\theta_4(\tau)$ is $\phi_{\tau}-S_4,$ (cf.\cite{Atobe_Gan}, Theorem 4.3) we get that the $L$-parameter of $\sigma$ contains $S_2,$ a contradiction.

We conclude that $L(\nu^{3/2},\Theta_{-2}(\sigma))$ appears in \eqref{eq_first_occ} as a subquotient of $\Theta_{-4}(\sigma).$ Thus we either have
\begin{equation}
\label{pravi}
\delta_1\nu^{s_1}\times \delta_2\nu^{s_2}\times \cdots\times \delta_{k-1}\nu^{s_{k-1}}\times \delta([\nu^{1/2},\nu^{3/2}])\rtimes \Theta_{-2}(\sigma)\twoheadrightarrow \theta_{-4}(\pi)
\end{equation}
or
\[\delta_1\nu^{s_1}\times \delta_2\nu^{s_2}\times \cdots\times \delta_{k-1}\nu^{s_{k-1}}\times \nu^{3/2}\times \nu^{1/2}\rtimes \Theta_{-2}(\sigma)\twoheadrightarrow \theta_{-4}(\pi).\]
We now prove that the latter cannot happen.  Note that $\nu^{1/2}\rtimes \Theta_{-2}(\sigma)$  is irreducible by  Lemma \ref{irreducibility_1_2}, so we can exchange the exponent $\frac{1}{2}$ to $-\frac{1}{2}.$ Repeatedly using Lemma \ref{cor5.3} for $l=4$ we lift back the latter expression to the metaplectic tower (repeated application of \ref{cor5.3} is justified by the fact that the above representation has a unique irreducible quotient) we get, by Corollary 3.7 of \cite{bakic_generic}, either
\begin{equation}
\label{pos1}
\gamma_{\psi}^{-1}\delta_1\nu^{s_1}\times \gamma_{\psi}^{-1}\delta_2\nu^{s_2}\times \cdots\times \gamma_{\psi}^{-1}\delta_{k-1}\nu^{s_{k-1}}\times \gamma_{\psi}^{-1}\nu^{-\frac{1}{2}}\times  \gamma_{\psi}^{-1}\nu^{\frac{3}{2}}\rtimes \Theta_{4}(\Theta_{-2}(\sigma))\twoheadrightarrow \pi,
\end{equation}
or
\begin{equation}
\label{pos2}
\gamma_{\psi}^{-1}\delta_1\nu^{s_1}\times \gamma_{\psi}^{-1}\delta_2\nu^{s_2}\times \cdots\times \gamma_{\psi}^{-1}\delta_{k-1}\nu^{s_{k-1}}\times \gamma_{\psi}^{-1}\nu^{-1/2}\rtimes \Theta_{2}(\Theta_{-2}(\sigma))\twoheadrightarrow \pi.
\end{equation}
Note that each irreducible subquotient of $\Theta_{4}(\Theta_{-2}(\sigma))$ is tempered (see the above Remark and \cite[Proposition 5.5]{Atobe_Gan}),
so that \eqref{pos1} gives
\begin{equation}
\label{pos11}
\gamma_{\psi}^{-1}\delta_1\nu^{s_1}\times \gamma_{\psi}^{-1}\delta_2\nu^{s_2}\times \cdots\times \gamma_{\psi}^{-1}\delta_{k-1}\nu^{s_{k-1}}\times \gamma_{\psi}^{-1}\nu^{\frac{3}{2}}\times  \gamma_{\psi}^{-1}\nu^{-\frac{1}{2}}\rtimes \tau''\twoheadrightarrow \pi,
\end{equation}
for some irreducible tempered representation $\tau''\le \Theta_{4}(\Theta_{-2}(\sigma)).$ But, besides the Langlands subrepresentation, according to Lemma \ref{length_two} 1, any other subquotient of  $\gamma_{\psi}^{-1}\nu^{-\frac{1}{2}}\rtimes \tau''$  is tempered (note that the proof of that part of Lemma \ref{length_two} 1) does not depend on the properties of tempered $\sigma$).  In both of these cases we get a contradiction with the Langlands parametrization of $\pi.$
So, \eqref{pos2} has to occur. Note that $\Theta_{2}(\Theta_{-2}(\sigma))=\sigma$ by \cite[Proposition 5.4]{Atobe_Gan}). 
 But then \eqref{pos2} becomes exactly \eqref{eq_2}, which we saw in the proof of Proposition \ref{prop:l(pi),l(sigma)=0} could not happen. Thus, we have \eqref{pravi}. Note  that  the left-hand side there is isomorphic to a standard representation. Indeed, it may happen that for some $s_i,\;i=1,2,\ldots,k-1$ we have $s_i<1.$ But it means that either $\delta_i\nu^{s_i}$ and $ \delta([\nu^{1/2},\nu^{3/2}])$ are not linked, so we can change their position, or (the only possibility of linking with $s_1>0$)  $\delta_i\nu^{s_i}=\nu^{1/2}.$ This cannot happen, because this would mean that $\nu^{1/2}$ appears with multiplicity greater than one in the Langlands parameter of $\pi.$
\end{proof}
The case in which $\pi=L(\gamma_{\psi}^{-1}\delta_1\nu^{s_1},\dotsc,\gamma_{\psi}^{-1}\delta_{k-1}\nu^{s_{k-1}},\gamma_{\psi}^{-1}St_{2s_k}\nu^{s_{k}};\sigma)$ with  $s_k$ half integer greater or equal to $1$  is much easier because of Proposition $\ref{l(pi)}$ which guarantees that in that case $l(\pi)=l(\sigma)=0.$ Then, the split tower is  the going-down tower, the non-split tower is  the going up tower.
The description of the first lift on  the going-down tower can be read off  from the results of \cite{Gan_Savin_Metaplectic_2012}, and on the non-split tower we just follow the discussion in Proposition \ref{prop:l(pi),l(sigma)=0} when we know that $\Theta_{-2}(\pi)$ is non-zero on the non-split tower.
\begin{cor}
\label{s_k_1}
Let $\pi=L(\gamma_{\psi}^{-1}\delta_1\nu^{s_1},\dotsc,\gamma_{\psi}^{-1}\delta_{k-1}\nu^{s_{k-1}},\gamma_{\psi}^{-1}St_{2s_k}\nu^{s_{k}};\sigma)$ be a $\psi$--generic representation with $s_k\ge 1$ and $l(\sigma)=0.$ Then, on the split tower the first non-zero lift is given by
\[\theta_{0}(\pi)=L(\delta_1\nu^{s_1},\delta_2\nu^{s_2},\ldots,\delta_{k-1}\nu^{s_{k-1}},St_{2s_k}\nu^{s_{k}};\Theta_{0}(\sigma)).\]
On the non-split tower we get
\[\theta_{-2}(\pi)=L(\delta_1\nu^{s_1},\delta_2\nu^{s_2},\ldots,\delta_{k-1}\nu^{s_{k-1}},St_{2s_k}\nu^{s_{k}};\Theta_{-2}(\sigma)).\]
Here $\Theta_{-2}(\sigma)$ is irreducible and tempered first lift on the non-split tower.
\end{cor}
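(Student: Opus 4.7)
The plan is to reduce the problem to three straightforward steps, using Proposition \ref{l(pi)} to fix the first-occurrence level and then treating each tower separately via Theorem 1.3(iii) of \cite{Gan_Savin_Metaplectic_2012} on one side and Lemma \ref{cor5.3} on the other.

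First I would observe that since $s_k\geqslant 1$, in particular $s_k\neq \tfrac{1}{2}$. Combined with $l(\sigma)=0$, the second clause of Proposition \ref{l(pi)} then forces $l(\pi)=l(\sigma)=0$. Consequently, the split tower is the going-down tower for $\pi$ (first nonzero lift at $l=0$), while by the conservation relation the non-split tower is the going-up tower (first nonzero lift at $l=-2$, with $\Theta_{-2}(\sigma)\neq 0$).

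For the split tower, the formula for $\theta_{0}(\pi)$ follows directly from Theorem 1.3(iii) of \cite{Gan_Savin_Metaplectic_2012}, which asserts that the $\psi$-theta correspondence at $l=0$ matches Langlands data. Applied to our $\pi$, this gives
\[
\theta_{0}(\pi)=L(\delta_1\nu^{s_1},\delta_2\nu^{s_2},\ldots,\delta_{k-1}\nu^{s_{k-1}},St_{2s_k}\nu^{s_{k}};\Theta_{0}(\sigma)),
\]
as claimed.

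For the non-split tower, I would apply Lemma \ref{cor5.3} successively to each factor with $l=-2$. The exceptional case $\delta_i\nu^{s_i}\cong St_{n_i}\nu^{(l-n_i)/2}$ would require a strictly negative exponent, which is incompatible with $s_i>0$; so no exception occurs and one obtains
\[
\delta_1\nu^{s_1}\times\dotsm\times\delta_{k-1}\nu^{s_{k-1}}\times St_{2s_k}\nu^{s_k}\rtimes \Theta_{-2}(\sigma)\twoheadrightarrow \Theta_{-2}(\pi)\twoheadrightarrow\theta_{-2}(\pi).
\]
Because $s_1\geqslant\dotsm\geqslant s_k>0$ (as dictated by the Langlands data of $\pi$), the left-hand side is a standard representation whose inducing datum on the classical part is the irreducible tempered representation $\Theta_{-2}(\sigma)$ (irreducibility and temperedness follow from Proposition \ref{irreducibility_temp_lift} applied to the discrete-series part of the tempered support of $\sigma$, combined with Lemma \ref{cor5.3} exactly as in the remark preceding the statement). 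The unique Langlands quotient of this standard module must therefore coincide with $\theta_{-2}(\pi)$, yielding the claimed formula.

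The only subtlety is the irreducibility and temperedness of $\Theta_{-2}(\sigma)$ on the non-split tower; this is the main point to verify carefully, but it is essentially a direct consequence of Proposition \ref{irreducibility_temp_lift} after passing to the discrete-series part of the tempered support of $\sigma$, so I do not expect substantive difficulty. Everything else is formal once Proposition \ref{l(pi)} pins down $l(\pi)=0$ and the Lemma \ref{cor5.3} exceptions are ruled out by positivity of exponents.
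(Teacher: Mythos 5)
Your proposal is correct and follows essentially the same route as the paper: use Proposition \ref{l(pi)} (with $s_k\ge 1$) to pin down $l(\pi)=l(\sigma)=0$, read off the split-tower lift at $l=0$ from Theorem~1.3(iii) of \cite{Gan_Savin_Metaplectic_2012}, and on the non-split tower apply Lemma~\ref{cor5.3} at $l=-2$ (no exceptional case, since the exceptional form of $St_t\nu^{(l-t)/2}$ would force a negative exponent) to obtain a standard module surjecting onto $\theta_{-2}(\pi)$.

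One small citation slip: the irreducibility and temperedness of $\Theta_{-2}(\sigma)$ on the non-split tower is not covered by Proposition~\ref{irreducibility_temp_lift}, which concerns the equal-rank lift at $l=0$; as the remark after Proposition~\ref{first_lifts} correctly indicates, the needed statement for the first going-up lift of a discrete series (and then, via the tempered support and Lemma~\ref{cor5.3}, of a tempered $\sigma$) is Proposition~5.5(2) of \cite{Atobe_Gan}. Since you explicitly defer to that remark, the gap is one of attribution rather than substance, but the reference you give in the parenthetical is the wrong one.
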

\begin{rem} Note that, for  $\psi$--generic 
\[\pi=L(\gamma_{\psi}^{-1}\delta_1\nu^{s_1},\gamma_{\psi}^{-1}\delta_2\nu^{s_2},\ldots,\gamma_{\psi}^{-1}\delta_{k-1}\nu^{s_{k-1}},\gamma_{\psi}^{-1}St_{2s_k}\nu^{s_{k}};\sigma),\]
 when both $\theta_l(\sigma)$ and $\theta_l(\pi)$ are defined  in Proposition \ref{first_lifts} and Corollary \ref{s_k_1}, we have 
\[\delta_1\nu^{s_1}\times \delta_2\nu^{s_2}\times \cdots\times \delta_{k-1}\nu^{s_{k-1}}\times St_{2s_k}\nu^{s_{k}}\rtimes \theta_l(\sigma)\twoheadrightarrow \theta_{l}(\pi).\]
\end{rem}
\subsection{The higher lifts  of a $\psi$--generic representation\\ $L(\gamma_{\psi}^{-1}\delta_1\nu^{s_1},\allowbreak \gamma_{\psi}^{-1}\delta_2\nu^{s_2},\ldots,\gamma_{\psi}^{-1}\delta_{k-1}\nu^{s_{k-1}},\gamma_{\psi}^{-1}St_{2s_k}\nu^{s_{k}};\sigma)$ with $l(\sigma)=0$}
\label{sec:exceptions2} 
 We can  directly check that Lemma \ref{lem:main} also works in this setting. Thus, let $\delta_1',\ldots,\delta_l'$ and $\pi_{00}$ be the tempered support of $\sigma,$ so that we have
 \[\gamma_{\psi}^{-1}\delta_1'\times \gamma_{\psi}^{-1}\delta_2'\times \cdots \gamma_{\psi}^{-1}\delta_l'\rtimes \pi_{00}\twoheadrightarrow \sigma;\]
 we denote $\Delta=\delta_1'\times \delta_2'\times \cdots \times \delta_l'.$
Lemma \ref{lem:main} then gives, for $l\ge 0$ such that $\theta_{-l}(\pi)\neq 0,$ for both towers,
 \begin{equation}
 \label{prop_4_7}
 \delta_1\nu^{s_1}\times \delta_2\nu^{s_2}\times \cdots\times \delta_{k-1}\nu^{s_{k-1}}\times St_{2s_k}\nu^{s_{k}}\times \Delta\rtimes \theta_{-l}(\pi_{00})\twoheadrightarrow \theta_{-l}(\pi).
 \end{equation}
We will also use (i) and (iii) of Proposition \ref{prop:lifts_temp_Mp}
 

We start with the description of the higher lifts on the going-down tower.
 \begin{prop}
 \label{higher_lifts_l(sigma)=0_down}
 Let $\sigma$ be a $\psi$--generic tempered representation with $l(\sigma)=0.$ Then, for $\pi=L(\gamma_{\psi}^{-1}\delta_1\nu^{s_1},\gamma_{\psi}^{-1}\delta_2\nu^{s_2},\ldots,\gamma_{\psi}^{-1}\delta_{k-1}\nu^{s_{k-1}},\gamma_{\psi}^{-1}St_{2s_k}\nu^{s_{k}};\sigma)$ with $s_k\ge \frac{1}{2}$ for the lifts $l>0$ on the going-down tower we have
 \[\theta_{-l}(\pi)=L(\delta_1\nu^{s_1},\delta_2\nu^{s_2},\ldots,\delta_{k-1}\nu^{s_{k-1}},St_{2s_k}\nu^{s_{k}},\nu^{\frac{l-1}{2}}, \nu^{\frac{l-3}{2}}, \ldots, \nu^{\frac{1}{2}}; \theta_0(\sigma)).\]
 Here again we assume that $\nu^{\frac{l-1}{2}}, \nu^{\frac{l-3}{2}}, \ldots, \nu^{\frac{1}{2}}$ are distributed in descending order among
 $\delta_1\nu^{s_1},\delta_2\nu^{s_2},\ldots,\delta_{k-1}\nu^{s_{k-1}},St_{2s_k}\nu^{s_{k}}.$ 
 \end{prop}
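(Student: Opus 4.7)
The plan is to mirror the strategy of Case 1 of Section \ref{subs:higher}, treating $St_{2s_k}\nu^{s_k}$ as an additional factor requiring care. First, I would combine the epimorphism \eqref{prop_4_7} with Proposition \ref{prop:lifts_temp_Mp}(i), which applies because $l(\pi_{00})=0$ follows from $l(\sigma)=0$ (the L-parameter of $\pi_{00}$ is a sub-parameter of that of $\sigma$). This yields
\[
\delta_1\nu^{s_1}\times\cdots\times\delta_{k-1}\nu^{s_{k-1}}\times St_{2s_k}\nu^{s_k}\times\Delta\times\zeta\bigl(\tfrac{1}{2},\tfrac{l-1}{2}\bigr)\rtimes\theta_0(\pi_{00})\twoheadrightarrow\theta_{-l}(\pi).
\]
I would then commute $\zeta\bigl(\tfrac{1}{2},\tfrac{l-1}{2}\bigr)$ past every factor of $\Delta$ using Lemma \ref{lem:zelevinsky}. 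The only segment appearing in $\Delta$ that could be linked to the singletons in the Zelevinsky support of $\zeta$ is $St_2=\delta([\nu^{-1/2},\nu^{1/2}])$, but $St_2$ cannot appear in $\Delta$ because $l(\sigma)=0$ prevents $S_2$ from occurring in the L-parameter of $\sigma$ (hence of its tempered support). Following the identification of the tempered subquotient carried out in Case 1 of Section \ref{subs:higher}, I would then replace $\Delta\rtimes\theta_0(\pi_{00})$ by $\theta_0(\sigma)$, which is irreducible and tempered, arriving at
\[
\delta_1\nu^{s_1}\times\cdots\times\delta_{k-1}\nu^{s_{k-1}}\times St_{2s_k}\nu^{s_k}\times\zeta\bigl(\tfrac{1}{2},\tfrac{l-1}{2}\bigr)\rtimes\theta_0(\sigma)\twoheadrightarrow\theta_{-l}(\pi).
\]

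The core task is to show that this left-hand side is a quotient of the target standard module, after which the unique-quotient property of Langlands classification identifies $\theta_{-l}(\pi)$ as claimed. I would adapt the iterative rearrangement of Lemma \ref{lem:unique_quotient} and Lemma \ref{lem:tricky_condition}, processing each $\delta_i\nu^{s_i}$ (for $i\le k-1$) exactly as in Case 1 of Section \ref{subs:higher}; this works because each $\delta_i\nu^{s_i}$ is defined by a segment ending in $\nu^{b_i}$ with $b_i\ge 0$, so condition (i) of Lemma \ref{lem:tricky_condition} can be maintained throughout the recursion. The main obstacle is the processing of $St_{2s_k}\nu^{s_k}=\delta([\nu^{1/2},\nu^{2s_k-1/2}])$ itself: its segment is linked to the singleton $[\nu^{2s_k+1/2}]$, which appears in the Zelevinsky support of $\zeta\bigl(\tfrac{1}{2},\tfrac{l-1}{2}\bigr)$ precisely when $l\ge 4s_k+2$, so condition (i) of Lemma \ref{lem:tricky_condition} fails when we try to move $\nu^{2s_k+1/2}$ past $St_{2s_k}\nu^{s_k}$. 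In this situation Lemma \ref{lem:zelevinsky}(iii) offers two candidate irreducible subquotients: $L(\nu^{2s_k+1/2},St_{2s_k}\nu^{s_k})$, matching the target Langlands data, and the merged segment $\delta([\nu^{1/2},\nu^{2s_k+1/2}])=St_{2s_k+1}\nu^{s_k+1/2}$. I expect only the former to participate in the surjection: the latter would, when lifted back to the metaplectic tower via Lemma \ref{cor5.3}, contradict the uniqueness of a factor of the form $\delta([\nu^{1/2},\nu^{m+1/2}])$ in the Langlands data of $\pi$ as dictated by Theorem \ref{generic_reps_metaplectic}; alternatively one can rule it out via a Jacquet-module comparison, arguing that the merged segment forces a cuspidal exponent $\nu^{-(2s_k+1/2)}$ in the Jacquet module of $\Theta_{-l}(\sigma)$ that is incompatible with Casselman's criterion applied to $\theta_0(\sigma)$.

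Once the surjection from the target standard module is in place, uniqueness of the Langlands quotient identifies $\theta_{-l}(\pi)$ as claimed. To complete the argument I would invoke analogs of Lemma \ref{lemma:really_subq} and Lemma \ref{lem:really_small_theta} to pin down the tempered part as $\theta_0(\sigma)$ within the relevant subquotients of $\Theta_{-l}(\sigma)$, paralleling the end of Case 1 in Section \ref{subs:higher}. I expect the linkage between $St_{2s_k}\nu^{s_k}$ and $\nu^{2s_k+1/2}$, together with the subtleties of sorting the Langlands data when $s_k$ is small compared to $l$, to account for the bulk of the technical work.
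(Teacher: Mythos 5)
Your overall strategy is the right one and coincides with the paper's: combine \eqref{prop_4_7} with Proposition~\ref{prop:lifts_temp_Mp}(i), commute $\zeta\bigl(\tfrac{1}{2},\tfrac{l-1}{2}\bigr)$ past $\Delta$ (no $\text{St}_2$ appears since $l(\sigma)=0$), identify the tempered part, and run the machinery of Lemmas~\ref{lem:unique_quotient}, \ref{lemma:really_subq} and \ref{lem:really_small_theta}. Indeed the paper's own proof is nothing more than the observation that these three lemmas still apply, noting that their proofs never actually use the irreducibility of the standard module of $\pi$. So far so good.

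However, the ``main obstacle'' you identify is not there, and the rescue you build to circumvent it is unnecessary. Lemma~\ref{lem:tricky_condition} never puts an isolated singleton $\nu^{2s_k+1/2}$ next to $St_{2s_k}\nu^{s_k}$: it slides the Langlands-quotient block $\zeta(s,d)$, not individual $\nu^{j}$'s, past each $\delta$-factor. Since $St_{2s_k}\nu^{s_k}=\delta([\nu^{1/2},\nu^{2s_k-1/2}])$ has the smallest exponent among $\delta_1\nu^{s_1},\dotsc,\delta_k\nu^{s_k}$, it is adjacent to $\zeta$ and is processed first, with $[c,d]=[\tfrac{1}{2},\tfrac{l-1}{2}]$. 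Condition~\eqref{eq:tricky1} reads $c\neq b+1$, i.e.\ $\tfrac{1}{2}\neq 2s_k+\tfrac{1}{2}$, which holds for every $s_k>0$; equivalently, the resulting $s$ satisfies $s\leqslant 2s_k-\tfrac{1}{2}=b$, so $\zeta(s,d)\times St_{2s_k}\nu^{s_k}$ is irreducible by Lemma~\ref{lem:zelevinsky}(i) (segment $[s,d]$ meets $[\tfrac{1}{2},2s_k-\tfrac{1}{2}]$ from above) or (ii) (containment, when $d\leqslant b$). The singleton $\nu^{2s_k+1/2}$ appears inside $\zeta(s,d)$ but is never separated from it, so Lemma~\ref{lem:zelevinsky}(iii) is never invoked, and neither the merged segment $\delta([\nu^{1/2},\nu^{2s_k+1/2}])$ nor its exclusion via a theta-lift or Jacquet-module detour ever enters the picture. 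Moreover, the a~fortiori condition~\eqref{eq:tricky2} also holds at this first step ($s-1<s_k$), so by the second part of Lemma~\ref{lem:tricky_condition} condition~\eqref{eq:tricky1} propagates automatically to all the remaining $\delta_i\nu^{s_i}$ (none of which can be of the form $\delta([\nu^{1/2},\nu^{m+1/2}])$, by Theorem~\ref{generic_reps_metaplectic}). The final sorted product then genuinely is a standard module, and the conclusion follows exactly as in Case~1 of Section~\ref{subs:higher}, without the extra branch you introduce.
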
 
 \begin{proof}
We can apply Lemmas \ref{lem:unique_quotient}, \ref{lemma:really_subq} and \ref{lem:really_small_theta}. Note that we can avoid using the irreducibility of the standard module for $\pi$ (which can be reducible in our case) in these lemmas.
 \end{proof}
We now address the lifts on the going-up tower. From equation \eqref{prop_4_7} and Proposition \ref{prop:lifts_temp_Mp}, we get that there is an epimorphism
 \[\delta_1\nu^{s_1}\times \delta_2\nu^{s_2}\times \cdots\times \delta_{k-1}\nu^{s_{k-1}}\times St_{2s_k}\nu^{s_{k}}\times \Delta\times \zeta(\frac{3}{2}, \frac{l-1}{2})\rtimes \theta_{-2}(\pi_{00})\twoheadrightarrow \theta_{-l}(\pi).\]
 Since there is no summand of the form $S_2$ in the L-parameter of $\sigma,$ and consequently, no $\delta_i'$ is equal to $\delta([\nu^{-1/2},\nu^{1/2}]),$ we can jump by isomorphisms with $\zeta(\frac{3}{2}, \frac{l-1}{2})$ over whole $\Delta$ (cf.~Lemma \ref{lem:zelevinsky}). We conclude that there exists an irreducible tempered subquotient $\tau$ of the representation $\Delta \rtimes \theta_{-2}(\pi_{00})$
 such that
 \begin{equation}
 \label{zadnji_slucaj}
 \delta_1\nu^{s_1}\times \delta_2\nu^{s_2}\times \cdots\times \delta_{k-1}\nu^{s_{k-1}}\times St_{2s_k}\nu^{s_{k}}\times \zeta(\frac{3}{2}, \frac{l-1}{2})\rtimes\tau\twoheadrightarrow \theta_{-l}(\pi).
 \end{equation}
 As before, using Lemma \ref{lemma:really_subq} and the arguments of Section \ref{subs:higher}, Case 3 (including Lemmas 7.11 and 7.12 of \cite{bakic_generic}), we get that necessarily $\tau=\theta_{-2}(\sigma).$
We now want to explicitly describe the standard module of $\theta_{-l}(\pi),$ i.e., how $\nu^{3/2},\ldots,\nu^{\frac{l-1}{2}}$ distribute among $\delta_1\nu^{s_1},\delta_2\nu^{s_2},\ldots,\delta_{k-1}\nu^{s_{k-1}}, St_{2s_k}\nu^{s_{k}}.$

To this end, we invoke Lemma \ref{lem:tricky_condition} again.
In this case we have $\zeta(c,d) = \zeta(\frac{3}{2}, \frac{l-1}{2})$ so the only problematic situation arises if $\delta([\nu^a,\nu^b]) = \delta([\nu^{a},\nu^{1/2}])$,
 i.e., if $St_{2s_k}\nu^{s_{k}}$ appears with $s_k=\frac{1}{2}.$ So we now concentrate on that case (recall that $\nu^{1/2}$ can only appear once in the Langlands parameter of $\pi$).

We deduce that we either have $\delta([\nu^{1/2},\nu^{3/2}])$ appearing in the Langlands parameter of $\theta_{-l}(\pi),$ as in the Langlands parameter of $\theta_{-4}(\pi)$ (cf.~Proposition \ref{first_lifts}) and we rearrange $\nu^{5/2},\ldots,\nu^{\frac{l-1}{2}}$
 among $\delta_1\nu^{s_1}\times \delta_2\nu^{s_2}\times \cdots\times \delta_{k-1}\nu^{s_{k-1}}$ since there are no more problematic situations, as we explained by Lemma  \ref{lem:tricky_condition}, or we have $\nu^{1/2}$ appearing in the L-parameter and and we rearange $\nu^{3/2},\ldots,\nu^{\frac{l-1}{2}}$
 among $\delta_1\nu^{s_1}\times \delta_2\nu^{s_2}\times \cdots\times \delta_{k-1}\nu^{s_{k-1}}.$ We now show that the second possibility cannot occur. 

 \noindent We mimic the proof of Proposition \ref{first_lifts}, so we need to get back to the metaplectic tower. To be able to do that and not to land in the exceptional situation of Lemma \ref{cor5.3}, for each $\delta_i\nu^{s_i},\;i=1,2,\ldots,k-1$ of the form $St_{t}\nu^{\frac{l-t}{2}},$ we interchange it with $\delta_i\nu^{-s_i}$ since $\delta_i\nu^{s_i}\rtimes \sigma$ is irreducible.
 We write the product of all such $\delta_i \nu^{s_i}$'s by $T_1,$ and product of the rest of $\delta_i\nu^{s_i},\;i=1,2,\ldots,k-1$ outside of $T_1$ by $T_2.$
 Then, from \eqref{zadnji_slucaj}, we get
 \[\widetilde{T_1}\times T_2\times \nu^{1/2}\times \zeta(\frac{3}{2}, \frac{l-1}{2})\rtimes\theta_{-2}(\sigma)\twoheadrightarrow \theta_{-l}(\pi).\]
 Indeed, note that $St_{t}\nu^{-\frac{l-t}{2}}\neq St_{t}\nu^{\frac{-l-t}{2}},$ so this goes through without exceptions.
 If we assume that the second possibility from the above occurs, we would have, analogously as in the proof of Proposition \ref{first_lifts},
 \[
 \label{eq:T-ovi}\tag{\textasteriskcentered}
 \widetilde{T_1}\times T_2\times \zeta(\frac{3}{2}, \frac{l-1}{2})\times \nu^{-1/2}\rtimes\theta_{-2}(\sigma)\twoheadrightarrow \theta_{-l}(\pi).\]
 Now we go back to the metaplectic tower by calculating $\Theta_{l}$ and we get either
\[\gamma_{\psi}^{-1}\widetilde{T_1}\times \gamma_{\psi}^{-1}T_2\times \gamma_{\psi}^{-1}\nu^{-1/2}\times \gamma_{\psi}^{-1}\zeta(\frac{3}{2}, \frac{l-1}{2})\rtimes \tau\twoheadrightarrow \pi,\]
or 
\[\gamma_{\psi}^{-1}\widetilde{T_1}\times \gamma_{\psi}^{-1}T_2\times \gamma_{\psi}^{-1}\nu^{-1/2}\rtimes \sigma\twoheadrightarrow \pi,\]
for some tempered representation $\tau.$ Here $\tau\le \Theta_{l}(\theta_{-2}(\sigma)).$ This follows easily from the calculations of the isotypic components of the twists of the trivial representation in the Kudla's filtration, similarly to the calculations on the isotypic components of the essentially square-integrable representations. Note that the left-hand side of \eqref{eq:T-ovi} is easily shown to have a unique irreducible quotient, which justifies the repeated application of Lemma \ref{cor5.3}. Since $\pi$ is $\psi$-generic, from the multiplicity one for the generic representations, it follows that the irreducible $\psi$--generic subquotient of 
$\gamma_{\psi}^{-1}\zeta(\frac{3}{2}, \frac{l-1}{2})\rtimes \tau$ which participates in the epimorphism onto $\pi$ in the former case is necessarily just $\sigma.$ Now, in both cases we get
\[\gamma_{\psi}^{-1}\widetilde{T_1}\times \gamma_{\psi}^{-1}T_2\times \gamma_{\psi}^{-1}\nu^{-1/2}\rtimes \sigma\twoheadrightarrow \pi.\]
Now, from Theorem \ref{generic_reps_metaplectic}, using intertwining operators in a usual way, we get that we can transform the left-hand side above in
\[\gamma_{\psi}^{-1}T_1\times \gamma_{\psi}^{-1}T_2\times \gamma_{\psi}^{-1}\nu^{-1/2}\rtimes \sigma\twoheadrightarrow \pi.\]
But this leads to a contradiction in the same way as in the proof of Proposition \ref{first_lifts}.
We have proved the following 

\begin{prop}
 \label{higher_lifts_l(sigma)=0_up}
 Let $\sigma$ be a $\psi$--generic tempered representation with $l(\sigma)=0.$ Then, for  a $\psi$-generic representation $\pi=L(\gamma_{\psi}^{-1}\delta_1\nu^{s_1},\dotsc,\gamma_{\psi}^{-1}\delta_{k-1}\nu^{s_{k-1}},\allowbreak \gamma_{\psi}^{-1}St_{2s_k}\nu^{s_{k}};\sigma)$ with $s_k\ge \frac{1}{2}$ for the lifts  on the going-up tower we have
 \begin{itemize}
 \item[(i)] If $s_k\ge 1$ for $l\ge 4$ then
 \[
 \theta_{-l}(\pi)=L(\delta_1\nu^{s_1}\dotsc,\delta_{k-1}\nu^{s_{k-1}},St_{2s_k}\nu^{s_{k}},\nu^{\frac{l-1}{2}}, \nu^{\frac{l-3}{2}}, \ldots, \nu^{\frac{3}{2}}; \Theta_{-2}(\sigma)).\]
 Here again we assume that $\nu^{\frac{l-1}{2}}, \nu^{\frac{l-3}{2}}, \ldots, \nu^{\frac{3}{2}}$ are distributed in the descending order among
 $\delta_1\nu^{s_1},\delta_2\nu^{s_2},\ldots,\delta_{k-1}\nu^{s_{k-1}},St_{2s_k}\nu^{s_{k}}.$ 
 \item [(ii)] If $s_k=1/2$ for $l\ge 6$ then
 \[\theta_{-l}(\pi)=L(\delta_1\nu^{s_1},\dotsc,\delta_{k-1}\nu^{s_{k-1}},\nu^{\frac{l-1}{2}}, \nu^{\frac{l-3}{2}}, \ldots, \nu^{\frac{5}{2}}, \delta([\nu^{1/2},\nu^{3/2}]);\Theta_{-2}(\sigma)).\]
 \end{itemize}
 \end{prop}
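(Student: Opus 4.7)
\textbf{Proof plan for Proposition \ref{higher_lifts_l(sigma)=0_up}.} The starting point is the epimorphism \eqref{prop_4_7}, which follows from repeated application of Lemma \ref{cor5.3} (justified as in Lemma \ref{lem:main}, with none of the $\delta_i\nu^{s_i}$ being of the exceptional form $St_t\nu^{(l-t)/2}$ after possibly interchanging with $\widetilde{\delta_i}\nu^{-s_i}$ using Theorem \ref{generic_reps_metaplectic}). Since $l(\sigma)=0$ forces $l(\pi_{00})=0$, Proposition \ref{prop:lifts_temp_Mp}(ii) expresses $\theta_{-l}(\pi_{00})$ as the unique irreducible quotient of $\zeta(\tfrac{3}{2},\tfrac{l-1}{2})\rtimes \theta_{-2}(\pi_{00})$, giving
\[
\delta_1\nu^{s_1}\times\cdots\times \delta_{k-1}\nu^{s_{k-1}}\times St_{2s_k}\nu^{s_k}\times \Delta\times \zeta(\tfrac{3}{2},\tfrac{l-1}{2})\rtimes \theta_{-2}(\pi_{00}) \twoheadrightarrow \theta_{-l}(\pi).
\]

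Next, I would commute $\zeta(\tfrac{3}{2},\tfrac{l-1}{2})$ past $\Delta$ via Lemma \ref{lem:zelevinsky}. The problematic segment would be $\delta([\nu^{-1/2},\nu^{1/2}])$, but since $l(\sigma)=0$ ensures $S_2$ does not occur in the parameter of $\sigma$, no $\delta_i'$ in $\Delta$ equals $\delta([\nu^{-1/2},\nu^{1/2}])$. After commuting, there exists an irreducible tempered subquotient $\tau$ of $\Delta\rtimes\theta_{-2}(\pi_{00})$ with
\[
\delta_1\nu^{s_1}\times\cdots\times St_{2s_k}\nu^{s_k}\times \zeta(\tfrac{3}{2},\tfrac{l-1}{2})\rtimes \tau \twoheadrightarrow \theta_{-l}(\pi).
\]
Using the isotypic-component argument of Lemma \ref{lemma:really_subq} together with the reasoning of Case 3 of Section \ref{subs:higher}, I would then identify $\tau=\theta_{-2}(\sigma)$.

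To produce the standard module, I would apply Lemma \ref{lem:tricky_condition} inductively, inserting $\nu^{(l-1)/2},\ldots,\nu^{3/2}$ among the $\delta_i\nu^{s_i},St_{2s_k}\nu^{s_k}$ so as to obtain a decreasing sequence of exponents. In case (i), $s_k\ge 1$ means $St_{2s_k}\nu^{s_k}$ is not defined by a segment ending at $\nu^{1/2}$, so condition (i) of Lemma \ref{lem:tricky_condition} is automatically satisfied; the procedure terminates directly with the claimed Langlands data.

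The main obstacle is case (ii), when $s_k=\tfrac{1}{2}$ so $St_{2s_k}\nu^{s_k}=\nu^{1/2}$: the segments $[\nu^{1/2}]$ and $[\nu^{3/2},\nu^{(l-1)/2}]$ are linked, invalidating Lemma \ref{lem:tricky_condition}. By Lemma \ref{lem:zelevinsky}(iii) there are exactly two possible standard modules compatible with the above epimorphism: one where $\delta([\nu^{1/2},\nu^{3/2}])$ appears and $\nu^{5/2},\ldots,\nu^{(l-1)/2}$ are then redistributed among the remaining factors (for which Lemma \ref{lem:tricky_condition} now applies), and one where $\nu^{1/2}$ and $\nu^{3/2}$ appear separately. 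I plan to rule out the second possibility by mimicking the contradiction argument in Proposition \ref{first_lifts}: interchange each $\delta_i\nu^{s_i}$ of the form $St_t\nu^{(l-t)/2}$ with its dual (using the irreducibility from Theorem \ref{generic_reps_metaplectic}), lift back to the metaplectic tower by computing $\Theta_l$, and obtain either an epimorphism involving $\Theta_l(\theta_{-2}(\sigma))$ or $\Theta_2(\theta_{-2}(\sigma))=\sigma$; in each case a uniqueness argument based on Langlands classification and heredity of $\psi$-genericity yields an epimorphism $\gamma_\psi^{-1}\widetilde{T_1}\times\gamma_\psi^{-1}T_2\times\gamma_\psi^{-1}\nu^{-1/2}\rtimes\sigma\twoheadrightarrow\pi$, contradicting Proposition \ref{prop_reducibility_1_2} exactly as in Proposition \ref{prop:l(pi),l(sigma)=0}. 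This forces the first possibility, completing case (ii).
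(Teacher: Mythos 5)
Your proposal follows the paper's argument step for step: start from \eqref{prop_4_7} combined with Proposition \ref{prop:lifts_temp_Mp}, commute $\zeta(\frac{3}{2},\frac{l-1}{2})$ past $\Delta$ (no $\delta([\nu^{-1/2},\nu^{1/2}])$ since $l(\sigma)=0$), identify $\tau=\theta_{-2}(\sigma)$ via Lemma \ref{lemma:really_subq} and the Case 3 arguments, then handle case (i) directly with Lemma \ref{lem:tricky_condition} and rule out the second alternative in case (ii) by lifting back to the metaplectic tower (after swapping each $St_t\nu^{(l-t)/2}$ factor with its contragredient) and invoking Proposition \ref{prop_reducibility_1_2} as in Proposition \ref{first_lifts}. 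This is essentially identical to the paper's proof.
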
 

 Now we just comment the case of the lifts on the pair of the orthogonal towers with $\chi_V\neq 1.$ Then, to be in accordance with the notation in \cite{Atobe_Gan}, especially suited for the application of Corollary 5.3. there, we write  $\pi=L(\gamma_{\psi}^{-1}\delta_1\nu^{s_1},\gamma_{\psi}^{-1}\delta_2\nu^{s_2},\ldots,\gamma_{\psi}^{-1}\delta_{k-1}\nu^{s_{k-1}},\gamma_{\psi}^{-1}St_{2s_k}\nu^{s_{k}};\sigma)$  above as  $\pi=L(\gamma_{\psi}^{-1}\chi_V\delta_1'\nu^{s_1},\gamma_{\psi}^{-1}\chi_V\delta_2'\nu^{s_2},\ldots,\gamma_{\psi}^{-1}\chi_V\delta_{k-1}'\nu^{s_{k-1}},\gamma_{\psi}^{-1}(\chi_V)^2St_{2s_k}\nu^{s_{k}};\sigma)$ with $s_k\ge \frac{1}{2}.$ Note that in this situation we always have  that $l(\sigma),$ which we now  denote by $l_{\chi_V}(\sigma)$ to emphasize the towers where we lift, is equal to zero (Theorem 4.1. of \cite{Atobe_Gan}). Now, we can repeat the arguments of Proposition 3.1 verbatim. Note that the exception in the case $l_{\chi_V}(\sigma)$ occurs if $\chi_V\nu^{1/2}$ appears in the L-parameter of $\pi,$ (i.e. some $\delta_i'\nu^{s_i}=\nu^{1/2}$). But, since $\chi_V\nu^{1/2}\rtimes \sigma$ is irreducible by Theorem \ref{generic_reps_metaplectic}, we can exchange $\chi_V\nu^{1/2}\rtimes \sigma$ by $\chi_V\nu^{-1/2}\rtimes \sigma$ and avoid that exception. Thus, $l_{\chi_V}(\pi)=0.$ We know that the first  full-lifts of $\sigma$ on both $\chi_V$--towers are irreducible and tempered, so we have the analog of Corollary \ref{s_k_1}: 
\begin{prop}
\label{non_trivial_charcter_first}
Let $\chi_V$ be a non-trivial quadratic character of $F^*.$ Let 
\[\pi=L(\gamma_{\psi}^{-1}\chi_V\delta_1'\nu^{s_1},\gamma_{\psi}^{-1}\chi_V\delta_2'\nu^{s_2},\ldots,\gamma_{\psi}^{-1}\chi_V\delta_{k-1}'\nu^{s_{k-1}},\gamma_{\psi}^{-1}St_{2s_k}\nu^{s_{k}};\sigma)\]
 be a $\psi$--generic representation. Then, on the split $\chi_V$--tower the first non-zero lift is given by
\[\theta_{0}(\pi)=L(\delta_1\nu^{s_1},\delta_2\nu^{s_2},\ldots,\delta_{k-1}\nu^{s_{k-1}},\chi_VSt_{2s_k}\nu^{s_{k}};\Theta_{0}(\sigma)).\]
On the non-split tower we get
\[\theta_{-2}(\pi)=L(\delta_1\nu^{s_1},\delta_2\nu^{s_2},\ldots,\delta_{k-1}\nu^{s_{k-1}},\chi_VSt_{2s_k}\nu^{s_{k}};\Theta_{-2}(\sigma)).\]
Here $\Theta_{-2}(\sigma)$ is irreducible and tempered first lift on the non-split $\chi_V$--tower.
\end{prop}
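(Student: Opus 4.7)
The plan is to mirror the argument used for Corollary \ref{s_k_1} in the trivial character case, exploiting the fact---noted in the preceding discussion---that when $\chi_V \neq 1$ one always has $l_{\chi_V}(\sigma)=0$ by Theorem 4.1 of \cite{Atobe_Gan}. Concretely, I would first verify that $l_{\chi_V}(\pi)=0$ on both towers. The inequality $l_{\chi_V}(\pi)\geqslant l_{\chi_V}(\sigma)=0$ follows exactly as in Proposition \ref{l(pi)} via repeated application of Lemma \ref{cor5.3} on the going-up tower at level $l=2$. For the reverse inequality, one needs to avoid the exceptional case of Lemma \ref{cor5.3}, which at $l=0$ requires that no $\gamma_\psi^{-1}\chi_V\delta_i'\nu^{s_i}$ nor $\gamma_\psi^{-1}St_{2s_k}\nu^{s_k}$ coincide with an inducing representation of the form $St_t\nu^{(0-t)/2}=St_t\nu^{-t/2}$; with $s_i,s_k>0$ this is automatic.

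Granted $l_{\chi_V}(\pi)=0$, for the going-down (split) $\chi_V$-tower I would apply Lemma \ref{cor5.3} iteratively at $l=0$ to produce the epimorphism
\[
\chi_V\delta_1'\nu^{s_1}\times\cdots\times\chi_V\delta_{k-1}'\nu^{s_{k-1}}\times\chi_V St_{2s_k}\nu^{s_k}\rtimes\Theta_0(\sigma)\twoheadrightarrow\Theta_0(\pi)\twoheadrightarrow\theta_0(\pi).
\]
By Proposition \ref{irreducibility_temp_lift} applied to the tempered $\sigma$, the lift $\Theta_0(\sigma)$ is irreducible and tempered; hence the left-hand side is a genuine standard module whose unique irreducible quotient is the advertised Langlands quotient. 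An identical argument on the non-split $\chi_V$-tower at $l=-2$, using that $\Theta_{-2}(\sigma)$ is likewise irreducible and tempered as the first lift on the going-up tower for a generic tempered $\sigma$ with $l_{\chi_V}(\sigma)=0$ (again by \cite[Theorem 4.1, Proposition 5.5]{Atobe_Gan} in the odd-orthogonal $\chi_V$ setting, mirroring the remark after Proposition \ref{first_lifts}), yields the analogous statement for $\theta_{-2}(\pi)$.

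The one delicate point---and the step I expect to require the most care---is ensuring that the hypothesis of Lemma \ref{cor5.3} is genuinely satisfied at each factor. The potential obstruction is an occurrence $\chi_V\delta_i'\nu^{s_i}=\chi_V\nu^{1/2}$ (i.e.\ $s_i=1/2$ and $\delta_i'=\mathbb{1}$) colliding with the exceptional shape. Here Theorem \ref{generic_reps_metaplectic}(i) rescues us: since $\chi_V\nu^{1/2}$ is not of the form $\delta([\nu^{1/2},\nu^{m+1/2}])$ (the latter requires the trivial character), the induced representation $\gamma_\psi^{-1}\chi_V\nu^{1/2}\rtimes\sigma$ is irreducible. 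Consequently we may replace this factor by $\gamma_\psi^{-1}\chi_V\nu^{-1/2}$ in the standard module for $\pi$, and then $\chi_V\nu^{-1/2}$ is outside the exceptional list of Lemma \ref{cor5.3} at $l=0$. The same manoeuvre works at $l=-2$, because $\chi_V\nu^{1/2}\neq St_t\nu^{(-2-t)/2}$ for any $t\geqslant 1$, so in fact the swap is only needed on the going-up side when we verified $l_{\chi_V}(\pi)=0$.

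Finally, to certify that the resulting standard module is in Langlands form, I would rearrange the factors so that exponents appear in weakly decreasing order. Since $\chi_V\neq 1$ and $\chi_V^2=1$, the factor $\chi_V St_{2s_k}\nu^{s_k}$ is not linked with the self-dual $\chi_V\delta_i'\nu^{s_i}$ in a way that would force reducibility after transposition (by Theorem \ref{generic_reps_metaplectic}\eqref{GL_conditions}, the corresponding unitary $GL$-inductions are irreducible). This confirms that the explicit Langlands quotient in the statement is correctly identified with $\theta_0(\pi)$ and $\theta_{-2}(\pi)$ respectively.
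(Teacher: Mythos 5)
Your proposal takes essentially the same route as the paper: note that $l_{\chi_V}(\sigma)=0$ always holds for $\chi_V\neq 1$ by Theorem~4.1 of \cite{Atobe_Gan}, show $l_{\chi_V}(\pi)=0$ by the two-tower argument of Proposition~\ref{l(pi)} with the $\chi_V\nu^{1/2}$ exception handled by the irreducibility of $\gamma_\psi^{-1}\chi_V\nu^{1/2}\rtimes\sigma$ (Theorem~\ref{generic_reps_metaplectic}), and then read off $\theta_0(\pi)$ and $\theta_{-2}(\pi)$ from Lemma~\ref{cor5.3} using that $\Theta_0(\sigma)$ and $\Theta_{-2}(\sigma)$ are irreducible and tempered. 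One small correction: you have swapped the levels in the $l_{\chi_V}(\pi)=0$ argument. Following the proof of Proposition~\ref{l(pi)}, the inequality $l_{\chi_V}(\pi)\geqslant l_{\chi_V}(\sigma)$ is obtained on the going-up tower at $l=-l_{\chi_V}(\sigma)=0$ (where the exceptional shape $St_t\nu^{-t/2}$ has non-positive exponent and so never occurs), while $l_{\chi_V}(\pi)\leqslant l_{\chi_V}(\sigma)$ is obtained on the going-down tower at $l=l_{\chi_V}(\sigma)+2=2$; it is at $l=2$ that the exceptional shape $St_1\nu^{1/2}$ can match $\chi_V\delta_i'\nu^{s_i}=\chi_V\nu^{1/2}$, and it is there that the exponent swap $\chi_V\nu^{1/2}\leftrightarrow\chi_V\nu^{-1/2}$ is actually needed. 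The remaining content of your argument is correct as written.
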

As for the higher lifts, we can repeat the argument in the case when $\pi$ is irreducible standard representation almost verbatim. Indeed, for the  higher lifts on the level $-l$ one is interested how $\zeta(\frac{1}{2},\frac{l-1}{2})$ on the split tower and $\zeta(\frac{3}{2},\frac{l-1}{2})$ on the non--split tower are arranged among $\delta_1'\nu^{s_1},\dotsc,\delta_{k-1}'\nu^{s_{k-1}},\allowbreak \chi_VSt_{2s_k}\nu^{s_{k}}.$ Here the possible obstacles are, as we saw above, if  some of $\delta_i'\nu^{s_i}$ are of the certain form and necessarily coming from the trivial character of $F^*$ in the cuspidal support. This means that  $\chi_VSt_{2s_k}\nu^{s_{k}}$ cannot complicate things more; moreover, when observing interactions of $\zeta(\frac{1}{2},\frac{l-1}{2})$ with $\delta_i'\nu^{s_i}$ which can cause complications, we can use the same trick used in the  irreducible standard representation case. Namely, in that case $\gamma_{\psi}^{-1}\chi_V\delta_i'\nu^{s_i}\rtimes \sigma$ is irreducible, so we can change the exponent into $-s_i$ if it suits us. To conclude, we have
\begin{prop}
\label{non_trivial_charcter_higher}
Let $\chi_V$ be a non-trivial quadratic character of $F^*.$ Let 
\[\pi=L(\gamma_{\psi}^{-1}\chi_V\delta_1'\nu^{s_1},\gamma_{\psi}^{-1}\chi_V\delta_2'\nu^{s_2},\ldots,\gamma_{\psi}^{-1}\chi_V\delta_{k-1}'\nu^{s_{k-1}},\gamma_{\psi}^{-1}St_{2s_k}\nu^{s_{k}};\sigma)\]
 be a $\psi$--generic representation. 
\begin{enumerate}
\item On the split $\chi_V$--tower and $l>0$ the lifts are given by
 \[\theta_{-l}(\pi)=L(\delta_1'\nu^{s_1},\dotsc,\delta_{k-1}'\nu^{s_{k-1}},\chi_VSt_{2s_k}\nu^{s_{k}},\nu^{\frac{l-1}{2}}, \nu^{\frac{l-3}{2}}, \ldots, \nu^{\frac{1}{2}}; \theta_0(\sigma)).\]
 \item On the non-split $\chi_V$--tower and $l\ge 4$ the lifts are given by
 \[\theta_{-l}(\pi)=L(\delta_1'\nu^{s_1},\dotsc,\delta_{k-1}'\nu^{s_{k-1}},\chi_VSt_{2s_k}\nu^{s_{k}},\nu^{\frac{l-1}{2}}, \nu^{\frac{l-3}{2}}, \ldots, \nu^{\frac{3}{2}}; \theta_{-2}(\sigma)).\]
 \end{enumerate}
\end{prop}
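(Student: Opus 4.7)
The plan is to adapt the strategy of Section \ref{subs:higher}, Cases 2 and 3, to the $\chi_V$-twisted setting. The crucial starting observation, already noted in the paragraph preceding Proposition \ref{non_trivial_charcter_first}, is that when $\chi_V \neq 1$ Theorem 4.1 of \cite{Atobe_Gan} forces $l_{\chi_V}(\sigma) = 0$; hence $\Theta_0(\sigma)$ on the split tower and $\Theta_{-2}(\sigma)$ on the non-split tower are both irreducible and tempered. This mirrors exactly the situation treated in Section \ref{subs:higher}, Cases 1 and 2.

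First I would let $\Delta = \delta_1' \times \cdots \times \delta_r'$ be the tempered support of $\sigma$ and $\pi_{00}$ its classical part. Repeated application of Lemma \ref{cor5.3}, together with the obvious transfer of Lemma \ref{lem:main} (whose proof in \cite{bakic_generic} carries over verbatim), yields the epimorphism
\[
\chi_V\delta_1'\nu^{s_1} \times \cdots \times \chi_V\delta_{k-1}'\nu^{s_{k-1}} \times \chi_V St_{2s_k}\nu^{s_k} \times \Delta \rtimes \theta_{-l}(\pi_{00}) \twoheadrightarrow \theta_{-l}(\pi).
\]
Next I would invoke Proposition \ref{prop:lifts_temp_Mp} to express $\theta_{-l}(\pi_{00})$ as the unique irreducible quotient of $\zeta(\tfrac{1}{2},\tfrac{l-1}{2}) \rtimes \theta_0(\pi_{00})$ in the split case and of $\zeta(\tfrac{3}{2},\tfrac{l-1}{2}) \rtimes \theta_{-2}(\pi_{00})$ in the non-split case.

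I would then slide the $\zeta$-segment to the left past $\Delta$ using Lemma \ref{lem:zelevinsky}; because $\chi_V \neq 1$, the only $\delta_i'$ that can create linking issues are those supported on the trivial character, exactly as in Cases 2 and 3 of Section \ref{subs:higher}. This isolates an irreducible tempered subquotient $\tau$ of $\Delta \rtimes \theta_0(\pi_{00})$ (respectively $\Delta \rtimes \theta_{-2}(\pi_{00})$) satisfying
\[
\chi_V\delta_1'\nu^{s_1} \times \cdots \times \chi_V St_{2s_k}\nu^{s_k} \times \zeta \rtimes \tau \twoheadrightarrow \theta_{-l}(\pi),
\]
and the arguments of Lemmas \ref{lemma:really_subq} and \ref{lem:really_small_theta}, transported to this setting without change, pin $\tau$ down to $\theta_0(\sigma)$, respectively $\theta_{-2}(\sigma)$.

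The principal obstacle will be the final combinatorial step: reordering the inserted factors $\nu^{(l-1)/2}, \ldots, \nu^{1/2}$ (respectively $\nu^{(l-1)/2}, \ldots, \nu^{3/2}$) into a decreasing sequence among the $\chi_V\delta_i'\nu^{s_i}$ and $\chi_V St_{2s_k}\nu^{s_k}$ by means of Lemma \ref{lem:tricky_condition}. The factor $\chi_V St_{2s_k}\nu^{s_k}$ is harmless here, since its cuspidal support is twisted by the non-trivial $\chi_V$ and is therefore never linked with pure powers of $\nu$. The residual obstruction comes only from those $\delta_i'\nu^{s_i}$ whose cuspidal support contains the trivial character and which violate condition (i) of Lemma \ref{lem:tricky_condition}. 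For these I would exploit the fact that $\gamma_{\psi}^{-1}\chi_V\delta_i'\nu^{s_i} \rtimes \sigma$ is irreducible by Theorem \ref{generic_reps_metaplectic}, and flip the exponent $s_i$ to $-s_i$ to bypass the exceptional case of Lemma \ref{cor5.3}, precisely as in the proof of Proposition \ref{higher_lifts_l(sigma)=0_up}. Uniqueness of the Langlands parameter then identifies $\theta_{-l}(\pi)$ as stated in both parts of the proposition.
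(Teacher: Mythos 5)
Your proposal is correct and follows essentially the same approach as the paper, which in fact proves this proposition by a brief sketch referring the reader back to the arguments of Section \ref{subs:higher} and Proposition \ref{higher_lifts_l(sigma)=0_up}. Your expansion --- the observation that $l_{\chi_V}(\sigma)=0$ forces the analogue of Cases~1 and~2, the fact that $\chi_V St_{2s_k}\nu^{s_k}$ cannot link with powers of $\nu$ since $\chi_V\neq 1$, and the exponent-flip trick enabled by the irreducibility of $\gamma_\psi^{-1}\chi_V\delta_i'\nu^{s_i}\rtimes\sigma$ --- is exactly what the paper invokes.
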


\section*{Appendix A: Proof of Theorem \ref{standard_module_reducibility}.}
Before we begin the proof we remind the reader of our notation: if $\pi$ is an irreducible representation, we let $\phi_\pi$ denote its $L$-parameter.
\begin{proof}
\noindent We first prove reducibility. Recall that $a_0= \min\{a: a \text{ is even and }S_a \hookrightarrow \phi_\sigma\}$. If $4s_i < a_0$, then $\phi_{\Theta_0(\sigma)}$ (which is the same as $\phi_\sigma$) does not contain $S_{4s_i}$. Therefore, $\delta([\nu^{-(2s_i-\frac{1}{2})},\nu^{2s_i-\frac{1}{2}}]) \rtimes \Theta_0(\sigma)$ reduces. We now imitate the proof of Proposition \ref{prop_reducibility_1_2}.

We thus assume that $\sigma$ is square-integrable, so that the long intertwining operator
\[
\begin{split}
A(w_0,s): \gamma_{\psi}^{-1}\delta([\nu^{-(2s_i-\frac{1}{2})},&\nu^{2s_i-\frac{1}{2}}])\nu^s \rtimes \Theta_0(\sigma) \\ &\to \gamma_{\psi}^{-1}\delta([\nu^{-(2s_i-\frac{1}{2})},\nu^{2s_i-\frac{1}{2}}])\nu^{-s} \rtimes \Theta_0(\sigma)
\end{split}
\]
is holomorphic at $s=0$. As before, $A(w_0,s)$ is the restriction of an intertwining operator $B(w_0,s)$ which is composed of the following intertwining operators:
\[
\begin{split}
    \gamma_{\psi}^{-1}\delta([\nu^{\frac{1}{2}},&\nu^{(2s_i-\frac{1}{2})}])\nu^s \times   \gamma_{\psi}^{-1}\delta([\nu^{-(2s_i-\frac{1}{2})},\nu^{-\frac{1}{2}}])\nu^s \rtimes \Theta_0(\sigma)\\ &\xrightarrow{T_1(s)}
       \gamma_{\psi}^{-1}\delta([\nu^{\frac{1}{2}},\nu^{2s_i-\frac{1}{2}}])\nu^s \times   \gamma_{\psi}^{-1}\delta([\nu^{\frac{1}{2}},\nu^{2s_i-\frac{1}{2}}])\nu^{-s} \rtimes \Theta_0(\sigma)\\
       &\xrightarrow{T_2(s)} \gamma_{\psi}^{-1}\delta([\nu^{\frac{1}{2}},\nu^{2s_i-\frac{1}{2}}])\nu^{-s} \times \gamma_{\psi}^{-1}\delta([\nu^{\frac{1}{2}},\nu^{2s_i-\frac{1}{2}}])\nu^s \rtimes \Theta_0(\sigma) \\
       &\xrightarrow{T_3(s)}
       \gamma_{\psi}^{-1}\delta([\nu^{\frac{1}{2}},\nu^{2s_i-\frac{1}{2}}])\nu^{-s} \times   \gamma_{\psi}^{-1}\delta([\nu^{-(2s_i-\frac{1}{2})},\nu^{-\frac{1}{2}}])\nu^{-s} \rtimes \Theta_0(\sigma).
\end{split}
\]
Now assume $\gamma_{\psi}^{-1}\delta([\nu^{\frac{1}{2}},\nu^{2s_i-\frac{1}{2}}])\rtimes \Theta_0(\sigma)$ is irreducible. This implies that (after multiplying by a certain positive power of $s$, if necessary) the operators $T_1(s)$, $T_2(s)$ and $T_3(s)$ are all holomorphic isomorphisms. We thus have a $k \geqslant 0$ such that $\lim_{s \to 0} s^k B(w_0,s)$ is a holomorphic isomorphism. Furthermore, we know that $k$ is strictly positive since $T_2(s)$ has a pole at $s=0$. This implies that $A(w_0,s)$ has a pole for $s=0$, which contradicts the fact that $A(w_0,s)$ is holomorphic at $s=0$. Therefore, $\gamma_{\psi}^{-1}\delta([\nu^{\frac{1}{2}},\nu^{2s_i-\frac{1}{2}}])\rtimes \Theta_0(\sigma)$ must be reducible.

If $\sigma$ is tempered, but not in discrete series, we complete the proof just like in Prop.~\ref{prop_reducibility_1_2}.

\bigskip

\noindent We now turn to the proof of irreducibility when $4s_i=a_0$. First we need the following


\begin{claim}
Apart from the Langlands quotient, all the irreducible subquotients of $\gamma_\psi^{-1} St_{2s_i}\nu^{s_i} \rtimes \sigma$ are tempered.
\end{claim}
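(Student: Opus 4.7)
The proof follows the pattern of Lemma \ref{length_two}(1), via Tadi\'c's Jacquet-module formula together with the temperedness of $\sigma$. Suppose, for the sake of contradiction, that $\pi_1$ is an irreducible non-tempered subquotient of $\Pi := \gamma_{\psi}^{-1} St_{2s_i}\nu^{s_i}\rtimes \sigma$ which is distinct from the Langlands quotient of $\Pi$. By the Langlands classification for $\widetilde{Sp(2n,F)}$, there exist a self-dual cuspidal representation $\rho$ of some $GL(m,F)$, real numbers $l_1,l_2$ with $l_1+l_2+1\in\N$ and $l_1<l_2$ (so that the essentially square-integrable representation $\delta([\rho\nu^{-l_2},\rho\nu^{l_1}])$ has strictly negative central exponent $(l_1-l_2)/2$), and an irreducible representation $\pi_2$ of a smaller metaplectic group, such that
\[
\pi_1 \hookrightarrow \gamma_{\psi}^{-1}\delta([\rho\nu^{-l_2},\rho\nu^{l_1}])\rtimes \pi_2.
\]
Frobenius reciprocity then yields $\delta([\rho\nu^{-l_2},\rho\nu^{l_1}])\otimes \pi_2 \le \mu^*(\pi_1) \le \mu^*(\Pi)$.

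The next step is to expand $\mu^*(\Pi)$ using the metaplectic version of Tadi\'c's formula (Proposition 4.5 of \cite{Hanzer_Muic_metaplectic_Jacquet}). Writing $\mu^*(\sigma)=\sum \delta''\otimes \sigma''$ and $St_{2s_i}\nu^{s_i}=\delta([\nu^{1/2},\nu^{2s_i-1/2}])$, the contributions to $\mu^*(\Pi)$ are indexed by half-integers $-\tfrac{1}{2}\le p\le q\le 2s_i-\tfrac{1}{2}$ and take the form
\[
\delta([\nu^{-p},\nu^{-\frac{1}{2}}])\times \delta''\times \delta([\nu^{q+1},\nu^{2s_i-\frac{1}{2}}])\ \otimes\ \delta([\nu^{p+1},\nu^{q}])\rtimes \sigma'',
\]
with the convention that a segment with no entries is simply omitted. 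We seek summands whose irreducible decomposition contains $\delta([\rho\nu^{-l_2},\rho\nu^{l_1}])\otimes \pi_2$ and compare cuspidal supports; since discrete series representations of general linear groups have multiplicity-free cuspidal support, this yields strong restrictions.

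If $\rho\ncong 1_{GL(1,F)}$, the left and right outer segments must be empty, i.e.\ $p=-\tfrac{1}{2}$ and $q=2s_i-\tfrac{1}{2}$, and $\delta''$ must then be isomorphic to $\delta([\rho\nu^{-l_2},\rho\nu^{l_1}])$; but this essentially square-integrable representation has a strictly negative central exponent, violating Casselman's square-integrability criterion applied to the tempered $\sigma$. Hence $\rho$ is trivial. In the remaining case $\rho=1_{GL(1,F)}$, every possible configuration of $(p,q,\delta'',\sigma'')$ is ruled out by combining temperedness of $\sigma$ (which excludes any contribution of $\delta''$ carrying a factor with non-positive central exponent), the inequality $l_1<l_2$ (which forbids $[\nu^{-l_2},\nu^{l_1}]$ from being absorbed into the strictly positive right segment $[\nu^{q+1},\nu^{2s_i-\frac{1}{2}}]$), and the multiplicity-free bookkeeping of the cuspidal supports, except for the single configuration $p=q=2s_i-\tfrac{1}{2}$, $\delta''=1$, $\sigma''=\sigma$. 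This forces $(l_1,l_2)=(-\tfrac{1}{2},2s_i-\tfrac{1}{2})$ and $\pi_2=\sigma$, so that
\[
\pi_1 \hookrightarrow \gamma_{\psi}^{-1}\widetilde{St_{2s_i}\nu^{s_i}}\rtimes \sigma.
\]
Since the unique irreducible subrepresentation of the right-hand side is precisely the Langlands quotient of $\Pi$, we conclude $\pi_1 = L(\gamma_{\psi}^{-1}St_{2s_i}\nu^{s_i};\sigma)$, contradicting the assumption on $\pi_1$. The main technical obstacle is the careful bookkeeping of the many sub-cases arising in the $\rho=1_{GL(1,F)}$ analysis, each of which must be eliminated by combining the three constraints above; note that the condition $4s_i=a_0$ plays no role in this structural claim and will be invoked only when promoting it to the full irreducibility of $\Pi$.
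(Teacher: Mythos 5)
Your approach is a genuinely different route from the paper's: you try to redo everything via Tadi\'c's formula and cuspidal-support bookkeeping, extending the technique of Lemma \ref{length_two}(1) (which handles only $\nu^{1/2}\rtimes\pi$, i.e.\ rank one) to $\gamma_\psi^{-1}St_{2s_i}\nu^{s_i}\rtimes\sigma$ for general half-integral $s_i$. The paper instead splits into two steps: for $\sigma$ in discrete series it simply cites Corollary 2.1 of Mui\'c's \cite{M1}, and for $\sigma$ tempered it embeds $\sigma$ into its tempered support and factors the long intertwining operator $T=T_3\circ T_2\circ T_1$, deducing that $\ker T$ has only tempered subquotients because $T_1,T_3$ are isomorphisms and $\ker T_2$ inherits the discrete-series case. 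In principle your route could work and would be more self-contained, but as written it has gaps.

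The central problem is that your $\rho=1_{GL(1,F)}$ case is asserted, not argued. For rank one ($\nu^{1/2}\rtimes\pi$) the three sub-cases can be listed and dispatched in a few lines, as in Lemma \ref{length_two}; but for $St_{2s_i}\nu^{s_i}\rtimes\sigma$ the index set $-\tfrac{1}{2}\le p\le q\le 2s_i-\tfrac{1}{2}$ together with the possible $\delta''\otimes\sigma''$ is much larger, and the interaction of $[\nu^{-p},\nu^{-1/2}]$, $[\nu^{q+1},\nu^{2s_i-1/2}]$ and the factors of $\delta''$ with the putative segment $[\nu^{-l_2},\nu^{l_1}]$ is nontrivial. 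You invoke three constraints but do not run any cases; this is precisely where the work lies. Moreover, one of your three constraints is stated incorrectly: Casselman's temperedness criterion for $\sigma$ does \emph{not} exclude contributions $\delta''$ ``carrying a factor with non-positive central exponent.'' Segments such as $\delta([\nu^{-c},\nu^{c}])$ (central exponent $0$) do appear in $\mu^*(\sigma)$ for tempered $\sigma$ --- indeed the proof of Proposition \ref{prop_reducibility_1_2} and the whole discussion of tempered supports relies on this. What Casselman gives is only that partial sums of cuspidal exponents are $\ge 0$ along a standard embedding, a strictly weaker statement, and it is this weaker constraint that must be fed into the case analysis.

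Finally, your claim that the hypothesis $4s_i=a_0$ ``plays no role'' in the structural statement is unjustified and should be viewed with suspicion. In the paper's proof this hypothesis is used in an essential way: it guarantees that the segments $\delta_1',\dots,\delta_k'$ in the tempered support of $\sigma$ are not linked to $[\nu^{1/2},\nu^{2s_i-1/2}]$, hence that $T_1,T_3$ are isomorphisms. In your proof the same hypothesis would constrain which $\delta''\otimes\sigma''\le\mu^*(\sigma)$ can appear, and without it new configurations become available that your sketch does not rule out. Since the Claim is stated in the paper only within the case $4s_i=a_0$ of Theorem \ref{standard_module_reducibility}, you should not assert wider validity unless you actually verify it.
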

\begin{proof}
If $\sigma$ is in discrete series, this is Corollary 2.1 of \cite{M1}, which transfers easily to the metaplectic case. When $\sigma$ is tempered, we can embed it in its tempered support:
\[
 \sigma \hookrightarrow \gamma_\psi^{-1}\delta_1' \times \dotsm \times \gamma_\psi^{-1}\delta_k' \rtimes \sigma_0.
 \]
We then have
\begin{align*}
\gamma_\psi^{-1}\delta \times  \gamma_\psi^{-1}(\delta_1 \times \dotsm\times \delta_k) \rtimes \sigma_0 &\xrightarrow{T_1} \gamma_\psi^{-1}(\delta_1 \times \dotsm\times \delta_k) \times  \gamma_\psi^{-1}\delta\rtimes \sigma_0\\
&\xrightarrow{T_2} \gamma_\psi^{-1}(\delta_1 \times \dotsm\times \delta_k) \times  \gamma_\psi^{-1}\delta^\vee \rtimes \sigma_0 \\
&\xrightarrow{T_3} \gamma_\psi^{-1}\delta^\vee \times  \gamma_\psi^{-1}(\delta_1 \times \dotsm\times \delta_k) \rtimes \sigma_0.
\end{align*}
Restricting $T_3 \circ T_2 \circ T_1$ to the subrepresentation $\gamma_\psi^{-1}\delta \rtimes \sigma$ we get
\[
T:  \gamma_\psi^{-1}\delta \rtimes \sigma \to \gamma_\psi^{-1}\delta^\vee \rtimes \sigma.
\]
The image of $T$ is exactly the Langlands quotient of $\gamma_\psi^{-1}\delta \rtimes \sigma$. We want to show that the kernel consists only of tempered subquotients. We first note that $T_1$ and $T_3$ are isomorphisms: since $4s_i$ is minimal among $a$ such that $S_a \hookrightarrow \phi$, none of the segments which define $\delta_1, \dotsc, \delta_k$ are linked to the segment which defines $\delta$ (or $\delta^\vee$). On the other hand, $T_2$ need not be an isomorphism, but it is induced from $T_2': \gamma_\psi^{-1}\delta \rtimes \sigma_0 \to \gamma_\psi^{-1}\delta^\vee \rtimes \sigma_0$. By the part of this claim which concerns discrete series representations, $\ker T_2'$ only has tempered subquotients. From here we easily deduce the same for $T_2$. Thus, any subquotient of $\ker T$ is in fact contained in $\ker T_2$ and is therefore tempered. This proves the claim.
\end{proof}

\bigskip

\noindent We are now ready to prove irreducibility.
The $l=0$ lift of $\sigma$ to the split tower is $\Theta_0(\sigma)$. This is a generic representation of the orthogonal group, so we can use the results of \cite{Hanzer_injectivity}. By Propositions 4.7 and 5.1 of \cite{Hanzer_injectivity}, we know that there is a irreducible tempered generic representation $\sigma_1$ such that $\Theta_0(\sigma) \hookrightarrow \delta([\nu^{\frac{1}{2}},\nu^{2s_i-\frac{1}{2}}]) \rtimes \sigma_1$. Therefore, we have $\delta([\nu^{-(2s_i-\frac{1}{2})},\nu^{-\frac{1}{2}}]) \rtimes \sigma_1 \twoheadrightarrow \Theta_0(\sigma)$. We may now use a slight generalization of Lemma \ref{cor5.3} (cf.~Corollary 3.7 of \cite{bakic_generic}) to obtain either 
\[
\gamma_\psi^{-1}\delta([\nu^{-(2s_i-\frac{1}{2})},\nu^{-\frac{3}{2}}]) \rtimes \Theta_{-2}(\sigma_1) \twoheadrightarrow \sigma
\]
or
\[
\gamma_\psi^{-1}\delta([\nu^{-(2s_i-\frac{1}{2})},\nu^{-\frac{1}{2}}]) \rtimes \Theta_0(\sigma_1) \twoheadrightarrow \sigma.
\]
We show that the latter cannot be true. To this end, we use Theorem 4.1 (2) of \cite{Atobe_Gan}. It shows that we have $m^{\text{down}}(\sigma_{1{|SO}}) = m^\alpha(\sigma_{1{|SO}})$, with $\alpha = \eta(z_{\phi_{\sigma_1}})\cdot \epsilon({\phi_{\sigma_1}}) = \epsilon({\phi_{\sigma_1}})$ (recall that $\eta$ is trivial since $\sigma_1$ is generic). Similarly, $m^{\text{down}}(\Theta_0(\sigma)_{{|SO}}) = m^{\alpha'}(\Theta_0(\sigma)_{{|SO}})$, with $\alpha' = \epsilon({\phi_{\Theta_0(\sigma)_{{|SO}}}})$. Now it is easy to see that $\sigma_1$ and $\Theta_0(\sigma)$ have the same central character. On the other hand, one shows that $\epsilon({\phi_{\sigma_1}}) = -\epsilon({\phi_{\Theta_0(\sigma)_{{|SO}}}})$. Since $\Theta_0(\sigma)$ obviously has a non-zero lift on level $l=0$ (it is equal to $\sigma$), this implies that $\Theta_0(\sigma_1) = 0$ (and $\Theta_0(\sigma_1 \otimes \det) \neq 0$).

\noindent We have thus shown that
\[
\gamma_\psi^{-1}\delta([\nu^{-(2s_i-\frac{1}{2})},\nu^{-\frac{3}{2}}]) \rtimes \Theta_{-2}(\sigma_1) \twoheadrightarrow \sigma.
\]
Moreover, we now know that $\Theta_{-2}(\sigma_1)$ is the first non-zero lift of $\sigma_1$, so it is tempered by Theorem 4.5 (1) of \cite{Atobe_Gan}. It is also generic, as shown by the above map (and the hereditary property).

Now assume $\gamma_\psi^{-1}\delta([\nu^{\frac{1}{2}},\nu^{2s_i-\frac{1}{2}}]) \rtimes \sigma$ reduces. We have shown that any irreducible subquotient of $\gamma_\psi^{-1}\delta([\nu^{\frac{1}{2}},\nu^{2s_i-\frac{1}{2}}]) \rtimes \sigma$ which is not the Langlands quotient is necessarily tempered. Thus let $T$ be an irreducible tempered representation such that
\[
T \hookrightarrow \gamma_\psi^{-1}\delta([\nu^{\frac{1}{2}},\nu^{2s_i-\frac{1}{2}}]) \rtimes \sigma,
\]
By the above discussion, this implies
\begin{align*}
&T \hookrightarrow \gamma_\psi^{-1}\delta([\nu^{\frac{1}{2}},\nu^{2s_i-\frac{1}{2}}]) \times \gamma_\psi^{-1}\delta([\nu^{\frac{3}{2}},\nu^{2s_i-\frac{1}{2}}]) \rtimes \Theta_{-2}(\sigma_1)\\
\Rightarrow \quad  &T \hookrightarrow \gamma_\psi^{-1}\delta([\nu^{\frac{3}{2}},\nu^{2s_i-\frac{1}{2}}]) \times \gamma_\psi^{-1}\delta([\nu^{\frac{1}{2}},\nu^{2s_i-\frac{1}{2}}]) \rtimes \Theta_{-2}(\sigma_1)\\
\Rightarrow \quad &T \hookrightarrow \gamma_\psi^{-1}\delta([\nu^{\frac{3}{2}},\nu^{2s_i-\frac{1}{2}}]) \times \gamma_\psi^{-1}\delta([\nu^{\frac{3}{2}},\nu^{2s_i-\frac{1}{2}}]) \times  \gamma_\psi^{-1}\nu^\frac{1}{2}\rtimes \Theta_{-2}(\sigma_1).
\end{align*}
Now $\gamma_\psi^{-1}\nu^\frac{1}{2}\rtimes \Theta_{-2}(\sigma_1) = \gamma_\psi^{-1}\nu^{-\frac{1}{2}}\rtimes \Theta_{-2}(\sigma_1)$ by Proposition \ref{prop_reducibility_1_2}, so that
\begin{align*}
&T \hookrightarrow \gamma_\psi^{-1}\delta([\nu^{\frac{3}{2}},\nu^{2s_i-\frac{1}{2}}]) \times \gamma_\psi^{-1}\delta([\nu^{\frac{3}{2}},\nu^{2s_i-\frac{1}{2}}]) \times  \gamma_\psi^{-1}\nu^{-\frac{1}{2}}\rtimes \Theta_{-2}(\sigma_1)\\
\text{i.e.} \quad &T \hookrightarrow \gamma_\psi^{-1}\nu^{-\frac{1}{2}} \times \gamma_\psi^{-1}\delta([\nu^{\frac{3}{2}},\nu^{2s_i-\frac{1}{2}}]) \times \gamma_\psi^{-1}\delta([\nu^{\frac{3}{2}},\nu^{2s_i-\frac{1}{2}}]) \rtimes \Theta_{-2}(\sigma_1).
\end{align*}
However, this contradicts Casselman's criterion. We have thus shown that it is impossible for $\gamma_\psi^{-1}\delta([\nu^{\frac{1}{2}},\nu^{2s_i-\frac{1}{2}}]) \rtimes \sigma$ to reduce. This completes our proof.
\end{proof}

 \bibliographystyle{siam}
\bibliography{deg_eisen_Sp4_27_07}
{}
\bigskip
\end{document}